\numberwithin{equation}{section}
\title{Non-log liftable log del Pezzo surfaces of rank one in characteristic five}
\author{Masaru Nagaoka}
\email{masaru.nagaoka@gakushuin.ac.jp}
\address{Gakushuin University, 1-5-1 Mejiro, Toshima-ku, Tokyo 171-8588, Japan}
\def\phi{\varphi}
\def\epsilon{\varepsilon}
\def\mapsto{\longmapsto}
\def\log{\operatorname{log}}
\def\Spec{\operatorname{Spec}}
\def\Dyn{\operatorname{Dyn}}
\def\Sing{\operatorname{Sing}}
\newcommand{\Q}{\mathbb{Q}} 
\newcommand{\C}{\mathbb{C}} 
\newcommand{\Z}{\mathbb{Z}}
\renewcommand{\AA}{\mathbb{A}}
\newcommand{\PP}{\mathbb{P}}
\newcommand{\sO}{\mathcal{O}}
\newcommand{\mc}{\mathcal}
\newcommand{\wt}{\widetilde}
\theoremstyle{plain}
\newtheorem{thm}{Theorem}[section] 
\newtheorem{cor}[thm]{Corollary}
\newtheorem{prop}[thm]{Proposition}
\newtheorem{lem}[thm]{Lemma}
\theoremstyle{definition} 
\newtheorem{defn}[thm]{Definition}
\newtheorem{eg}[thm]{Example} 
\theoremstyle{remark}
\newtheorem{rem}[thm]{Remark}
\newtheorem{defn and notation}[thm]{Definition and Notation}
\newtheorem*{notation}{Notation}
\keywords{Log del Pezzo surfaces; Liftability to the ring of Witt vectors; Positive characteristic.}
\subjclass[2020]{Primary 14J26, 14D15; Secondary 14G17, 14J45}
\begin{document}
\tolerance = 9999

\begin{abstract}
    Building upon the classification by Lacini [arXiv:2005.14544], we determine the isomorphism classes of log del Pezzo surfaces of rank one over an algebraically closed field of characteristic five either which are not log liftable over the ring of Witt vectors or whose singularities are not feasible in characteristic zero.
    We also show that the Kawamata-Viehweg vanishing theorem for ample $\Z$-Weil divisors holds for log del Pezzo surfaces of rank one in characteristic five if those singularities  are feasible in characteristic zero.
\end{abstract}

\maketitle
\markboth{Masaru Nagaoka}{Non-log liftable log del Pezzo surfaces in characteristic five}

\tableofcontents

\section{Introduction}

In \cite{KN1}, Kawakami and the author investigated Du Val del Pezzo surfaces over an algebraically closed field $k$ of characteristic $p>0$.
As a result, we revealed when such a surface has several pathological properties such as non-existence of Du Val del Pezzo surfaces over the field of complex numbers $\C$ with the same Dynkin type and the same Picard rank, non-existence of smooth anti-canonical members, the failure of the Kawamata-Viehweg vanishing theorem for ample $\Z$-Weil divisors, and the non-log liftability over the ring of Witt vectors $W(k)$ (see Definition \ref{d-liftable}).

Around the same time, Lacini \cite{Lac} gave the classification of log del Pezzo surfaces of rank one over $k$ of characteristic $p >3$ following Keel and M\textsuperscript{c}Kernan's idea \cite{KM}.
As a result, he showed that every log del Pezzo surfaces of rank one over $k$ is log liftable over $W(k)$ when we add the assumption $p \neq 5$.

The aim of this paper is to investigate pathological phenomena as above on log del Pezzo surfaces of rank one over $k$ of characteristic $p=5$.
To this end, we define the following notation.
For the definition and terminology of Dynkin types, we refer to $\S \ref{subsec:Dynkin}$.

\begin{defn}\label{notation}
For a log del Pezzo surface $S$ of index $r$ over an algebraically closed field $k$ of characteristic $p>0$, we say that $S$ satisfies:
\begin{itemize}
    \item (ND) if there exists no log del Pezzo surface $S_{\C}$ over the field of complex numbers $\C$ with the same Dynkin type and the same Picard rank as $S$.
    \item (NB) if the members of $|-rK_S|$ are all singular. 
    \item (NK) if $H^1(S, \sO_S(-A)) \neq 0$ for some ample $\Z$-Weil divisor $A$ on $S$.
    \item (NL) if $S$ is not log liftable over $W(k)$.
\end{itemize}
\end{defn}

Note that $S_\C$ as in the condition (ND) must have the same anti-canonical volume as $S$ since log del Pezzo surfaces are rational.
Furthermore, compared to the case of Du Val del Pezzo surfaces \cite[Definition 1.2]{KN1}, the property (NB) does not express pathological features very often since, to the best of the author's knowledge, it is unknown whether $|-rK_S|$ has a smooth member even in characteristic zero.

The main results of this paper consist of two theorems.
One is Theorem \ref{thm:main}, which determines log del Pezzo surfaces of rank one satisfying (NL) or (ND) when $p=5$.

\begin{thm}\label{thm:main}
Let $S$ be a log del Pezzo surface of rank one over an algebraically closed field $k$ of characteristic five.
Then the following hold.
\begin{enumerate}
    \item[\textup{(1)}] $S$ is not log liftable over the ring of Witt vectors $W(k)$ if and only if $\Dyn (S) =2[3,2]+[3]+[2]+[2^4]$, $[4,2]+[3,2^5]+[3,2]+[2]$, $[2,4]+[2,3,2^2]+[3]+[2^4]$, or $2[2^4]+(\dagger)$, where $(\dagger)$ is listed in Table \ref{tab:dagger}.
    \item[\textup{(2)}] The isomorphism class of $S$ is uniquely determined by its Dynkin type except when $\Dyn (S)=2[2^4]+ [2^n]+[2+n; [2], [3], [5]]$ for some $n \geq 0$.
    In these cases, the isomorphism classes of del Pezzo surfaces correspond to the closed points of $\PP^1_k \setminus \{0, 1, \infty\}$ for each $n \geq 0$.
    \item[\textup{(3)}] $S$ is not log liftable over $W(k)$ but there exists a log del Pezzo surface of rank one over $\C$ with the same Dynkin type as $S$ if and only if $\Dyn (S) = 2[2^4]+[3]$ or $2[2^4]+[2, 4]$.
\end{enumerate}
\end{thm}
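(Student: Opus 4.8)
The plan is to build on Lacini's classification of log del Pezzo surfaces of rank one in characteristic $p>3$ \cite{Lac}, together with his observation that log liftability over $W(k)$ can fail only when $p=5$; the problem then becomes a finite (resp.\ finitely parametrized) case analysis over his list. For $S$ as in the statement let $\pi\colon\tilde S\to S$ be the minimal resolution, so that $\tilde S$ is a smooth rational surface carrying a birational morphism $f\colon\tilde S\to Z$ onto $\PP^2$ or a Hirzebruch surface, and $\Dyn(S)$ is encoded by $\Exc(\pi)$ inside this model. The guiding principle, exactly as in the companion study of Du Val del Pezzo surfaces \cite{KN1}, is that the sole source of non-liftability in characteristic five is a $2A_4$ sub-configuration, i.e.\ two disjoint chains $[2^4]$ of $(-2)$-curves, which is the configuration realized by the non-liftable Du Val del Pezzo surface of degree one in characteristic five (itself a log del Pezzo surface of rank one). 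I would first dispatch the easy half of (1): for every configuration on Lacini's list whose Dynkin type does not contain $2[2^4]$, one produces a lift of a log resolution — automatically when the singularities are feasible in characteristic zero, and by an explicit construction over $W(k)$ for the finitely many remaining cases.

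The substantive part of (1) is the converse together with the completeness of Table \ref{tab:dagger}. Assuming $2[2^4]\subseteq\Dyn(S)$, I would analyze $f\colon\tilde S\to Z$: the two $A_4$-chains force the centres of $f$ into the very special position coming from a pencil of curves on $Z$ with two $A_4$-type degenerations — over $\PP^2$ this is a pencil of plane cubics with two fibres of Kodaira type $I_5$ — and the additional curves recorded by $(\dagger)$ then correspond to a tightly constrained choice of further infinitely near points compatible with $S$ having Picard rank one; enumerating these yields exactly the entries of Table \ref{tab:dagger}. For the failure of lifting at each entry I would argue by specialization: a lift of a log resolution of $S$ would, after lifting the contraction of the curves lying outside the $2A_4$ locus, yield a lift of (a log resolution of) the $2A_4$ Du Val del Pezzo surface of degree one, contradicting \cite{KN1}. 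The delicate point is to verify that this contraction, and the boundary it induces, are well behaved in one-parameter families, so that the obstruction is genuinely inherited rather than merely analogous.

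For (2) I would revisit each Dynkin type appearing in Table \ref{tab:dagger} and check that, modulo $\Aut(Z)$, the centres of $f$ are rigid — hence $S$ is unique — in every case except $(\dagger)=[2^n]+[2+n;[2],[3],[5]]$. In that family the configuration forces four of the relevant points to lie on a single curve $C\cong\PP^1$ in $Z$ (a ruling or a section), and the residual modulus is precisely the cross-ratio of these four points; writing down the family explicitly and computing the resolution graph confirms that the central vertex of the star has self-intersection $-(2+n)$ for each $n\ge0$, and that two members are isomorphic if and only if their cross-ratios agree, which identifies the isomorphism classes with the closed points of $\PP^1_k\setminus\{0,1,\infty\}$.

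Statement (3) is then bookkeeping on top of (1): for each entry $2[2^4]+(\dagger)$ of Table \ref{tab:dagger} one asks whether some log del Pezzo surface of rank one over $\C$ has the same Dynkin type — equivalently, whether the corresponding dual graph is realizable over $\C$ with Picard rank one. For $(\dagger)=[3]$ and $(\dagger)=[2,4]$ I would exhibit such a surface by the analogous blow-up construction with a generic member of the relevant pencil in place of the characteristic-five degenerate one, while for every other entry the complex classification shows no such realization exists, which is exactly $(\mathrm{ND})$ there. The main obstacle throughout is the converse half of (1): proving that each configuration in Table \ref{tab:dagger} is genuinely non-liftable rather than merely resembling the known bad example. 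Because non-liftability is not formally inherited under the birational operations linking $S$ to the $2A_4$ Du Val del Pezzo surface of degree one, one needs a robust mechanism — a direct obstruction computation in the cotangent complex of the log pair, or a careful use of liftability of pairs — to transport the obstruction across the whole table; the enumeration certifying that Table \ref{tab:dagger} is exhaustive, while lengthy, is routine once Lacini's list is in hand.
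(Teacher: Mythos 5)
Your overall architecture (reduce everything to the $2[2^4]$ configuration, enumerate the possible extra curves, then transport an obstruction down to the degree-one Du Val del Pezzo surface) has the right shape, but it rests on a false premise that breaks the key step. You assert that the Du Val del Pezzo surface with Dynkin type $2[2^4]$ in characteristic five is itself non-log-liftable, and that a lift of $S$ would therefore contradict \cite{KN1} after contracting the curves outside the $2A_4$ locus. In fact that Du Val surface \emph{is} log liftable over $W(k)$: the paper invokes \cite[Theorem 1.7 (2)]{KN1} precisely in the form ``non-log-liftable $\Rightarrow$ not Gorenstein,'' so no Gorenstein example exists in characteristic five, and the bare configuration $2[2^4]$ carries no obstruction. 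The actual obstruction is arithmetic and lives in the \emph{pair}: a log lift of $S$ (say with $\Dyn(S)=2[2^4]+[3]$) produces, after contracting a lifted extremal ray via \cite{TY}, a $W(k)$-lift of the $2[2^4]$ surface \emph{together with its cuspidal anticanonical member}; on the generic fibre both nodal anticanonical members are then forced to be defined over $K=\operatorname{Frac}(W(k))$, and the resulting configuration of the four singular fibres of the associated elliptic fibration forces $\sqrt{5}\in K$, contradicting Eisenstein's criterion (Proposition \ref{prop:NL}). Without this or an equivalent mechanism your ``specialization'' step has nothing to contradict, and you yourself flag that you lack it.

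Two further gaps. First, your dispatch of the ``easy half'' of (1) via ``automatically when the singularities are feasible in characteristic zero'' is invalid: feasibility over $\C$ does not imply liftability --- the types $2[2^4]+[3]$ and $2[2^4]+[2,4]$ are feasible over $\C$ yet non-liftable, which is the entire content of part (3). The paper proves only the converse (liftable $\Rightarrow$ feasible, Lemma \ref{lem:NDtoNL}) and uses it contrapositively: every table entry other than $[3]$ and $[2,4]$ is shown infeasible over $\C$ by the Bogomolov bound, plus a separate hunt-step argument for $[2;[2],[3],[5]]$, which does not violate the bound directly and which your ``the complex classification shows no such realization exists'' glosses over. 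Second, the completeness of Table \ref{tab:dagger} (the ``only if'' of (1)) is obtained in the paper not by a case-check of Lacini's list but by the tiger/hunt-step machinery (Theorems \ref{Lac6.2} and \ref{Lac6.22} together with Lemma \ref{LacA.19}), which pins every non-liftable $S$ down to class LDP (b) over the pair consisting of the $2[2^4]$ surface and its cuspidal member; your route would require an independent liftability proof for every configuration not containing $2[2^4]$, which you do not supply.
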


\begin{table}[ht]
    \centering
\begin{align*}
    &[3], [2, 4], [2]+[3]+[5], [2^n]+[2+n; [2], [3], [5]], \\
    &[2^n, 3]+[3, 2+n, 5], [2^n, 4]+[2, 2+n, 5], [2^n, 6]+[2, 2+n, 3], \\
    &[4]+[2; [2], [3], [5]], [3, 2^{m-1}, 3]+[2+m; [2], [3], [5]], \\
    &[4+l]+[2; [2], [2^l], [5]], [2+l, 2^{m-1}, 4]+[2+m; [2], [2^l], [5]]\text{ with } 1 \leq l \leq 2, \\
    &[2, 5]+[2;[2], [3], [5]], [2, 3, 2^{m-1}, 4]+[2+m; [2], [3], [5]],\\
    &[6+l]+[2; [2], [3], [2^l]], [2+l, 2^{m-1}, 6]+[2+m;[2], [3], [2^l]] \text{ with } 1 \leq l \leq 4, \\ 
    &[2^l, 7]+[2; [2], [3], [2+l]], [2^l, 3, 2^{m-1}, 6]+[2+m; [2], [3], [2+l]] \text{ with } 1 \leq l \leq 3, \\
    &[3, 7]+[2; [2], [3], [3, 2]], [3, 3, 2^{m-1}, 6]+[2+m; [2], [3], [3, 2]], \\
    &[2, 8]+[2; [2], [3], [2, 3]], [2, 4, 2^{m-1}, 6]+[2+m; [2], [3], [2, 3]].
    \end{align*}
    \caption{The list of $(\dagger)$ where $n \geq 0$ and $m \geq 1$
    }
    \label{tab:dagger}
\end{table}

The other is Theorem \ref{thm:main2}, which shows that (NK) $\Rightarrow$ (ND) $\Rightarrow$ (NL) when the rank is one and $p=5$. 
The latter implication also holds in $p > 0$.

\begin{thm}\label{thm:main2}
Let $S$ be a log del Pezzo surface of rank one over an algebraically closed field $k$ of characteristic $p>0$.
Then the following hold.
\begin{enumerate}
\item[\textup{(1)}] If $S$ is log liftable over $W(k)$, then there exists a log del Pezzo surface of rank one over $\C$ with the same Dynkin type as $S$.
\item[\textup{(2)}] If $p=5$ and there exists a log del Pezzo surface of rank one over $\C$ with the same Dynkin type as $S$, then $H^1(S, \sO_S(-A))=0$ for every ample $\Z$-Weil divisor $A$.
\end{enumerate}
\end{thm}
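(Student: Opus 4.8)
The plan is to prove the two implications separately, relying on Lacini's classification \cite{Lac} and on the explicit list in Theorem \ref{thm:main}. For part (1), suppose $S$ is log liftable over $W(k)$, so there is a log resolution $f\colon Y\to S$ with $(Y,\Exc(f))$ lifting to a smooth formal scheme over $W(k)$ together with its boundary. First I would show that the anti-canonical model of a generic geometric fiber of such a lift is a log del Pezzo surface $S_\C$ over $\C$: negativity of the intersection form is preserved under specialization, the $f$-exceptional configuration has the same dual graph on the lift as on $S$ (so the contracted curves are again a disjoint union of the same ADE/cyclic-quotient chains), and contracting them yields a normal projective surface whose singularities are exactly the klt singularities realized by those graphs. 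Since $-K_S$ is ample with $\rho(S)=1$, the same numerical positivity passes to $S_\C$, whence $S_\C$ is log del Pezzo of rank one; by construction $\Dyn(S_\C)=\Dyn(S)$ and the Picard rank is preserved. The one subtlety here — and the step I expect to be the main obstacle — is ensuring that the resolution $Y$ and its boundary lift \emph{together} in a way that the contraction morphism also lifts; this is handled by the standard deformation-theoretic argument that $(-1)$- and more generally negative-definite configurations of rational curves on a smooth lifted surface can be blown down over $W(k)$ (cf. the Artin/Brieskorn-type simultaneous resolution statements), combined with the fact that the dual graph, hence the discrepancies, are locally constant in the family.

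For part (2), I would argue by contraposition using the classification. By Theorem \ref{thm:main}(1), if $p=5$ then $S$ is \emph{not} log liftable precisely when $\Dyn(S)=2[2^4]+(\dagger)$ with $(\dagger)$ in Table \ref{tab:dagger}. Composing with part (1) just proved, existence of a log del Pezzo surface of rank one over $\C$ with the same Dynkin type as $S$ forces $S$ itself to be log liftable over $W(k)$ — equivalently, $\Dyn(S)$ is \emph{not} of the excluded form — provided one also knows the converse direction, namely that none of the Dynkin types $2[2^4]+(\dagger)$ in Table \ref{tab:dagger} is realized by a rank-one log del Pezzo surface over $\C$. Actually part (1) already gives exactly the needed direction: log liftability implies the existence of $S_\C$, so its contrapositive says no $S_\C$ exists when $S$ is non-liftable. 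Hence, under the hypothesis of (2), $S$ is log liftable over $W(k)$; but log liftable log del Pezzo surfaces of rank one satisfy the Kawamata–Viehweg vanishing theorem for ample $\Z$-Weil divisors, by the standard lifting-plus-semicontinuity argument (lift $S$ and the ample divisor $A$ to characteristic zero, apply Kawamata–Viehweg there, and conclude $H^1(S,\sO_S(-A))=0$ by upper semicontinuity of cohomology along the family, using that the characteristic-zero $H^1$ vanishes and that $h^1$ can only jump under specialization).

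The remaining care in part (2) is purely at the level of divisors: one must lift not just the surface but the specific ample $\Z$-Weil divisor $A$, or equivalently its pullback to the resolution as a $\Q$-divisor, so that Kawamata–Viehweg for klt pairs applies on the characteristic-zero fiber; since $\Pic$ of the lifted resolution specializes isomorphically onto $\Pic(Y)$ (the configuration of curves being unchanged), every divisor class lifts, and ampleness is an open condition, so the lifted $A$ stays ample after shrinking. I would then invoke Kawamata–Viehweg vanishing in the form $H^1(S_\C,\sO_{S_\C}(-A_\C))=0$ valid for log del Pezzo surfaces over $\C$, and transport it back to characteristic five by flat base change and semicontinuity. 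The genuine mathematical content is therefore concentrated in part (1) and in the classification input from Theorem \ref{thm:main}; part (2) is the formal consequence "(liftability) $\Rightarrow$ (KVV)" together with the already-established equivalence "(feasible singularities in char.\ zero) $\Leftrightarrow$ (liftable)" in characteristic five.
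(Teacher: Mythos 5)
Your part (1) follows essentially the same route as the paper (lift the minimal resolution over $W(k)$, pass to a complex fiber, contract the exceptional configuration, and check rationality, Picard rank and ampleness of $-K$); the worry about lifting the contraction morphism itself over $W(k)$ is unnecessary, since one only needs to contract the negative-definite configuration on the complex fiber, which is exactly what the paper does via B\u{a}descu's contraction theorem.

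Part (2), however, contains a genuine logical gap. You deduce ``existence of $S_\C$ with the same Dynkin type $\Rightarrow$ $S$ is log liftable'' by claiming that ``part (1) already gives exactly the needed direction'' via its contrapositive. The contrapositive of (1) is ``no $S_\C$ exists $\Rightarrow$ $S$ is not log liftable''; what you need is the \emph{converse} of (1), and that converse is false in characteristic five. Theorem \ref{thm:main} (3) — together with Example \ref{eg:nonND} and Proposition \ref{prop:NL} — exhibits precisely two Dynkin types, $2[2^4]+[3]$ and $2[2^4]+[2,4]$, which are realized by rank-one log del Pezzo surfaces over $\C$ and yet are \emph{not} log liftable over $W(k)$. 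These two cases satisfy the hypothesis of (2) but escape your lifting-plus-semicontinuity argument entirely, and they are where all the work in the paper's proof of (2) lies: one first produces a smooth member of $|-rK_S|$ (Lemma \ref{lem:notNB}), which via the bound of Lemma \ref{lem:NKtoNB} reduces the vanishing to ample Weil divisors $A$ with $(-K_S\cdot A)$ very small, and then (for $2[2^4]+[2,4]$, where the index is $7$ and the bound is not automatic) one computes $H^1(S,\sO_S(-A))$ explicitly by Riemann--Roch on the minimal resolution, comparing with the Du Val surface of type $2[2^4]$. Your proposal handles only the log liftable cases, for which the reduction to characteristic zero is indeed standard, but omits the two exceptional types that constitute the actual content of statement (2).
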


We can prove the assertion (1) without Theorem \ref{thm:main}.
However, the assertion (2) follows from Theorem \ref{thm:main} and detailed observations of log del Pezzo surfaces with Dynkin type $2[2^4]+[3]$ or $2[2^4]+[2,4]$ so far.
Note that (NK) $\Rightarrow$ (ND) also holds for Du Val del Pezzo surfaces in any characteristic as a consequence of classifications \cite[Theorem 1.7]{KN1}.
What is still lacking is a proof of (NK) $\Rightarrow$ (ND) without using the classification of del Pezzo surfaces satisfying (NL).

This paper is structured as follows.
In Section \ref{sec:pre}, we set up terminology of klt singularities, recall hunt steps, and compile facts on Du Val del Pezzo surfaces with Dynkin type $2[2^4]$.
In Section \ref{sec:implication},  we give implications between pathological phenomena on log del Pezzo surfaces of rank one.
In particular, we prove Theorem \ref{thm:main2} (1).
In Section \ref{sec:notiger}, we construct non-log liftable log del Pezzo surfaces in Example \ref{eg:Lac4.8cex-1}
, which were erroneously omitted in \cite[\S 6.1]{Lac}
when we submitted this paper.
In Section \ref{sec:Lac}, we prove a refinement of \cite[Theorem 6.2]{Lac} (= Theorem \ref{Lac6.2}) and an alternative of \cite[Theorem 6.25]{Lac} (= Theorem \ref{Lac6.22}), which make it easy to seek a necessary condition for log del Pezzo surfaces of rank one to satisfy (NL).
In Section \ref{sec:main}, we prove Theorem \ref{thm:main} and Theorem \ref{thm:main2} (2) as follows.
We can calculate easily the list of Dynkin types of non-log liftable log del Pezzo surfaces of rank one by Example \ref{eg:Lac4.8cex-1} and Theorem \ref{Lac6.22}.
On the other hand, from the Bogomolov bound, we see that any Dynkin types in the list except $2[2^4]+[3], 2[2^4]+[2,4]$ and $2[2^4]+[2;[2], [3], [5]]$ are not feasible over $\C$.
Running hunt steps, we show that the last case is also not feasible over $\C$.
For the rest two cases, which are feasible over $\C$, we prove the non-log liftability by assuming the opposite.
For a log lift of such a log del Pezzo surface, we contract a certain extremal ray to get a log lift of the Du Val del Pezzo surface with Dynkin type $2[2^4]$.
Since all the singular anti-canonical member of this Du Val del Pezzo surface are defined over the fractional field of $W(k)$, we deduce that $\sqrt{5}$ must be contained in $W(k)$ to get a contradiction.
Thus we prove Theorem \ref{thm:main}.
To show Theorem \ref{thm:main2} (2), we calculate the first cohomology of anti-ample $\Z$-Weil divisors of small canonical degree explicitly.

\begin{notation}
We work over an algebraically closed field $k$ of characteristic $p > 0$ unless otherwise stated.
A \textit{variety} means an integral separated scheme of finite type over $k$.
A \textit{curve} (resp.\ a \text{surface}) means a variety of dimension one (resp.\ two).
A \textit{log pair} $(S, \Delta)$ is the pair of a normal projective variety $S$ and an effective $\Q$-Weil divisor $\Delta$ such that $K_S+\Delta$ is $\Q$-Cartier.
We say that a $\Q$-Weil divisor $\Delta$ is a \textit{boundary} if the irreducible decomposition $\Delta = \sum_i c_i D_i$ satisfies $0 \leq c_i \leq 1$.
We say that a normal surface is \textit{Du Val} if it has only canonical singularities. 
A \textit{log del Pezzo surface} is a projective surface with only klt singularities whose anti-canonical divisor is ample.
We often call the Picard rank of a log del Pezzo surface simply as the \textit{rank}.
Throughout this paper, we also use the following notation:
\begin{itemize}
\item $W(k)$ : the ring of Witt vectors of $k$.
\item $E_f$: the reduced exceptional divisor of a birational morphism $f$.
\item $S^\circ$: the smooth locus of a surface $S$.
\item $\Delta(\Gamma)$: the absolute value of the determinant of the intersection matrix of the weighted graph $\Gamma$.
\item $p_a(C)$: the arithmetic genus of a projective curve $C$.
\end{itemize}
\end{notation}

\section{Preliminaries}\label{sec:pre}

\subsection{Dynkin types}\label{subsec:Dynkin}

In this subsection, we set up notation and terminology of klt singularities of dimension two.

\begin{defn}
For integers $c, l_1, \ldots, l_i, m_1, \ldots, m_j, n_1, \ldots n_k \in \Z_{\geq 2}$, the symbol $[n_1, \ldots, n_k]$ (resp.\ $[c; [l_1, \ldots, l_i], [m_1, \ldots, m_j], [n_1, \ldots, n_k]]$) stands for the weighted dual graph as in Figure \ref{fig:chain} (resp.\ Figure \ref{fig:star}).
\end{defn}

\begin{figure}[ht] 
		\centering
		\begin{tikzpicture}
		
		\draw[] (0,0)circle(2pt);
		\draw[] (1,0)circle(2pt);
		\draw[] (2,0)circle(2pt);
		\node(a)at(3,0){$\cdots$};
		\draw[] (4,0)circle(2pt);
		
		\draw[thin ](0.2,0)--(0.8,0);
		\draw[thin ](1.2,0)--(1.8,0);
		\draw[thin ](2.2,0)--(2.6,0);
		\draw[thin ](3.4,0)--(3.8,0);

		\node(a)at(0,0.45){$-n_1$};
		\node(a)at(1,0.45){$-n_2$};
		\node(a)at(2,0.45){$-n_3$};
		\node(a)at(4,0.45){$-n_k$};

		\end{tikzpicture}
		\caption{The weighted graph $[n_1, n_2, \ldots, n_k]$.
		}\label{fig:chain}
		\centering
		\begin{tikzpicture}
		
		\draw[ ] (-3,0)circle(2pt);
		\node(a)at(-2,0){$\cdots$};
		\draw[] (-1,0)circle(2pt);
		\draw[] (0,0)circle(2pt);
		\draw[] (1,0)circle(2pt);
		\node(a)at(2,0){$\cdots$};
		\draw[] (3,0)circle(2pt);
		\draw[] (0,-1)circle(2pt);
		\node(a)at(0,-2){$\vdots$};
		\draw[] (0,-3)circle(2pt);
		
		\draw[thin ](-2.8,0)--(-2.4,0);
		\draw[thin ](-1.6,0)--(-1.2,0);
		\draw[thin ](-0.8,0)--(-0.2,0);
		\draw[thin ](0.2,0)--(0.8,0);
		\draw[thin ](1.2,0)--(1.6,0);
		\draw[thin ](2.4,0)--(2.8,0);
		\draw[thin ](0,-0.2)--(0,-0.8);
		\draw[thin ](0,-1.2)--(0,-1.8);
		\draw[thin ](0,-2.4)--(0,-2.8);

		\node(a)at(-3,0.45){$-n_k$};
		\node(a)at(-1,0.45){$-n_1$};
		\node(a)at(0,0.45){$-c$};
		\node(a)at(1,0.45){$-m_1$};
		\node(a)at(3,0.45){$-m_j$};
		\node(a)at(-0.45,-1){$-l_1$};
		\node(a)at(-0.45,-3){$-l_i$};

		\end{tikzpicture}
		\caption{The weighted graph $[c; [l_1, \ldots, l_i], [m_1, \ldots, m_j],$ $[n_1, \ldots, n_k]]$.
		}\label{fig:star}
	\end{figure}

\begin{defn}\label{Dynkin}
Let $S$ be a surface with klt singularities and $\sigma \colon \wt S \to S$ the minimal resolution.
The \textit{Dynkin type} of $S$, denoted by $\Dyn(S)$, is defined to be the weighted dual graph of the reduced $\sigma$-exceptional divisor.
\end{defn}

For example, we write $\Dyn(S)=2[2^4]+[2;[2],[3],[5]]$ if the weighted dual graph of the reduced $\sigma$-exceptional divisor consists of two connected components isomorphic to $[2^4]$ and one connected component isomorphic to $[2;[2], [3], [5]]$.
Recall that we can express the Dynkin type of each klt singularity by $[n_1, \ldots, n_k]$ or $[c; [l_1, \ldots, l_i], [m_1, \ldots, m_j], [n_1, \ldots, n_k]]$ (see \cite[\S 3]{Kol92} for more details).

\subsection{Hunt steps}

In this subsection, we recall certain Sarkisov links called hunt steps, which are used in \cite{KM} and \cite{Lac}.

\begin{defn}
Let $(X, \Delta)$ be a log pair. 
Let $f \colon Y \to X$ be a birational morphism with $Y$ normal and $\Gamma$ the log pullback of $\Delta$ via $f$.
For a prime divisor $E$ on $Y$, we define the \textit{coefficient} $e(E; X, \Delta)$ of $E$ with respect to the pair $(X, \Delta)$ to be the multiplicity of $\Gamma$ along $E$.
\end{defn}

\begin{defn}[{\cite[8.0.2 Definition]{KM}}]
Let $(X, \Delta)$ be a log pair with $\Delta$ a boundary and $\Delta = \sum_{i=1}^n a_i D_i$ the irreducible decomposition.
We say that $(X, \Delta)$ is \textit{flush} if $e(E; X, \Delta) < \min\{a_1, \ldots, a_n, 1\}$ for all exceptional divisors $E$ over $X$.
\end{defn}

\begin{defn}[{\cite[1.13 Definition]{KM} and \cite[Definitions 3.10 and 6.1]{Lac}}]
Let $(X, \Delta)$ be a log pair and $f \colon Y \to X$ be a birational morphism with $Y$ normal.
We say that $(X, \Delta)$ \textit{has a tiger} in $Y$ if there exists an effective $\Q$-Cartier divisor $\alpha$ on $X$ such that
\begin{enumerate}
    \item[\textup{(1)}] $K_X+\Delta+\alpha$ is numerically trivial and
    \item[\textup{(2)}] there exists a prime divisor $E$ on $Y$ such that $e(E; X, \Delta+\alpha) \geq 1$.
\end{enumerate}
Any such divisor $E$ is called a \textit{tiger} for $(X, \Delta)$.
A tiger $E$ for $(X, \Delta)$ is called \textit{exceptional} if $E$ does not lie in $X$.
When $\Delta = \emptyset$, we say that $X$ has a tiger in $Y$ for short.
\end{defn}

\begin{lem}[{\cite[8.2.5 Definition-Lemma]{KM} and \cite[Lemma 3.12]{Lac}}]\label{lem:Sarkisov}
Let $(S, \Delta)$ be a log pair such that $S$ is a log del Pezzo surface of rank one.
Let $f \colon T \to S$ be an extraction of relative Picard rank one of an irreducible divisor $E$ of the minimal resolution of $S$.
The Mori cone $\overline{\mathrm{NE}}(T)$ has two edges, one of which is generated by $E$.
Let $R$ be the other edge.
Let $x=f(E)$, $\Gamma$ the log pullback of $\Delta$, and $\Gamma_\epsilon = \Gamma + \epsilon E$, where $0 < \epsilon \ll 1$.
Assume that $-(K_S+\Delta)$ is ample.
Then the following hold.
\begin{enumerate}
    \item[\textup{(1)}] $R$ is $K_T$-negative and contractible.
    Hence there is a rational curve $\Sigma$ that generate the same ray.
    Let $\pi$ be the contraction morphism of $R$.
    \item[\textup{(2)}] $K_T+\Gamma_\epsilon$ is anti-ample.
    \item[\textup{(3)}] $\Gamma_\epsilon$ is $E$-negative.
    \item[\textup{(4)}] There is a unique rational number $\lambda$ such that with $\Gamma' = \lambda\Gamma_\epsilon$, $K_T+\Gamma'$ is $R$-trivial. 
    Moreover, $\lambda >1$.
    \item[\textup{(5)}] $K_T+\Gamma'$ is $E$-negative.
    \item[\textup{(6)}] $\pi$ is either a $\PP^1_k$-fibration (called \textit{``net''}) or birational.
    \item[\textup{(7)}] If $\pi \colon T \to S_1$ is birational, and $\Delta_1 = \pi_*\Gamma'$, then $K_{S_1}+\Delta_1$ is anti-ample and $S_1$ is a log del Pezzo surface of rank one.
\end{enumerate}
\end{lem}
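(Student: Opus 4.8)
The plan is to run the classical two-ray game of Keel--McKernan. First I would record the elementary inputs. Since $\rho(T)=\rho(S)+1=2$, the cone $\overline{\mathrm{NE}}(T)$ has exactly two extremal rays, one of which is $R_E:=\mathbb{R}_{\ge 0}[E]$ (because $f$ contracts only $E$ and $\rho(T/S)=1$), and $R$ is the other; $E^2<0$ by negativity of the $f$-exceptional locus; $K_T\cdot E=a(E;S)\,E^2\ge 0$, since $E$ is a prime divisor of the \emph{minimal} resolution of $S$ and hence has discrepancy $a(E;S)\le 0$; the $\mathbb{Q}$-divisor $L:=-f^*(K_S+\Delta)$ is nef and big with $L\cdot C=0$ precisely for $[C]\in R_E$, because $-(K_S+\Delta)$ is ample and $\rho(S)=1$; and the log pullback $\Gamma=f^{-1}_*\Delta+e(E;S,\Delta)\,E$ has $e(E;S,\Delta)\ge 0$ (as $\Delta\ge 0$), so $\Gamma$ and $\Gamma_\epsilon$ are effective and nonzero. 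I also use that $T$ has klt, hence $\mathbb{Q}$-factorial, singularities. Fix $\Sigma$ spanning $R$ (a priori just a numerical class; part (1) will produce an actual rational curve).

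I would then prove the parts in the order (3), (2), a positivity claim, (4), (1), (5). \textbf{(3):} from $K_T+\Gamma=f^*(K_S+\Delta)$ and the projection formula $(K_T+\Gamma)\cdot E=0$, so $\Gamma\cdot E=-K_T\cdot E\le 0$ and $\Gamma_\epsilon\cdot E=\Gamma\cdot E+\epsilon E^2<0$. \textbf{(2):} $-(K_T+\Gamma_\epsilon)=L-\epsilon E$ is positive on $R_E$ (as $L\cdot E=0$ and $-\epsilon E^2>0$) and, for $0<\epsilon\ll 1$, on $R$ (as $L\cdot\Sigma>0$, $\Sigma$ not being $f$-contracted, and $E\cdot\Sigma\ge 0$), hence ample by Kleiman's criterion. \textbf{Positivity:} $\Gamma_\epsilon\cdot\Sigma>0$. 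It is $\ge 0$, since $\Gamma_\epsilon$ effective and nonzero cannot be anti-ample while $\Gamma_\epsilon\cdot E<0$ by (3); and $\Gamma_\epsilon\cdot\Sigma=0$ is impossible: writing $[\Gamma_\epsilon]=\alpha[E]+\beta[\Sigma]$ with $\alpha,\beta\ge 0$ not both zero and using that the intersection form on $N^1(T)$ is non-degenerate of signature $(1,1)$ --- which forces $E\cdot\Sigma>0$ and $\Sigma^2\le 0$ (an extremal ray cannot be spanned by a big class) and gives $(E\cdot\Sigma)^2>E^2\Sigma^2$ --- one computes $\Gamma_\epsilon\cdot E\ge 0$, contradicting (3). \textbf{(4):} $(K_T+\lambda\Gamma_\epsilon)\cdot\Sigma=0$ then has the unique solution $\lambda=-(K_T\cdot\Sigma)/(\Gamma_\epsilon\cdot\Sigma)\in\mathbb{Q}$, and $\lambda>1$ because this is equivalent to $(K_T+\Gamma_\epsilon)\cdot\Sigma<0$, i.e.\ to (2). \textbf{(1):} from $\lambda>1$ and $\Gamma_\epsilon\cdot\Sigma>0$ we get $K_T\cdot\Sigma=-\lambda\,\Gamma_\epsilon\cdot\Sigma<0$, so $R$ is $K_T$-negative; the cone theorem on the klt surface $T$ then yields the contraction $\pi$ and a rational curve spanning $R$. \textbf{(5):} $(K_T+\Gamma')\cdot E=(1-\lambda)(K_T\cdot E)+\lambda(K_T+\Gamma_\epsilon)\cdot E=(1-\lambda)(K_T\cdot E)+\lambda\epsilon E^2<0$, using $\lambda>1$, $K_T\cdot E\ge 0$ and $E^2<0$.

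For \textbf{(6)}, $\pi\colon T\to Z$ is an extremal contraction of a surface, so $\dim Z\in\{1,2\}$ (if $\dim Z=0$ then $\overline{\mathrm{NE}}(T)=R$, i.e.\ $\rho(T)=1$); when $\dim Z=1$ a general fibre $F$ has $F^2=0$ and $K_T\cdot F<0$ by (1), so $2p_a(F)-2=K_T\cdot F+F^2<0$, whence $p_a(F)=0$ and $F\cong\mathbb{P}^1_k$, a net; when $\dim Z=2$ it is birational. For \textbf{(7)}, $\rho(S_1)=\rho(T)-1=1$, and since $\pi$ contracts a $K_T$-negative extremal ray out of the klt surface $T$ (a single MMP step) $S_1$ is klt, hence $\mathbb{Q}$-factorial, so $(S_1,\Delta_1)$ is a log pair. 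As $R$ is $(K_T+\Gamma')$-trivial, $K_T+\Gamma'=\pi^*(K_{S_1}+\Delta_1)$; intersecting with $E$, which $\pi$ does not contract, and using (5) gives $(K_{S_1}+\Delta_1)\cdot\pi_*E=(K_T+\Gamma')\cdot E<0$. Since $\pi_*E$ is a nonzero effective curve on the rank-one surface $S_1$ it is ample, so $-(K_{S_1}+\Delta_1)$ is ample, and then $-K_{S_1}=-(K_{S_1}+\Delta_1)+\Delta_1$ is ample as well; thus $S_1$ is a log del Pezzo surface of rank one.

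The step I expect to be the main obstacle is the interlocking of (1) and (4): one cannot read off the $K_T$-negativity of $R$ directly from the anti-ampleness of $K_T+\Gamma_\epsilon$, because $\Gamma_\epsilon$ need not be nef, so the argument must route through the explicit $\lambda$, whose existence rests on the genuinely non-formal inequality $\Gamma_\epsilon\cdot\Sigma>0$. Pinning that inequality down --- via the Hodge index theorem, the effectivity of $\Gamma_\epsilon$, and part (3) --- is where I would spend the most care. A secondary subtlety is (7): one must check that contracting a negative extremal ray preserves klt-ness (hence $\mathbb{Q}$-factoriality and rank one), for which it helps to test negativity against $K_T$ itself, which is available only once (1) is proved.
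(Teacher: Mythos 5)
Your proof is correct, and it is essentially the standard two-ray-game argument of Keel--McKernan that the paper itself does not reprove but simply cites from \cite[8.2.5 Definition-Lemma]{KM} and \cite[Lemma 3.12]{Lac}: the key points (the sign of $K_T\cdot E$ from the minimal-resolution discrepancy, the Hodge-index/effectivity argument forcing $\Gamma_\epsilon\cdot\Sigma>0$, and reading off $K_T$-negativity of $R$ from $\lambda>1$) all match the source. The only cosmetic caveat is that the rationality of $\lambda$ and the inequality $E\cdot\Sigma\ge 0$ are cleanest once one already knows $R$ is spanned by an actual irreducible curve, so the logical order is really (2), positivity, (1), then (4), as your own closing remark acknowledges.
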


\begin{defn}[{\cite[8.2.8 Definition-Remark]{KM} and \cite[Definition 3.13]{Lac}}]
We call the transformation $(f, \pi)$ as in Lemma \ref{lem:Sarkisov} \textit{a hunt step} for $(S, \Delta)$ if $e(E; S, \Delta)$ is maximal among exceptional divisors of the minimal resolution of $S$.
If $x$ is a chain singularity (resp.\ a non-chain singularity) and $\Delta = \emptyset$, we require $E$ to not be a $(-2)$-curve (resp.\ to be the central curve). 
This is always possible by \cite[8.3.9 Lemma and 10.11 Lemma]{KM}.
\end{defn}

Hunt steps often preserve the flush condition. For example, we have:

\begin{lem}[{\cite[8.4.5 Lemma]{KM}}]\label{lem:flush}
We follow the notation of Lemma \ref{lem:Sarkisov}. 
In each hunt step with $\Delta = \emptyset$, if $\lfloor \Gamma' \rfloor=0$, then $K_T+\Gamma'$ is plt.
If $T$ is not a net in addition, then $K_{S_1}+\Delta_1$ is flush.
\end{lem}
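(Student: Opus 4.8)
The plan is to follow \cite[8.4.5 Lemma]{KM}, keeping careful track of the numerical data. Since $\Delta=\emptyset$ and $f$ extracts the single prime divisor $E$, the log pullback $\Gamma=f^{*}K_{S}-K_{T}$ is $f$-exceptional, hence supported on $E$; write $\Gamma=eE$, where $e=e(E;S,\emptyset)$. As $S$ is klt, $e\in[0,1)$, and $e=0$ exactly when $E$ is a $(-2)$-curve. Thus $\Gamma_{\epsilon}=(e+\epsilon)E$ and $\Gamma'=aE$ with $a:=\lambda(e+\epsilon)$; the hypothesis $\lfloor\Gamma'\rfloor=0$ is precisely $a<1$, and $K_{T}+\Gamma'=f^{*}K_{S}+(a-e)E$ with $a-e>0$ because $\lambda>1$.

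For the plt assertion, since $\lfloor\Gamma'\rfloor=0$ it suffices to show that $(T,\Gamma')$ is klt. I would factor the minimal resolution $\sigma\colon\wt S\to S$ as $\wt S\xrightarrow{g}T\xrightarrow{f}S$, where $g$ contracts every $\sigma$-exceptional curve other than $E$, and write $K_{\wt S}+\wt\Gamma'=g^{*}(K_{T}+\Gamma')$, so that $\wt\Gamma'=\wt\Gamma+(a-e)\,g^{*}E$ with $\wt\Gamma=\sigma^{*}K_{S}-K_{\wt S}$ of coefficients $<1$ and $g^{*}E=\wt E+\sum_{i}m_{i}C_{i}$, the $m_{i}\ge 0$ being determined by $g^{*}E\cdot C_{j}=0$ and negative definiteness of the exceptional configuration. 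Then $(T,\Gamma')$ is klt if and only if every coefficient of $\wt\Gamma'$ is $<1$: along $\wt E$ it equals $a<1$, and for the $C_{i}$ I would invoke that the normalization imposed in the definition of a hunt step — namely that $E$ is not a $(-2)$-curve at a chain singularity, and $E$ is the central curve at a non-chain singularity — forces $(T,E)$ to be log canonical, so that $(T,cE)$ is klt for every $c<1$ and in particular for $c=a$. This is \cite[8.3.9 Lemma]{KM} and its star-shaped counterpart.

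For the flush assertion, assume $T$ is not a net, so that $\pi\colon T\to S_{1}$ is birational. By Lemma~\ref{lem:Sarkisov}(4), $K_{T}+\Gamma'$ is $R$-trivial, hence $K_{T}+\Gamma'=\pi^{*}(K_{S_{1}}+\Delta_{1})$ is a crepant pullback; by Lemma~\ref{lem:Sarkisov}(5), $E$ is not contracted by $\pi$, so $\Delta_{1}=\pi_{*}\Gamma'=a\,\pi(E)$ is irreducible with coefficient $a<1$, and $(S_{1},\Delta_{1})$ is plt by the previous paragraph. Flushness then amounts to $e(F;S_{1},\Delta_{1})<a$ for every exceptional divisor $F$ over $S_{1}$, which I would check in three cases. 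If $F$ is $\pi$-exceptional, then $e(F;S_{1},\Delta_{1})=\operatorname{mult}_{F}\Gamma'=0<a$ since $F\neq E$. If $F$ is exceptional over $T$ with center not contained in $E$, then $e(F;S_{1},\Delta_{1})=e(F;T,\Gamma')=e(F;T,\emptyset)=e(F;S,\emptyset)\le e<a$, the middle equalities because $f$ is an isomorphism near the center of $F$ and $\Gamma'$ is supported on $E$, and the inequality $e(F;S,\emptyset)\le e$ because a hunt step extracts an exceptional divisor of the minimal resolution of maximal coefficient, while the coefficient of any divisor above a klt surface point is bounded by those occurring on the minimal resolution. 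Finally, if $F$ lies over $E$, then $e(F;S_{1},\Delta_{1})=e(F;T,\Gamma')=e(F;T,\emptyset)+a\,\nu_{F}$ with $\nu_{F}>0$ the multiplicity of $F$ in the pullback of $E$, and one reduces to $e(F;T,\emptyset)<a(1-\nu_{F})$: when $\nu_{F}\ge 1$ this follows from $e(F;T,E)<1$ (log canonicity of $(T,E)$) and $a<1$, and when $\nu_{F}<1$ from bounding $e(F;T,\emptyset)$ in terms of the maximal coefficient $e$ using $e<1$, as carried out in \cite[\S 8.3 and \S 8.4]{KM}.

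The main obstacle is this last case of the flush assertion: controlling $e(F;T,\emptyset)$ and the multiplicity $\nu_{F}$ simultaneously for divisors $F$ lying over the extracted point. This needs the fine numerical structure of klt surface singularities together with the precise normalization built into the hunt step — the central-curve and non-$(-2)$-curve requirements — exactly as in \cite[8.3.9 Lemma and \S 8.4]{KM}.
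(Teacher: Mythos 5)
The paper offers no proof of this lemma: it is quoted verbatim from Keel--McKernan as \cite[8.4.5 Lemma]{KM}. Your write-up is a correct reconstruction of that argument, and I have only two small corrections. First, the parenthetical claim that $e=0$ exactly when $E$ is a $(-2)$-curve is false (for the chain $[2,3]$ the $(-2)$-curve has coefficient $1/5$); it is unused, so this is harmless. Second, in the last case (a divisor $F$ over $E$ with $\nu_F<1$), the inequality you need is $e(F;T,\emptyset)<a(1-\nu_F)$, and the operative input is $e<a$ (which holds because $a=\lambda(e+\epsilon)$ with $\lambda>1$), not $e<1$: indeed $e(F;T,\emptyset)=e(F;S,\emptyset)-e\,\nu_F\le e(1-\nu_F)<a(1-\nu_F)$, using the maximality of $e$ among coefficients on the minimal resolution together with the standard fact that coefficients of higher divisors over a klt surface point are dominated by those on the minimal resolution. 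With that substitution every case closes, and your treatment of $\nu_F\ge 1$ via pltness of $(T,E)$ and of the $\pi$-exceptional and off-$E$ divisors is exactly as in \cite[\S 8.3--8.4]{KM}.
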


On the other hand, the flush condition controls the singularity of pairs as follows.

\begin{lem}[{\cite[8.0.4 Lemma]{KM}}]\label{lem:singflush}
Let $(S, \Delta)$ be a flush log pair such that $S$ is a surface.
Let $A=\lceil \Delta \rceil$.
Then $(S, A)$ is plt at each singular point of $S$.
\end{lem}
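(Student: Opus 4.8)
The plan is to reduce to a local statement at a singular point of $S$ lying on $\Supp\Delta$ and there to analyse the minimal resolution.

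\textbf{Reduction.} Flushness gives $e(E;S,\Delta)<1$ for every exceptional $E$ over $S$, so $(S,\Delta)$ is klt; hence $S$ has only klt surface singularities, which are $\Q$-factorial, so every Weil divisor on $S$ is $\Q$-Cartier and $(S,A)$ is a log pair. As $\Delta$ is a boundary, $A=\lceil\Delta\rceil=\sum_{a_i>0}D_i$ is the reduced divisor supported on $\Supp\Delta$, and $A-\Delta=\sum_i(1-a_i)D_i$ is an effective $\Q$-Cartier divisor. At points off $\Supp\Delta$ the pair $(S,A)$ equals $(S,\emptyset)$, which is klt, hence plt; so it is enough to prove $(S,A)$ is plt at each singular point $x\in\Supp\Delta$. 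Fix such an $x$, let $\sigma\colon\wt S\to S$ be the minimal resolution over a neighbourhood of $x$ with exceptional curves $E_1,\dots,E_r$, and set $c_j=e(E_j;S,A)$ and $a=\min\{a_1,\dots,a_n,1\}$, so that $aA\le\Delta$ coefficientwise.

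\textbf{Bounding discrepancies.} I would first show $e(E;S,A)<1$ for every exceptional $E$ over $x$ with $a(E;S,0)\le0$, which covers every $E_j$. For any exceptional $E$ over $S$ the map $t\mapsto e(E;S,tA)$ is affine, with value $-a(E;S,0)$ at $t=0$; monotonicity of $e(E;S,-)$ in the ($\Q$-Cartier) boundary and flushness give $e(E;S,aA)\le e(E;S,\Delta)<a$, so solving the affine relation (and using $0<a\le1$) gives
\[
  e(E;S,A)\ <\ 1+\tfrac{1-a}{a}\,a(E;S,0)\ \le\ 1
\]
when $a(E;S,0)\le 0$. In particular $c_j<1$ for every $j$.

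\textbf{The configuration on $\wt S$.} Write $\sigma^*(K_S+A)=K_{\wt S}+\wt A+\sum_jc_jE_j$; thus $c_j=-a(E_j;S,0)+\operatorname{ord}_{E_j}\sigma^*A$, and the vector $(\operatorname{ord}_{E_j}\sigma^*A)_j$ is obtained from $(\wt A\cdot E_j)_j$ by applying minus the inverse of the negative definite intersection matrix of $\Exc(\sigma)$, an entrywise non-negative matrix. Since $\wt S$ is smooth, all $c_j<1$, and $\Exc(\sigma)$ is itself simple normal crossing (a chain or a three-leg star, as for every klt surface singularity), it suffices to verify that $\wt A+\Exc(\sigma)$ is simple normal crossing at every point of $\Exc(\sigma)$: then $(\wt S,\wt A+\sum_jc_jE_j)$ is log smooth near $\Exc(\sigma)$ with round-down $\wt A$ a disjoint union of smooth curves, hence sub-plt there, and $(S,A)$ is plt at $x$. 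If instead it fails at some $p\in\Exc(\sigma)$, then some $\wt{D_i}$ is singular at $p$, or tangent to some $E_j$, or two strict transforms meet at $p$, or some $\wt{D_i}$ passes through a node $E_j\cap E_k$; in each case $\wt A\cdot E_j\ge 2$ for some $j$, or $\wt A\cdot E_j\ge 1$ and $\wt A\cdot E_k\ge 1$ for adjacent $j,k$. Combining this with the displayed formula for $c_j$ and the diagonal/near-diagonal estimates on the inverse intersection matrix that hold for minimal resolutions of klt surface singularities, one gets $c_j\ge 1$ for such a $j$, contradicting the previous step. Hence the configuration is simple normal crossing, and $(S,A)$ is plt at $x$.

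\textbf{Main obstacle.} The scaling estimate only controls exceptional divisors of nonpositive discrepancy over $x$, so the heart of the proof is the last step: one has to know that any bad position of $\wt A$ along $\Exc(\sigma)$ is already detected on the minimal resolution (forcing some $c_j\ge1$), and this rests on the combinatorics of klt surface-singularity graphs rather than on flushness alone.
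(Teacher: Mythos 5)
The paper does not prove this lemma --- it is quoted verbatim from \cite[8.0.4 Lemma]{KM} --- so there is no internal proof to compare against; I am judging your argument on its own. Your reduction and your scaling estimate are correct and complete: flushness plus the affine dependence of $t\mapsto e(E;S,tA)$ on $t$, together with $aA\le\Delta$ for $a=\min\{a_i,1\}$, does give $e(E;S,A)<1+\tfrac{1-a}{a}\,a(E;S,0)\le 1$ whenever $a(E;S,0)\le 0$, hence $c_j<1$ for every component $E_j$ of the minimal resolution. Your closing logic is also valid in shape: if $\wt A+\Exc(\sigma)$ is simple normal crossing with the strict transforms of the components of $A$ pairwise disjoint near $\Exc(\sigma)$, then log smoothness plus $c_j<1$ plus coefficient $1$ on $\wt A$ does yield pltness at $x$ by the standard blow-up recursion.

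The genuine gap is the step you yourself flag as the ``main obstacle'': the implication that any failure of simple normal crossings along $\Exc(\sigma)$ (equivalently, $(\wt A\cdot E_j)\ge 2$ for some $j$, or $(\wt A\cdot E_j),(\wt A\cdot E_k)\ge 1$ for adjacent $j,k$) forces some $c_j\ge 1$. You assert this by appeal to unspecified ``diagonal/near-diagonal estimates on the inverse intersection matrix,'' but no such estimate on $-M^{-1}$ alone can work: for the singularity $[n]$ one has $(-M^{-1})_{11}=1/n$, so $(\wt A\cdot E_1)=2$ contributes only $2/n$ to $c_1$, and the conclusion $c_1\ge 1$ holds only after adding the coefficient $e(E_1;S,0)=1-2/n$ coming from the canonical part. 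In other words the needed inequality is $2(-M^{-1})_{jj}\ge f_j$, where $f_j$ is the log discrepancy of $E_j$ for $(S,0)$, and verifying it (together with the adjacent-pair case) requires exactly the case-by-case dual-graph computation via Kollár's formula \cite[(3.1.10)]{Kol92} that this paper carries out in the proof of Lemma \ref{LacA.19} (Cases 1a, 1b and 2a--2e): there one finds, e.g., $f_1 = \frac{\Delta(\Gamma_1)\Delta(\Gamma_2)}{\Delta(\Gamma)}\bigl(\frac{1}{\Delta(\Gamma_1)}+\frac{1}{\Delta(\Gamma_2)}-a_1\bigr)\le 0$ once $a_1\ge 2$, and $f_1+f_2\le 0$ in the adjacent case, which is precisely the statement $c_j\ge 1$ you need. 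So the claim is true and your architecture is the right one, but as written the proof is incomplete at its decisive point; importing the computations of Lemma \ref{LacA.19} (with strict inequalities) would close it.
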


\subsection{Du Val del Pezzo surface with Dynkin type $2[2^4]$}\label{subsec:2A4}

In this subsection, we compile facts on Du Val del Pezzo surfaces with Dynkin type $2[2^4]$ in characteristic $p \geq 0$.
Note that such a surface is unique up to isomorphism in each characteristic by \cite{Ye} and \cite{KN2}.

\begin{eg}\label{eg:2A4}
Let $k$ be an algebraically closed field of characteristic $p \geq 0$.
Let us recall the construction of the Du Val del Pezzo surface with Dynkin type $2[2^4]$ as in \cite[\S 5.1]{ABL} (see also \cite{Bea} and \cite[Lemma B.12]{Lac}).

Consider the following four points in $\PP^2_k$
\begin{align*}
    a =[-1:1:1], b=[-1:-1:1], c=[1:-1:1] \text{ and } d=[1:1:1].
\end{align*}
Let $L_{ab}$, $L_{ac}$, $L_{ad}$, $L_{bc}$, $L_{bd}$, and $L_{cd}$ be the six lines passing through two of them.
Consider the cubic curves $C_0=L_{ad}+L_{ac}+L_{bc}$ and $C_\infty = L_{ab}+L_{bd}+L_{cd}$.
Then the base locus of the pencil $C_t = C_0+tC_\infty$ consists of points $a, b, c, d$ with multiplicity two, and a point $[0:0:1]$ with multiplicity one.
Take $U_1$ as the blow-up of $\PP^2_k$ at $a, b, c,$ and $d$.
Let $E_a, \ldots, E_d$ be the exceptional divisor over $a, \ldots, d$ respectively.
Next take $U_2$ as the blow-up of $U_1$ at $E_a \cap L_{ab}, E_b \cap L_{bc}, E_c \cap L_{cd}$, and $E_d \cap L_{ad}$.
Let $F_a, \ldots, F_d$ be the exceptional divisor over $a, \ldots, d$ respectively.
Then there are two chains of four $(-2)$-curves which consist of the strict transforms of $E_a$, $L_{ad}$, $L_{bc}$, $E_c$ and $E_d$, $L_{cd}$, $L_{ab}$, $E_b$ respectively.
Hence $U_2$ is the minimal resolution of the Du Val del Pezzo surface with Dynkin type $2[2^4]$.

Fixing coordinates $[X:Y:Z]$ of $\PP^2_k$, we can write the equation of $C_t$ as 
\begin{align*}
    (Y^2-Z^2)(X+Y)+t(X^2-Z^2)(Y-X)=0.
\end{align*}
The singular member locus of the pencil $C_t$ in $\PP^1_{k, [s:t]}$ is the same as that of $|-K_{U_2}|$, and equals $\{st(t^2+11st-s^2)=0\}$.
The equation $t^2+11t-1=0$ has a double root if and only if $p=5$.
In $p \neq 5$ (resp.\ $p=5$), for $\alpha \in k$ with $\alpha^2+11\alpha-1=0$, the member $C_\alpha$ is a nodal cubic (resp.\ cuspidal cubic).

In $p=5$, by \cite[Theorem 4.1]{Lang2}, we can also construct the Du Val del Pezzo surface with Dynkin type $2[2^4]$ as the hypersurface $\{y^2-(x^3+2t^4x+4s^5t+2t^6)=0\}$ in $\PP(1,1,2,3)$, where $s$, $t$, $x$, and $y$ are coordinates of weight $1$, $1$, $2$, and $3$ respectively.
\end{eg}

\section{Pathological phenomena}\label{sec:implication}

In this section, we give implications between pathological phenomena in Definition \ref{notation}.
First let us recall the definition of log liftability.

\begin{defn}[{\cite[Definition 2.2]{KN1}}]\label{d-liftable}
Let $\alpha \colon S \to T$ be a morphism between Noetherian irreducible schemes.
Let $Y$ be a smooth projective scheme over $S$ and $E$ a simple normal crossing divisor over $S$ on $Y$ with $E = \sum_{i=1}^r E_i$ the irreducible decomposition. 
We say that the pair $(Y,E)$ \textit{lifts to $T$ via $\alpha$} if 
there exist 
\begin{itemize}
	\item a smooth and projective morphism $\mathcal{Y} \to T$ and
	\item effective divisors $\mathcal{E}_1, \dots, \mathcal E_r$ on $\mathcal{Y}$ such that $\sum _{i=1}^r \mathcal{E}_i$ is simple normal crossing over $T$
\end{itemize}
such that the base change of the schemes $\mathcal{Y}, \mathcal{E}_1,\cdots,\mathcal{E}_r$ by $\alpha \colon S \to T$ are isomorphic to $Y, E_1,\cdots, E_r$ respectively. 
When $T$ is the spectrum of a local ring $(R, m)$ and $\alpha$ is induced by $R/m \cong k$, we also say that $(Y, E)$ \textit{lifts to $R$} for short.
\end{defn}

\begin{defn}[{\cite[Definition 2.5]{KN1}}]
Let $X$ be a normal projective surface. Fix a Noetherian irreducible scheme $T$ and a morphism $\alpha \colon \Spec k \to T$.
We say that $X$ is \textit{log liftable over $T$ via $\alpha$} (or \textit{log liftable over $R$ via $\alpha$} when $T= \Spec R$) if the pair $(Z, E_{f})$ lifts to $T$ via $\alpha$ for some log resolution $f \colon Z \to X$.
When $T$ is the spectrum of a local ring $(R, m)$ and $\alpha$ is induced by $R/m \cong k$, 
we also say that $X$ is \textit{log liftable over $R$} for short. 
\end{defn}

By the following lemma, we only have to consider the minimal resolution to check the property (NL).

\begin{lem}[{\cite[Lemma 2.6]{KN1}}]
Let $X$ be a normal projective surface with only rational singularities.
Then the following are equivalent.
\begin{enumerate}
    \item[\textup{(1)}] For every resolution $f \colon Z \to X$, the pair $(Z, E_f)$ lifts to $W(k)$.
    \item[\textup{(2)}] For some resolution $f \colon Z \to X$, the pair $(Z, E_f)$ lifts to $W(k)$.
    \item[\textup{(3)}] For the minimal resolution $\pi \colon Y \to X$, the pair $(Y, E_\pi)$ lifts to $W(k)$.
\end{enumerate}
\end{lem}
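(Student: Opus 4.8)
The plan is to reduce the statement to the single assertion that liftability over $W(k)$ of the pair $(Z,E_f)$ attached to a resolution $f\colon Z\to X$ is unchanged under one blow-up at a closed point, together with its inverse. The starting point is the factorization theorem for birational morphisms of smooth surfaces: every resolution $f\colon Z\to X$ factors through the minimal resolution $\pi\colon Y\to X$, with $Z\to Y$ a composition of blow-ups at closed points. Here the hypothesis that $X$ has only rational singularities enters: it guarantees that $E_\pi$ is simple normal crossing (its components are smooth rational curves forming a tree, with no triple points and only transverse intersections), and since blowing up a closed point sends a simple normal crossing divisor to a simple normal crossing divisor, every resolution of $X$ is automatically a log resolution and the reduced exceptional divisor stays simple normal crossing at every intermediate stage. (Rationality also yields $R^i\pi_*\sO_Y=0$, which keeps cohomology and base change under control along the liftings below.) Granting the two one-step claims, the lemma follows by using $Y$ as a hub: $(1)\Rightarrow(2)$ is trivial; $(2)\Rightarrow(3)$ is obtained by contracting a liftable log resolution down to $Y$ one $(-1)$-curve at a time; and $(3)\Rightarrow(1)$ by blowing up a lift of $Y$ along an arbitrary factorization of an arbitrary resolution.

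For the first claim, that \emph{blow-ups lift}: suppose a simple normal crossing pair $(W,E)$, $E=\sum_i E_i$, has a lift $(\mathcal W,\sum_i\mathcal E_i)$ over $W(k)$, and let $\mu\colon W'\to W$ be the blow-up of a closed point $w$, with $E'$ the resulting reduced exceptional configuration (strict transforms of the $E_i$ together with the new exceptional curve), which is again simple normal crossing. Since $\mathcal W\to\Spec W(k)$ is smooth and $W(k)$ is a complete local ring with residue field $k$, the $k$-point $w$ lifts to a section $\sigma\colon\Spec W(k)\to\mathcal W$. Blowing $\mathcal W$ up along the regular section $\sigma$ yields a smooth projective $\mathcal W'\to\Spec W(k)$ with special fibre $W'$ and with exceptional divisor a $\PP^1$-bundle over the section, lifting the new exceptional curve. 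The divisor formed by the strict transforms of the $\mathcal E_i$ and this exceptional divisor restricts to $E'$ on the special fibre; the locus of $\mathcal W'$ where it fails to be simple normal crossing relative to the base is closed, its image in $\Spec W(k)$ is closed by properness and omits the unique closed point, hence is empty. So $(W',E')$ lifts.

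For the second claim, that \emph{blow-downs lift}: suppose $(W',E')$ lifts and $\mu\colon W'\to W$ contracts a $(-1)$-curve $C$. Since $C$ is exceptional for the composite $W'\to X$, it is one of the components $E'_j$, hence has a lift $\mathcal C$ over $W(k)$; as $\mathcal C$ is smooth over $W(k)$ with special fibre $\PP^1$ and $\sO_{\mathcal W'}(\mathcal C)|_{\mathcal C}$ has fibrewise degree $-1$, the family $\mathcal C\to\Spec W(k)$ is a $\PP^1$-bundle and its normal bundle in $\mathcal W'$ restricts to $\sO_{\PP^1}(-1)$ on every fibre. The relative version of Castelnuovo's contractibility criterion then produces a contraction $\mathcal W'\to\mathcal W$ over $W(k)$ with $\mathcal W\to\Spec W(k)$ smooth projective and special fibre $W$; pushing forward the remaining $\mathcal E_i$ and rerunning the closedness argument shows $(W,E)$ lifts.

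I expect the blow-down step to be the main obstacle: one must realize Castelnuovo's contraction over the non-reduced base $\Spec W(k)$, confirm that the resulting family is again smooth and projective over $W(k)$, and check that the reduced exceptional divisor — and with it the simple normal crossing property and its compatibility with base change — survives the contraction, which is exactly where the control afforded by the rationality of the singularities of $X$ is genuinely used. The blow-up direction, by contrast, is essentially formal once the centre has been lifted to a section.
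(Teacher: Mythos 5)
The paper does not actually prove this lemma; it is quoted verbatim from \cite[Lemma 2.6]{KN1}, and your strategy (factor any resolution through the minimal one as a chain of point blow-ups, then show liftability of the SNC pair is preserved by one blow-up and by one Castelnuovo contraction) is exactly the standard argument used there. Two remarks. First, there is a concrete imprecision in your blow-up step: you lift the centre $w$ to an arbitrary section $\sigma$ of $\mathcal W\to\Spec W(k)$ and then assert that the strict transforms of the $\mathcal E_i$ restrict to the strict transforms of the $E_i$ on the special fibre. This fails for a general $\sigma$: if $w\in E_i$ but $\sigma\not\subset\mathcal E_i$, then $\sigma\cap\mathcal E_i$ is a length-$n$ thickening of $w$ and the strict transform $\mathrm{Bl}_{\sigma\cap\mathcal E_i}\mathcal E_i$ is neither flat over $W(k)$ nor has the right special fibre. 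You must choose $\sigma$ inside the closed stratum of $\sum_i\mathcal E_i$ containing $w$ (inside $\mathcal E_i$, resp.\ inside $\mathcal E_i\cap\mathcal E_j$ if $w$ is a node of $E$); this is possible precisely because relative simple normal crossings makes these strata smooth, hence formally smooth, over the complete local ring $W(k)$. With that correction the blow-up step is sound. Second, your closing remark slightly misattributes the role of rationality: it is not specifically what makes the relative Castelnuovo contraction work, but rather what guarantees that $E_\pi$ (and hence every intermediate reduced exceptional divisor) is a simple normal crossing configuration of smooth rational trees, so that the liftability statements make sense and the strata/contraction bookkeeping goes through at every stage, in both directions.
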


Next, we show Theorem \ref{thm:main2} (1), i.e., (ND) $\Rightarrow$ (NL) when the rank is one.

\begin{lem}\label{lem:NDtoNL}
Let $S$ be a log del Pezzo surface of rank one.
Let $R$ be a Noetherian integral domain of characteristic zero with a morphism $\alpha \colon \Spec{k} \to \Spec{R}$. 
If $X$ is log liftable over $R$ via $\alpha$, then there exists a log del Pezzo surface of rank one over $\C$ which has the same Dynkin type as $S$.
\end{lem}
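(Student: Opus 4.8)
\emph{Approach.} My plan is to produce the desired complex surface by spreading out the log lift to a finitely generated base, passing to a characteristic-zero point of that base, embedding its residue field into $\C$, contracting the (lifted) exceptional configuration, and finally checking that the result is a log del Pezzo surface of rank one with the same Dynkin type. Let $\sigma\colon\wt{S}\to S$ be the minimal resolution and $E=E_\sigma=\sum_{i=1}^{N}E_i$ its reduced exceptional divisor, so that $\Dyn(S)$ is the weighted dual graph of $E$ and each $E_i\cong\PP^1_k$. By hypothesis $(\wt{S},E)$ lifts over $R$ via $\alpha$. Choosing a finitely generated $\Z$-subalgebra $R_0\subseteq R$ over which the lifting data $\mathcal{Y}\to\Spec R$, $\mathcal{E}_1,\dots,\mathcal{E}_N$ is already defined, I get a smooth projective family $\mathcal{Y}_0\to\Spec R_0$ with divisors $\mathcal{E}_{i,0}$ such that $\sum_i\mathcal{E}_{i,0}$ is simple normal crossing over $R_0$ and whose fibre over $\mathfrak{q}:=\ker(R_0\to k)$ base-changes to $(\wt{S},E)$. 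Since $R_0$ is a finitely generated domain of characteristic zero, $L:=\operatorname{Frac}(R_0)$ is a finitely generated field of characteristic zero and hence admits an embedding $L\hookrightarrow\C$; base-changing the generic fibre of $\mathcal{Y}_0$ along $R_0\to L\hookrightarrow\C$ yields a smooth projective surface $Y_\C$ over $\C$ with a simple normal crossing divisor $E_\C=\sum_{i=1}^{N}E_{i,\C}$.

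\emph{Transferring invariants and contracting.} Using upper semicontinuity of $h^1(\sO)$ and of $h^0(\omega^{\otimes2})$ under specializing $\eta$ to $\mathfrak{q}$, together with the rationality of $\wt{S}$ (as $S$, being log del Pezzo, is rational), one gets $h^1(Y_\C,\sO_{Y_\C})=0$ and $h^0(Y_\C,\omega_{Y_\C}^{\otimes2})=0$, so $Y_\C$ is rational by Castelnuovo's criterion; in particular $\chi(\sO_{Y_\C})=1$ and $\rho(Y_\C)=10-K_{Y_\C}^2$. Since intersection numbers and arithmetic genera are locally constant in the flat family, each $E_{i,\C}$ is smooth with $p_a=0$, hence $E_{i,\C}\cong\PP^1_\C$, the weighted dual graph of $E_\C$ equals $\Dyn(S)$ (so its intersection matrix is negative definite), and $K_{Y_\C}^2=K_{\wt{S}}^2$. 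By Artin's contractibility criterion there is a birational morphism $\sigma_\C\colon Y_\C\to S_\C$ onto a normal projective surface with $\Exc(\sigma_\C)=E_\C$; no $E_{i,\C}$ is a $(-1)$-curve, so $\sigma_\C$ is the minimal resolution of $S_\C$ and $\Dyn(S_\C)=\Dyn(S)$. As $\sigma_\C$ is a log resolution and the discrepancies of the $E_{i,\C}$ solve the same linear system (determined by the dual graph) as those of $E$ over $S$, they lie in $(-1,0]$, so $S_\C$ is klt; and $\rho(S_\C)=\rho(Y_\C)-N=\rho(\wt{S})-N=\rho(S)=1$.

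\emph{Ampleness of $-K_{S_\C}$.} Since $\rho(S_\C)=1$ and $(-K_{S_\C})^2=(\sigma_\C^*K_{S_\C})^2=(\sigma^*K_S)^2=K_S^2>0$, it suffices to show $-K_{S_\C}$ is big, i.e.\ that $-\sigma_\C^*K_{S_\C}$ is big on $Y_\C$. Fix $m>0$ with $mK_S$ Cartier; then $mK_{S_\C}$ is Cartier as well, $-m\sigma_\C^*K_{S_\C}$ is a line bundle, and Riemann--Roch on $Y_\C$ together with $\chi(\sO_{Y_\C})=1$, $(\sigma_\C^*K_{S_\C})^2=K_S^2$ and $\sigma_\C^*K_{S_\C}\cdot K_{Y_\C}=K_S^2$ gives $\chi(Y_\C,-m\sigma_\C^*K_{S_\C})=1+\tfrac12 m(m+1)K_S^2$, which tends to $\infty$. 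Moreover $-m\sigma_\C^*K_{S_\C}$ and $-m\sigma^*K_S$ are restrictions to $Y_\C$ and to the $\mathfrak{q}$-fibre of one and the same line bundle on $\mathcal{Y}_0$, and on $\wt{S}$ one has $h^2(\wt{S},-m\sigma^*K_S)=h^0(\wt{S},K_{\wt{S}}+m\sigma^*K_S)=0$, because $(K_{\wt{S}}+m\sigma^*K_S)\cdot\sigma^*(-K_S)=-(m+1)K_S^2<0$ while $\sigma^*(-K_S)$ is nef, so $K_{\wt{S}}+m\sigma^*K_S$ is not pseudo-effective. By upper semicontinuity of $h^2$ over $\Spec R_0$ this vanishing holds on the generic fibre, hence on $Y_\C$. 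Therefore $h^0(Y_\C,-m\sigma_\C^*K_{S_\C})\ge\chi(Y_\C,-m\sigma_\C^*K_{S_\C})\to\infty$, so $-\sigma_\C^*K_{S_\C}$, and hence $-K_{S_\C}$, is big; thus $S_\C$ is a log del Pezzo surface of rank one over $\C$ with $\Dyn(S_\C)=\Dyn(S)$, as required.

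\emph{The main obstacle.} The only real difficulty is the last step: nefness (hence ampleness) of $-K$ is not \emph{a priori} preserved under the degeneration, so it cannot simply be transported from $S$ to $S_\C$. The device above circumvents this by extracting the bigness of $-K_{S_\C}$ from a Riemann--Roch estimate, for which the two inputs $K_S^2>0$ and the vanishing of $h^2(\wt{S},-m\sigma^*K_S)$ (provided by the nef big divisor $\sigma^*(-K_S)$ on $\wt{S}$, which does survive the specialization via semicontinuity) are exactly what is needed. Everything else — spreading out, rationality of the central fibre, matching of the dual graphs and of $\rho$, and the contraction — is routine.
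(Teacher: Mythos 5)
Your argument is correct, and its overall skeleton (lift the minimal resolution, spread out and base change to $\C$, contract the lifted exceptional configuration, match the dual graph and the Picard rank) is the same as the paper's. The one place where you genuinely diverge is the step you yourself flag as the main obstacle: ampleness of $-K_{S_\C}$. The paper handles this more directly: it picks a Cartier divisor $\mc H$ on the total space that is relatively ample over $R$, restricts it to the special and complex fibres, and uses constancy of intersection numbers plus the projection formula to get $(K_{S_\C}\cdot(\sigma_\C)_*H_\C)=(K_S\cdot\sigma_*H_k)<0$; since $\rho(S_\C)=1$ this forces $-K_{S_\C}$ to be ample. Your route instead establishes bigness of $-\sigma_\C^*K_{S_\C}$ via Riemann--Roch, using $K_S^2>0$ and the vanishing $h^2(\wt S,-m\sigma^*K_S)=0$ (transported by semicontinuity), and then invokes $\rho=1$ to upgrade big to ample. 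Both are valid; the paper's is shorter because the lifted relatively ample class does all the work, while yours avoids choosing such a class at the cost of an $h^2$-vanishing and a bigness argument, and has the minor virtue of making explicit the identification of the discrepancy coefficients on both sides (which also gives the klt-ness of $S_\C$ that the paper leaves implicit). Your spreading-out step (finitely generated subring, embedding of the fraction field into $\C$) is the precise version of the paper's "shrinking the field of definition of the generic fibre".
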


\begin{proof}
Let $\sigma \colon \wt S \to S$ be the minimal resolution and $(\wt{\mc{S}}, \mc E)$ be an $R$-lifting of $(\wt S, E_\sigma)$.
Shrinking the field of definition of the generic fiber of $(\wt{\mc{S}}, \mc E)$ if necessary, we obtain the base change $(\wt{S}_\C, E_\C)$ of the generic fiber to $\Spec \C$.
Since $E_\C$ have the same intersection matrix as $E_\sigma$, we have a contraction $\sigma_\C \colon \wt{S}_\C \to S_\C$ of $E_\C$ by \cite[Theorem 3.9]{Bad}.
Since $H^1(\wt{S}_\C, \sO_{\wt{S}_\C}) \cong H^1(\wt{S}, \sO_{\wt{S}}) \cong 0$ and $H^0(\wt{S}_\C, 2K_{\wt{S}_\C}) \cong H^0(\wt{S}, 2K_{\wt{S}}) \cong 0$, the Castelnuovo's criterion shows that $\wt{S}_\C$ is a rational surface.
Since $K_{\wt{S}_\C}^2=K_{\wt{S}}^2$, the Picard rank of $\wt{S}_\C$ coincides with that of $\wt{S}$, and hence the Picard rank of $S_\C$ is one.
On the other hand, take $\mc H$ as a Cartier divisor on $\wt{\mc{S}}$ relatively ample over $R$ and write $H_k$ (resp.\  $H_\C$) as the restriction of $\mc H$ to $\wt S$ (resp.\ $\wt{S}_\C$).
Then 
\begin{align*}
    0>(K_S \cdot \sigma_* H_k)=(\sigma^* K_{\wt S} \cdot H_k)=(\sigma^*_\C K_{\wt{S}_\C} \cdot H_\C)=(K_{S_\C} \cdot (\sigma_\C)_* H_\C),
\end{align*}
which implies that $-K_{S_\C}$ is ample.
Hence $S_\C$ is the desired surface.
\end{proof} 

Next, we show (NK) $\Rightarrow$ (NB) for log del Pezzo surfaces of rank one with sufficiently small anti-canonical volume. 
The proof is similar to that of \cite[Theorem 4.8]{Kaw}.

\begin{lem}\label{lem:NKtoNB}
Let $S$ be a log del Pezzo surface of rank one with index $r$ in characteristic $p>0$ and $A$ an ample $\Z$-Weil divisor on $S$.
Suppose that a general member of $|-rK_S|$ is smooth.
Then $H^1(S, \sO_S (-A)) = 0$ unless $p(-K_S \cdot A) \leq (r-1) K_S^2$.
In particular, it holds that $H^1(S, \sO_S (-A)) = 0$ if $p > r(r-1)K_S^2$.
\end{lem}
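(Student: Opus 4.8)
The plan is to reduce the vanishing statement to a cohomology computation on the smooth general anti-canonical member by the standard ``take a general member and restrict'' technique, in the spirit of \cite[Theorem 4.8]{Kaw}. First I would pick a general member $C \in |-rK_S|$, which by hypothesis is smooth and, since $-rK_S$ is ample on a surface of Picard rank one with $C$ moving in a pencil of positive dimension (or by Bertini applied to the mobile part), is irreducible; by adjunction $C$ is then a smooth curve of genus $g$ with $2g-2 = (K_S + C)\cdot C = (r-1)(-rK_S)\cdot(-K_S)\cdot(-1)$... more precisely $(K_S + (-rK_S))\cdot(-rK_S) = -r(r-1)K_S^2$, wait—let me instead just record $\deg(K_C) = (K_S+C)|_C$ and $\deg(\sO_S(-A)|_C) = (-A \cdot (-rK_S)) = r(-K_S\cdot A) > 0$.

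Next I would use the short exact sequence $0 \to \sO_S(-A-C) \to \sO_S(-A) \to \sO_C(-A) \to 0$ and its twists: the idea is that since $-A - C = -A + rK_S$ becomes ``more negative,'' an inductive/vanishing argument via Serre duality and the ampleness of $-K_S$ pushes the problem down to degree-zero curves. Concretely, one runs the exact sequence repeatedly, peeling off copies of $C$: from $H^1(S,\sO_S(-A-nC)) \to H^1(S,\sO_S(-A-(n-1)C)) \to H^1(C, \sO_C(-A-(n-1)C))$, and for $n \gg 0$ Serre duality gives $H^1(S, \sO_S(-A-nC)) \cong H^1(S, \sO_S(K_S + A + nC))^\vee = 0$ since $K_S + A + nC$ is ample (Kawamata--Viehweg/Serre vanishing in char $p$ for big and nef line bundles, or simply Serre vanishing for the ample Cartier divisor $K_S + A + nC$—note $rK_S$ is Cartier so for $n$ a multiple of something this is Cartier, and we only need $n$ large). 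Hence $H^1(S,\sO_S(-A)) \neq 0$ forces $H^1(C, \sO_C(-A - jC)) \neq 0$ for some $j \geq 0$, i.e. $H^1(C, \sO_C(-A + jrK_S)) \neq 0$.

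Then I would bound things by Riemann--Roch on the curve $C$: a line bundle $L$ on a smooth genus-$g$ curve with $H^1(C, L) \neq 0$ satisfies, by Serre duality, $H^0(C, K_C \otimes L^{-1}) \neq 0$, hence $\deg(K_C \otimes L^{-1}) \geq 0$, i.e. $\deg L \leq \deg K_C = 2g - 2 = (K_S + C)\cdot C$. Applying this to $L = \sO_C(-A + jrK_S)$ with $j \geq 0$, and using that $-A + jrK_S$ has the \emph{most negative} degree at $j = 0$ when intersected against the ample $C = -rK_S$... actually the reverse: I need the degree to be large to get a contradiction, so the relevant instance is the one making $\deg L$ as large as possible, which given $-K_S \cdot A > 0$ and $j \geq 0$ and $K_S \cdot C = -rK_S^2 < 0$ is $j = 0$: $\deg L = (-A \cdot C) = (-A \cdot -rK_S) = r(-K_S \cdot A)$, wait that's positive and I want it to exceed $2g-2$. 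Let me recompute $2g - 2 = (K_S + C) \cdot C = (K_S - rK_S)\cdot(-rK_S) = (1-r)K_S \cdot (-rK_S) = (r-1) r K_S^2$. Hmm, so the inequality $\deg L \leq 2g-2$ reads $r(-K_S \cdot A) \leq (r-1) r K_S^2$, i.e. $(-K_S \cdot A) \leq (r-1)K_S^2$, which is weaker than the claimed bound $p(-K_S\cdot A) \le (r-1)K_S^2$. The factor of $p$ must come from \emph{Frobenius}: the actual argument should apply the above not to $\sO_S(-A)$ directly but pull back along the Frobenius $F \colon S \to S$, or equivalently replace $A$ by $pA$ — since $H^1(S,\sO_S(-A)) \neq 0$ implies $H^1(S, F^*\sO_S(-A)) = H^1(S,\sO_S(-pA)) \neq 0$ by the injectivity of $H^1 \to H^1$ under Frobenius (as $F$ is affine/faithfully flat, $\sO_S \to F_*\sO_S$ splits? — more simply $F$ is finite flat so $H^1(\sO_S(-A))$ is a direct summand of $H^1(F_*\sO_S(-pA)) = H^1(\sO_S(-pA))$... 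I should double-check the exact mechanism, but this is the standard trick). So the main obstacle, and the step to get right, is precisely justifying this Frobenius amplification: that nonvanishing of $H^1(S,\sO_S(-A))$ propagates to $H^1(S,\sO_S(-pA))$, after which the curve computation above with $A$ replaced by $pA$ yields $p(-K_S\cdot A) \le (r-1)K_S^2$. The final ``in particular'' is then immediate: if $p > r(r-1)K_S^2 \ge p(-K_S\cdot A)$ would force $-K_S \cdot A < 1$, impossible for an ample Cartier... no, $A$ is only $\Z$-Weil, but $-rK_S\cdot A$ and $-K_S \cdot A$—one uses $r(-K_S\cdot A) = (-rK_S)\cdot A \ge 1$ since $-rK_S$ is ample Cartier and $A$ an effective-or-ample Weil divisor, hence $-K_S \cdot A \ge 1/r$, giving $p \le p(-K_S\cdot A) \cdot r \le r(r-1)K_S^2$, contradiction; I would state this cleanly once the constants are pinned down.
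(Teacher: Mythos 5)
Your overall frame (restrict to a smooth member $C\in|-rK_S|$ and turn the nonvanishing into a degree inequality on $C$) matches the paper, but the two steps you yourself flag as the ones ``to get right'' are exactly where the argument breaks, and in a way that cannot be patched within your setup.

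First, the Frobenius amplification is false in general: the pullback $F^*\colon H^1(S,\sO_S(-A))\to H^1(S,\sO_S(-pA))$ need not be injective, and $\sO_S\to F_*\sO_S$ splits only for $F$-split varieties. Worse, the logic runs the other way: by Serre duality and Serre vanishing $H^1(S,\sO_S(-p^eA))=0$ for $e\gg 0$, so the class \emph{must die} under iterated Frobenius. (If your direct-summand claim were true, iterating it would prove $H^1(S,\sO_S(-A))=0$ unconditionally, with no constraint on $p$ at all --- contradicting the existence of rank-one log del Pezzo surfaces satisfying (NK).) The content of the lemma the paper invokes (\cite[Lemma 2.5]{Kaw}, the Ekedahl/Mumford mechanism via $0\to\sO_S\to F_*\sO_S\to B^1\to 0$) is precisely that the death of the class at the $e$-th step produces a nonzero section of $(\Omega_S^{[1]}\otimes\sO_S(-p^eA))^{**}$, i.e.\ an injection $\sO_S(p^eA)\hookrightarrow\Omega_S^{[1]}$. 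That is the object you are missing.

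Second, even granting some Frobenius amplification, your curve-level bound is vacuous. Every line bundle you restrict to $C$, namely $\sO_C(-A-jC)$ with $j\ge 0$, has \emph{negative} degree (you made a sign error: $(-A)\cdot(-rK_S)=r(A\cdot K_S)=-r(-K_S\cdot A)<0$), while $2g-2=(r-1)rK_S^2\ge 0$; so the conclusion $\deg L\le 2g-2$ holds trivially and $H^1(C,L)\ne 0$ is no contradiction. The paper gets a \emph{positive}-degree constraint because the section of $(\Omega_S^{[1]}\otimes\sO_S(-p^eA))^{**}$, restricted to a suitably general smooth $C$ and composed with $\Omega^1_S|_C\to\omega_C$, exhibits $\sO_C(p^eA)$ as a nonzero sub-line-bundle of $\omega_C$ (the case where the composite vanishes being excluded via the conormal sequence, since $(p^eA\cdot C)>0>-C^2$); this yields $p^e(-rK_S\cdot A)\le (r-1)rK_S^2$ and hence the stated bound. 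So the missing idea is the passage through the (reflexive) cotangent sheaf; the short-exact-sequence-and-peel-off-copies-of-$C$ route cannot produce the factor of $p$ or any contradiction.
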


\begin{proof}
Suppose that $H^1(S, \sO_S (-A)) \neq 0$.
Then $H^0(S, (\Omega_S^{[1]} \otimes \sO_S(-p^e A))^{**}) \neq 0$ for some $e \in \Z_{>0}$ by \cite[Lemma 2.5]{Kaw}, where $\Omega_S^{[1]}$ is the reflexive hull of $\Omega_S^1$.
In other words, we have an injection $s \colon \sO_S(p^e A) \hookrightarrow \Omega_S^{[1]}$.
Note that $|-rK_S|$ has no fixed component because its general member is smooth and connected and the Riemann-Roch theorem shows that $\dim |-rK_S| \geq (r+1)/2 \geq 1$.
In particular, we can choose a smooth member $C \in |-rK_S|$ so that $s|_C \colon \sO_C(p^e A) \rightarrow \Omega_S^{[1]}|_C$ is still injective and $\Omega_S^{[1]}|_C=\Omega_S^1|_C$.
If the composition $t \colon \sO_S(p^e A) \to \omega_C$ of $s|_C$ and the canonical map $\Omega_S^1|_C \to \omega_C$ is zero, then the conormal bundle sequence with respect to $C \subset S$ gives an injection $\sO_C(p^e A) \hookrightarrow \sO_C(-C)$, a contradiction since $(p^e A \cdot C)>0> -C^2$.
Hence $t$ is nonzero, which implies that $\deg \sO_C(p^e A) \leq \deg \omega_C$.
Hence $p^e(-rK_S \cdot A) \leq (r-1) rK_S^2$, and the assertion holds.
\end{proof}

Finally, let us see that $p=5$ is the largest positive characteristic in which log del Pezzo surfaces of rank one can satisfy (NL), (NK), or (ND). 

\begin{lem}
Suppose that $p >5$.
Then no log del Pezzo surfaces of rank one satisfy (NL), (NK), or (ND).
\end{lem}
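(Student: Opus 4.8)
The plan is to separate the three properties: (NL) and (ND) follow formally from the results already assembled, and only (NK) needs a genuine new input. For (NL): Lacini's classification \cite{Lac} applies to every log del Pezzo surface of rank one over $k$ in characteristic $p>3$, and it shows that all such surfaces are log liftable over $W(k)$ as soon as $p\ne 5$. Since $p>5$, $S$ is therefore log liftable over $W(k)$, so $S$ does not satisfy (NL). For (ND): the ring $W(k)$ is a Noetherian integral domain of characteristic zero, and the quotient $W(k)\twoheadrightarrow k$ induces a morphism $\Spec k\to\Spec W(k)$; since $S$ is log liftable over $W(k)$ via this morphism, Lemma \ref{lem:NDtoNL} produces a log del Pezzo surface of rank one over $\C$ with the same Dynkin type as $S$, so $S$ does not satisfy (ND).

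It remains to rule out (NK), which is the only substantive point. I would argue by contradiction: suppose $H^1(S,\sO_S(-A))\ne 0$ for some ample $\Z$-Weil divisor $A$. By \cite[Lemma 2.5]{Kaw} there are $e\in\Z_{>0}$ and a nonzero, hence injective, morphism $\sO_S(p^eA)\hookrightarrow\Omega_S^{[1]}$, and the task is to exclude such a subsheaf when $p>5$. The argument runs parallel to Lemma \ref{lem:NKtoNB} (equivalently \cite[Theorem 4.8]{Kaw}): restricting to a general member of $|-rK_S|$ and using the conormal sequence forces the numerical inequality $p(-K_S\cdot A)\le (r-1)K_S^2$, where $r$ is the index of $S$; since $(-rK_S\cdot A)$ is a positive integer, this rewrites as $p\le r(r-1)K_S^2$. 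One then invokes the boundedness of rank-one log del Pezzo surfaces provided by Lacini's classification in $p>3$ --- together with the absence of (ND) shown above, which matches $\Dyn(S)$ with a characteristic-zero surface --- to see that this contradicts $p>5$. Equivalently, and more cleanly, one may cite directly from \cite{Kaw} that the Kawamata--Viehweg vanishing theorem for ample $\Z$-Weil divisors holds for log del Pezzo surfaces of rank one in characteristic $p>5$. Either way, $S$ does not satisfy (NK).

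The main obstacle lies in the (NK) part. Log liftability alone does not suffice there, because $h^1(S,\sO_S(-A))$ is merely upper semicontinuous in a family, so one cannot deduce the vanishing by specializing the characteristic-zero model; one genuinely needs the characteristic-$p$ input coming from the Frobenius argument of \cite[Lemma 2.5]{Kaw}. The delicate point is that the immediate estimate $p\le r(r-1)K_S^2$ involves the a priori unbounded index $r$, so the classification of rank-one log del Pezzo surfaces must be brought in to convert it into the clean threshold $p>5$; it is precisely at $p=5$ that this estimate no longer rules out the pathology, which is what the remainder of the paper addresses. A secondary technical point is that Lemma \ref{lem:NKtoNB} requires a smooth general anti-canonical member, so one should either verify this in the cases that survive the bound or, more safely, appeal to the form of the vanishing theorem in \cite{Kaw} that avoids this hypothesis.
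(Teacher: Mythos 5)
Your treatment of (NL) and (ND) is correct and matches the paper: Lacini's results give log liftability over $W(k)$ for $p>3$, $p\neq 5$ (the paper routes this through \cite[Theorem 7.2]{Lac} and \cite[Proposition 2.5]{ABL}, but the conclusion is the same), and Lemma \ref{lem:NDtoNL} applied with $R=W(k)$ then kills (ND).

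The (NK) part has a genuine gap, and moreover you explicitly reject the route the paper actually takes. Your primary argument derives $p\le r(r-1)K_S^2$ and hopes to contradict $p>5$ via ``boundedness'' from the classification; but $r(r-1)K_S^2$ is not bounded by any universal constant over rank-one log del Pezzo surfaces (already $\PP(1,1,n)$ has unbounded index and unbounded $K^2$), so no contradiction with $p>5$ can come out of that inequality. In addition, Lemma \ref{lem:NKtoNB} needs a smooth general member of $|-rK_S|$, which is not available in this generality --- you flag both issues yourself, which is to your credit, but the flags are not repaired. The key point you are missing is that log liftability over $W(k)$ \emph{does} imply the vanishing $H^1(S,\sO_S(-A))=0$ for every ample $\Z$-Weil divisor $A$: this is \cite[Proposition 3.4]{KN1} (following the method of \cite{ABL}), proved not by semicontinuity of $h^1$ along the family --- which is the obstruction you correctly worry about --- but by Kawamata's covering trick together with logarithmic Akizuki--Nakano type vanishing on the lift. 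So your assertion that ``log liftability alone does not suffice'' for (NK) is incorrect; once (NL) is excluded, (NK) is excluded for free, and no Frobenius or numerical argument is needed. Your fallback of citing a Kawamata--Viehweg vanishing theorem in $p>5$ is in the right spirit (that statement is the main theorem of \cite{ABL}, not of \cite{Kaw}), but as written the proposal does not contain a complete argument for (NK).
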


\begin{proof}
By \cite[Theorem 7.2]{Lac}, such surfaces lifts to characteristic zero over a smooth base.
\cite[Proposition 2.5]{ABL} now shows that they do not satisfy (NL).
Hence the assertion follows from \cite[Proposition 3.4]{KN1} and Lemma \ref{lem:NDtoNL}.
\end{proof}

\section{Log del Pezzo surfaces without tigers in characteristic five}\label{sec:notiger}

In this section, we construct three log del Pezzo surfaces in characteristic $p=5$.
Only those were erroneously omitted in \cite[\S 6.1]{Lac}
among non-log liftable log del Pezzo surfaces in $p=5$ which may have no tigers when we submitted this paper.
To construct them, let us consider a certain configuration of curves in $\PP^2_k$.

\begin{lem}\label{lem:specialconfig}
In $\PP^2_k$, let $C$ be a cuspidal curve and $Q$ a smooth conic intersecting with $C$ at a smooth point $t$ of $C$ with multiplicity at least five.
Take a point $s$ so that $C \cap Q=\{s, t\}$.
(We admit that $s=t$.)
Let $u$ be the cusp of $C$.
Take the line $M_u$ which intersects with $C$ at $u$ with multiplicity three and the line $L_{su}$ (resp.\ $L_{tu}$) passing through $s$ (resp.\ $t$ ) and $u$.
Then the following hold:
\begin{enumerate}
    \item[\textup{(1)}] When $p \neq 3$, then there are coordinates $[x:y:z]$ of $\PP^2$ such that $C=\{x^3=y^2z\}$, $Q = \{-45x^2-5y^2+z^2+24xy+40yz-15zx=0\}$, $M_u=\{y=0\}$, and $t=[1:1:1]$.
    \item[\textup{(2)}] When $p = 3$, then there are coordinates $[x:y:z]$ of $\PP^2$ such that $C = \{x^3=y^2z+x^2y\}$, $Q = \{-x^2+z^2-yz-zx=0\}$, $M_u=\{y=0\}$, and $t=[0:1:0]$.
    \item[\textup{(3)}] $Q \cap L_{tu}$ consists of exactly two points.
    \item[\textup{(4)}] $s=t$ if and only if $p=2$.
    \item[\textup{(5)}] $M_u$ is the tangent line of $Q$ at some point if and only if $p=5$.
    \item[\textup{(6)}] $L_{su}$ is the tangent line of $Q$ at $s$ if and only if $p=5$.
\end{enumerate}
\end{lem}

\begin{proof}
\noindent (1): 
By \cite[Theorem 3.1]{LPS}, we can choose the coordinates $[x:y:z]$ of $\PP^2_k$ such that $C = \{x^3=y^2z\}$ and $M_u=\{y=0\}$.
Since the automorphism $[x:y:z] \mapsto [\alpha x:y:\alpha^3 z]$ of $\PP^2_k$ with $\alpha \in k^*$ fixes $C$, we may assume that $t=[0:1:0]$ or $[1:1:1]$.
Since $Q$ does not pass through $u = [0:0:1]$, we can write $Q = \{ax^2+by^2+z^2+dxy+eyz+fzx=0\}$ for some $a, b,d,e,f \in k$.
Since 
\begin{align*}
    C \cap Q \cap \{y \neq 0\} &\cong \{x^3=z, ax^2+b+z^2+dx+ez+fzx=0\} \subset \AA^2_{k, (x,z)}\\
    & \cong \{x^6+fx^4+ex^3+ax^2+dx+b=0\} \subset \AA^1_{k, (x)}, 
\end{align*}
either $0$ or $1$ is the root of multiplicity at least five of $x^6+fx^4+ex^3+ax^2+dx+b=0$.
If the former holds, then $a=b=d=e=f=0$ and hence $Q=\{z^2=0\}$, a contradiction.
Hence the latter holds.
Since the coefficient of $x^5$ is zero, we obtain $x^6+fx^4+ex^3+ax^2+dx+b=(x-1)^5(x+5)$.
Comparing coefficients, we obtain the assertion.

\noindent (2):
As in the proof of the assertion (1), if $C = \{x^3=y^2z\}$, then $Q=\{-45x^2-5y^2+z^2+24xy+40yz-15zx=0\}=\{(y+z)^2=0\}$, a contradiction.
Hence we can choose the coordinates $[x:y:z]$ of $\PP^2_k$ such that $C = \{x^3=y^2z+x^2y\}$ and $M_u=\{y=0\}$ by \cite[Theorem 3.1]{LPS}.
Since the automorphism $[x:y:z] \mapsto [x+\alpha y:y:\alpha x+(\alpha^3-\alpha^2)y+z]$ of $\PP^2_k$ with $\alpha \in k$ fixes $C$, we may assume that $t=[0:1:0]$.
Since $Q$ does not pass through $u=[0:0:1]$, we can write $Q = \{ax^2+by^2+z^2+dxy+eyz+fzx=0\}$ for some $a, b,d,e,f \in k$.
Since 
\begin{align*}
    C \cap Q \cap \{y \neq 0\} &\cong \{x^3-x^2=z, ax^2+b+z^2+dx+ez+fzx=0\} \subset \AA^2_{k, (x,z)}\\
    & \cong \left\{
    \begin{array}{l}
    x^6+x^5+(1+f)x^4+(e-f)x^3\\
    +(a-e)x^2+dx+b=0
    \end{array}
    \right\} \subset \AA^1_{k, (x)}, 
\end{align*}
$0$ is the root of multiplicity five of $x^6+x^5+(1+f)x^4+(e-f)x^3+(a-e)x^2+dx+b=0$.
Comparing coefficients, we obtain the assertion.

\noindent (3):
In $p \neq 3$ (resp.\ $p=3$), we have $L_{tu}=\{x=y\}$ (resp.\ $\{x=0\}$) and $L_{tu} \cap Q=\{[1:1:1], [1:1:-26]\}$ (resp.\ $\{[0:1:0], [0:1:1]\}$).
Hence the assertion holds.

\noindent (4): 
In $p \neq 3$ (resp.\ $p=3$), we have $C \cap Q=\{[1:1:1], [-5:1:-125]\}$ (resp.\ $\{[0:1:0], [-1:1:1]\}$).
Hence the assertion holds.

\noindent (5): 
In $p \neq 3$ (resp.\ $p=3$), $Q \cap M_u$ is isomorphic to $\{z^2-15z-45=0\} \subset \AA^1_{(z)}$ (resp.\ $\{z^2-z-1=0\} \subset \AA^1_{(z)}$), whose discriminant equals $405=5 \cdot 3^4$ (resp.\ $5$).
Hence the assertion holds.

\noindent (6):
Suppose that $L_{su}$ is the tangent line of $B$ at $s$
Assume that $p = 3$.
Then $s=[-1:1:1]$ and $L_{su}=\{x=-y\}$.
However, $Q \cap L_{su} \cong \{x=-y, (x-z)(x+z)=0\}$ consists of two points, a contradiction.

Hence $p \neq 3$, $s=[-5:1:-125]$, and $L_{su}=\{x=-5y\}$.
Since 
\begin{align*}
    Q \cap L_{su} \cap \{y \neq 0\} &\cong \{x=-5, -45x^2-5+z^2+24x+40z-15zx=0\} \subset \AA^2_{k, (x,z)}\\
    & \cong \{(z+125)(z-10)=0\} \subset \AA^1_{k, (z)}
\end{align*}
consists of one point, it holds that $125+10 = 135=5 \cdot 3^3$ equals zero in $k$. 
Hence the assertion holds.
\end{proof}

By Lemma \ref{lem:specialconfig} (5) and (6), we can construct pathological log del Pezzo surfaces in $p=5$ as follows.

\begin{eg}\label{eg:Lac4.8cex-1}
In $p = 5$, we follow the notation as in Lemma \ref{lem:specialconfig}.
Set $(n_t, n_u)=(5,3)$, $(6, 3)$, or $(5, 4)$.
Next blow-up at $s$ twice along along $L_{su}$.
Next blow-up at $t$ $n_t$ times along $C$.
Finally blow-up at $u$ $n_u$ times along $C$.

When $(n_t, n_u)=(5,3)$, $(6, 3)$, and $(5, 4)$, we obtain the minimal resolution $\wt S$ of a log del Pezzo surface $S$ of rank one with $\Dyn(S)=2[3,2]+[3]+[2]+[2^4]$, $[4,2]+[3,2^5]+[3,2]+[2]$, and $[2,4]+[2,3,2^2]+[3]+[2^4]$ respectively.
Indeed, the strict transform of the $(-1)$-curve over $u$ in $S$ is of $(-K_S)$-degree $\frac15$, $\frac{1}{35}$, or $\frac{5}{77}$, and hence $-K_S$ is ample.
Since the Dynkin types violate the Bogomolov bound \cite[9.2 Corollary]{KM}, these surfaces satisfy (ND) and (NL) by Lemma \ref{lem:NDtoNL}.
\end{eg}

\section{Extractions of tigers}\label{sec:Lac}

In this section, we prove a refinement of \cite[Theorem 6.2]{Lac} and an alternative of \cite[Theorem 6.25]{Lac}.
To this end, we recall the definition of almost log canonical singularities.

\begin{defn}[{\cite[Definition 3.6]{Lac}}]\label{def:almostlc}
Let $s \in S$ be the germ of a klt surface singularity and $D$ a reduced $\Z$-Weil divisor on $S$.
We say that a pair $(S, D)$ is \textit{almost log canonical} if the coefficient of every exceptional divisor of the minimal resolution $\sigma \colon \wt S \to S$ with respect to $(S, D)$ is at most one.
When $s \in S$ is singular, $(S, D)$ is almost log canonical if and only if either $(S, D)$ is log canonical at $s$ or, by replacing $S$ as a suitable \'{e}tale neighborhood of $s$ if necessary, the $\sigma$-exceptional divisor $E_\sigma$ satisfies one of the following.
\begin{enumerate}
    \item[\textup{(a)}] $E_\sigma$ is irreducible and the strict transform of $D$ in $\wt S$ is simply tangent to $E_\sigma$.
    \item[\textup{(b)}] $E_\sigma$ is irreducible and the strict transform of $D$ in $\wt S$ has two branch meeting transversally on $E_\sigma$, with each branch meeting $E_\sigma$ transversally as well.
    \item[\textup{(c)}] $E_\sigma$ consists of two irreducible components $E_1$ and $E_2$, and the strict transform of $D$ in $\wt S$ has one branch meeting each component transversally at $E_1 \cap E_2$.
\end{enumerate}
\end{defn}

Next, we prepare an auxiliary lemma. 

\begin{lem}\label{LacA.19}
Let $S$ be a projective surface with klt singularities and $\sigma \colon \wt S \to S$ be the minimal resolution.
Take a reduced and irreducible curve $C \subset S$ such that $(K_S+C \cdot C) \leq 0$.
Then one of the following holds.
\begin{enumerate}
    \item[\textup{(1)}] $C \subset S^\circ$.
    \item[\textup{(2)}] For each $s \in S$, $\sigma$ is a log resolution of $(S, C)$ at $s$ or $E_\sigma$ satisfies one of (a)--(c) in Definition \ref{def:almostlc} at $s$.
\end{enumerate}
Moreover, if $C$ has a singularity in $S^\circ$ in addition, then $C \subset S^\circ$.
\end{lem}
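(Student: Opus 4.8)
The plan is to work on the minimal resolution $\sigma\colon\wt S\to S$ and analyze, for each singular point $s\in S$, the local configuration of the strict transform $\wt C$ of $C$ together with the exceptional divisor $E_\sigma$ over $s$. The hypothesis $(K_S+C)\cdot C\le 0$ should be read as a bound on how much $C$ can interact with the singularities, via the following identity: if $\wt C$ is the strict transform of $C$ and $\Gamma$ is the $\sigma$-log pullback of $C$ (so $\sigma^*(K_S+C)=K_{\wt S}+\wt C+\sum_i e_i E_i$ with $e_i=e(E_i;S,C)\ge 0$ since $S$ is klt), then by adjunction-type bookkeeping on $\wt S$ one gets
\begin{align*}
0\ge (K_S+C)\cdot C=(K_{\wt S}+\wt C)\cdot\wt C+\Big(\sum_i e_iE_i\Big)\cdot\wt C
= 2p_a(\wt C)-2+\sum_i e_i\,(E_i\cdot\wt C).
\end{align*}
First I would set up this computation carefully (the intersection $\Gamma\cdot\wt C$ uses that $\sigma^*C\cdot\wt C=0$ componentwise on exceptional curves is false in general, so the correct statement is $\sigma_*(\sigma^*C)=C$ and one computes $(\sigma^*(K_S+C))\cdot\wt C=(K_S+C)\cdot C$ by projection formula, then expands the left side on $\wt S$). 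The upshot is an inequality of the shape $2p_a(\wt C)-2+\sum_i e_i(E_i\cdot\wt C)\le 0$ with all $e_i\ge 0$ and all $E_i\cdot\wt C\ge 0$.

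Next, fix a singular point $s\in S$ and suppose $C$ passes through $s$, so $\wt C$ meets the exceptional divisor $E_{\sigma,s}$ over $s$ in at least one point. I would argue that $\sigma$ being a log resolution of $(S,C)$ at $s$ fails only if one of three things happens: (i) $\wt C$ is singular at a point of $E_{\sigma,s}$; (ii) $\wt C$ is tangent to some component $E_i$; (iii) $\wt C$ meets two components of $E_{\sigma,s}$ simultaneously, or meets one component in two or more points, or passes through a node of $E_{\sigma,s}$. In each of these failure cases the corresponding local intersection contribution $\sum_i e_i(E_i\cdot\wt C)$ (plus the genus term if $\wt C$ acquires a singularity) is at least $2\min_i\{e_i \text{ over the component(s) met}\}$ — but here I need the key arithmetic input that for a klt (indeed canonical or log-terminal) chain/star singularity the coefficient of the component(s) that $\wt C$ can plausibly hit is large enough. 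The cleanest route is: whatever the precise values, the inequality forces the total of (genus contribution) $+\sum e_i(E_i\cdot\wt C)$ over \emph{all} singular points to be $\le 2$, hence $\le 2$ locally at $s$, and a case check shows the only configurations compatible with this bound and with $C\not\subset S^\circ$ at $s$ are exactly (a), (b), (c) of Definition \ref{def:almostlc} — or $\sigma$ is already a log resolution at $s$. Cases (a) and (b) are the "$E_\sigma$ irreducible, $\wt C$ simply tangent / two transverse branches through one point of $E_\sigma$" pictures, and (c) is the "two components, one branch through the node meeting each transversally" picture; each contributes exactly the borderline amount to the inequality.

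For the "moreover" clause: if $C$ has a singular point in $S^\circ$, then already $2p_a(\wt C)-2\ge 2p_a(C)-2\ge 0$ if $C$ has positive arithmetic genus, or more to the point the singularity of $C$ at a smooth point of $S$ forces $p_a(\wt C)\ge 1$ unless... — actually the right statement is that a singular point of $C$ lying in $S^\circ$ already uses up the entire budget: it contributes a strictly positive amount (at least $1$, since resolving a curve singularity drops $p_a$ by at least $1$, or the delta-invariant is $\ge 1$) to $2p_a(C)-2p_a(\wt C)$, forcing $2p_a(\wt C)-2+(\text{positive})+\sum e_i(E_i\cdot\wt C)\le 0$ to be so tight that no further interaction with singular points of $S$ is allowed; combining with the main dichotomy and checking that (a),(b),(c) each already saturate the bound, we conclude $C$ cannot meet $\Sing S$ at all, i.e. $C\subset S^\circ$. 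I would phrase this as: the contributions are all nonnegative, their sum is $\le 2$ (in fact one must be more careful and track that $2p_a(C)-2+\sum e_i(E_i\cdot C')\le 0$ with $C'$ now the strict transform only over $\Sing S$), and a singular point of $C$ in $S^\circ$ plus a branch through $\Sing S$ would overflow.

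\medskip
\noindent\textbf{Main obstacle.} The delicate part is the local case analysis at a klt singularity $s$: I must enumerate, for each relevant type of $E_{\sigma,s}$, which components $E_i$ can be met by $\wt C$ and with what coefficient $e_i$, and verify that \emph{every} way of violating "log resolution at $s$" beyond the three listed exceptions forces $2p_a(\wt C)-2+\sum e_i(E_i\cdot\wt C)>0$ locally, contradicting the global bound. This requires knowing good lower bounds on the $e_i$ for the components near which a curve with small self-intersection can pass — essentially the content of \cite[8.3.9 Lemma, 10.11 Lemma]{KM} type estimates — and being sure the three borderline configurations (a),(b),(c) are the only ones that sit exactly at the boundary. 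I expect this to be mostly a bounded check once the correct numerical inequality and the list of coefficients from the classification of klt surface singularities are in hand; the "moreover" clause then follows formally from the same inequality by noting a curve singularity in $S^\circ$ already exhausts the budget.
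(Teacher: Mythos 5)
Your numerical setup coincides with the paper's: writing $\sigma^*(K_S+C)=K_{\wt S}+\wt C+\sum_i b_iE_i$ with $b_i=e(E_i;S,C)\ge 0$ (nonnegativity coming from minimality of the resolution plus effectivity of $C$, not from kltness per se), the projection formula gives $2p_a(\wt C)-2+\sum_i b_i(\wt C\cdot E_i)\le 0$, this budget of $2$ localizes to each singular point, and the ``moreover'' clause follows because a singularity of $C$ in $S^\circ$ survives in $\wt C$, forces $p_a(\wt C)\ge 1$, and then any branch of $C$ through $\Sing S$ makes $\sum_i b_i(\wt C\cdot E_i)>0$ (using that the inverse intersection matrix of the exceptional divisor has strictly negative entries). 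All of this is correct and is exactly how the paper begins.

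The genuine gap is the local case analysis, which you explicitly defer and which constitutes essentially the entire proof in the paper (nine cases over chains and stars). Two ingredients needed to carry it out are missing from your outline. First, the relevant quantities are not a priori lower bounds on coefficients ``near which a curve can pass'': the discrepancy part of $b_i$ vanishes for Du Val points, so no useful uniform bound exists, and $b_i=d_i+e_i$ with $\sigma^*C=\wt C+\sum_i d_iE_i$ depends on the intersection vector $\bm{a}=((\wt C\cdot E_i))_i$ itself. The paper computes the log discrepancies $f_i=1-b_i$ explicitly via the determinant formula of \cite[(3.1.10)]{Kol92}, e.g.\ $f_1=\frac{\Delta(\Gamma_1)\Delta(\Gamma_2)}{\Delta(\Gamma)}\bigl(\frac{1}{\Delta(\Gamma_1)}+\frac{1}{\Delta(\Gamma_2)}-a_1\bigr)$ for a chain; the estimates of \cite[8.3.9 Lemma]{KM} that you invoke concern a different quantity (maximal coefficients for hunt steps) and do not substitute for this. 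Second, since there are infinitely many klt dual graphs and, a priori, $\wt C$ could meet many components with high multiplicity, the check is not obviously finite; the paper makes it so by a monotonicity argument ($\langle\bm{a},\bm{b}\rangle$ is strictly increasing in $\bm{a}$ because $M^{-1}$ has negative entries), which allows an induction on $\bm{a}$ reducing everything to at most two nonzero intersection numbers, each equal to $1$ or $2$. Without these two devices, the assertion that only configurations (a)--(c) of Definition \ref{def:almostlc} survive the budget is not established, and that assertion is the content of the lemma.
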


\begin{proof}
Let $E_\sigma \coloneqq \sum_{i=1}^n E_i$ be the irreducible decomposition and $\wt C \coloneqq \sigma_*^{-1} C$.
Take non-negative rational vectors $\bm{a} = (a_i)_{i=1}^n, \bm{b} = (b_i)_{i=1}^n, \bm{d} = (d_i)_{i=1}^n, \bm{e} = (e_i)_{i=1}^n$ and $\bm{f} = (f_i)_{i=1}^n$ so that $a_i= (\wt C \cdot E_i)$, $\sigma^* C = \wt C + \sum_{i=1}^n d_i E_i$, $\sigma^* K_S =K_{\wt S} +\sum_{i=1}^n e_i E_i$, $\bm{b}=\bm{d}+\bm{e}$, and $\bm{f}=\bm{1} - \bm{b}$, where $\bm{1}=(1)_{i=1}^n$.
We will use the symbol $\langle$-,-$\rangle$ to denote the inner product of vectors.
Write $M =((E_i \cdot E_j))_{i=1, j=1}^{n, n}$ for the intersection matrix of $E_\sigma$.
By assumption, we have 
\begin{align*}
0 &\geq (K_S +C \cdot C) = (K_{\wt S}+ \wt C \cdot \sigma^* C) = (K_{\wt S}+\wt C \cdot \wt C) + \sum_{i=1}^n d_i (K_{\wt S} \cdot E_i) + \langle \bm{d}, \bm{a} \rangle \\
&\geq (K_{\wt S}+\wt C \cdot \wt C) = 2 (p_a(\wt C)-1).
\end{align*}

Suppose that $p_a(\wt C) \geq 1$.
Then we have $p_a(\wt C)=1$.
Moreover $d_i=0$ or $a_i=0$ for each $1 \leq i \leq n$, and hence $C \subset S^\circ$.
In particular, the last assertion holds.

In what follows, we may assume that $p_a( \wt C)=0$ and $C \not\subset S^\circ$.
Taking a resolution of $S \setminus C$ if necessary, we may also assume that $\Sing S \subset C$.
Then 
\begin{align*}
(K_S +C \cdot C) &= (\sigma^*(K_S + C) \cdot \wt C) = (K_{\wt S}+\wt C \cdot \wt C) + \langle \bm{b}, \bm{a} \rangle \\
&= -2+ \langle \bm{b}, \bm{a} \rangle  = -2+\sum_{i=1}^n a_i - \langle \bm{f}, \bm{a}\rangle
\end{align*}
and hence
\begin{align*}
0 \geq (K_S+C \cdot C) \iff \langle \bm{a}, \bm{b} \rangle \leq 2 \iff \langle \bm{a}, \bm{f}\rangle \geq \sum_{i=1}^n a_i -2. 
\end{align*}

Set $\Sing S = \{x_1, \ldots, x_m\}$ and take the decomposition $\bm{a}=\sum_{j=1}^m \bm{a}_j$ and $\bm{b} = \sum_{j=1}^m \bm{b}_j$ with respect to $\{x_1, \ldots, x_m\}$ so that $\langle \bm{a}, \bm{b} \rangle= \sum_{j=1}^m \langle \bm{a}_j, \bm{b}_j \rangle$.
If $(K_S+C \cdot C) \leq 0$, then $\langle \bm{a}_j, \bm{b}_j \rangle \leq 2$ for all $1 \leq j \leq m$.
Since the condition (2) is local, we may assume that $\Sing S$ consists of one point contained in $C$, and 
\begin{align}
    \langle \bm{a}, \bm{f} \rangle \geq \sum_{i=1}^n a_i -2. \label{eq:lemA1}
\end{align}

On the other hand, $M^{-1}$ has only negative entries by \cite[2.19.3 Lemma]{Kol92}.
If $\bm{a}$ becomes (strictly) smaller, then so does $\langle \bm{a}, \bm{b} \rangle$ since $\bm{d}=-M^{-1} \bm{a}$ and $\bm{b}=\bm{d}+\bm{e}$.
In particular, if (\ref{eq:lemA1}) holds for some $\bm{a}$, then it also holds, but the equality does not hold, for smaller $\bm{a}$.

Now let us show the first assertion by induction on $\bm{a}$.
Set $\Gamma$ as the weighted dual graph of $E_\sigma$ with the vertex $v_i$ corresponding to $E_i$.
Without further reference, we will use \cite[(3.1.10)]{Kol92} to calculate the log discrepancies $\bm{f}$.

\textbf{Case 1}:
Suppose that $\Gamma$ is a chain.

\textbf{Case 1a}:
Suppose further that $a_i \neq 0$ if and only if $i=1$.
The subgraph $\Gamma - v_1= \Gamma_1+\Gamma_2$ is the disjoint union of two chains (each $\Gamma_i$ could be empty). Then
\begin{align*}
    f_1 =\frac{\Delta(\Gamma_1)\Delta(\Gamma_2)}{\Delta(\Gamma)}(\frac1{\Delta(\Gamma_1)}+\frac1{\Delta(\Gamma_2)}-a_1).
\end{align*}
If $a_1=1$, then $\sigma$ is a log resolution of $(S, C)$.
Suppose further that $a_1 \geq 2$.
Then $f_1 \geq 0$ since $a_1 f_1 = \langle \bm{a}, \bm{f} \rangle \geq \sum_{i=1}^n a_i -2 \geq 0$.
Hence $\Delta(\Gamma_1)=\Delta(\Gamma_2)=1$ and $a_1=2$.
Hence $E_\sigma$ is irreducible.
Moreover, either $\sigma$ is a log resolution of $(S, C)$ or $E_\sigma$ satisfies (a) or (b) of Definition \ref{def:almostlc}.
Note that when $a_1=2$, we have $\langle \bm{a}, \bm{f} \rangle= \sum_{i=1}^n a_i -2$ and hence (\ref{eq:lemA1}) does not hold for larger $\bm{a}$.

\textbf{Case 1b}:
Suppose further that $a_i \neq 0$ if and only if $i \leq 2$.
The subgraph $\Gamma - v_1-v_2= \Gamma_1+\Gamma_2+\Gamma_3$ is the disjoint union of three chains (each $\Gamma_i$ could be empty) such that $\Gamma_{1,2} \coloneqq \Gamma_1+v_1+\Gamma_2$ and $\Gamma_{2,3} \coloneqq \Gamma_2+v_2+\Gamma_3$ are connected.
Since (\ref{eq:lemA1}) also holds for $\bm{a}-(a_1, 0^{n-1})$ and $\bm{a}-(0, a_2, 0^{n-2})$, the consequence of Case 1a shows that $a_1 =a_2=1$.
Hence (\ref{eq:lemA1}) for $\bm{a}$ is equivalent to $f_1+f_2 \geq 0$.
Hence we may assume that $f_1 \geq 0$.
On the other hand, we have
\begin{align*}
   f_1 = \frac{\Delta (\Gamma_1) \Delta(\Gamma_{2,3})}{\Delta(\Gamma)}\left( \frac{1-\Delta(\Gamma_1)}{\Delta(\Gamma_1)}+\frac{1-\Delta(\Gamma_3)}{\Delta(\Gamma_{2,3})}\right) \geq 0.
\end{align*}
Hence $\Delta(\Gamma_1)=\Delta(\Gamma_3)=1$.
In particular, $\Gamma_1=\Gamma_3=\emptyset$.
Hence either $\sigma$ is a log resolution of $(S, C)$ or $E_\sigma$ satisfies (c) of Definition \ref{def:almostlc}.
Since it holds that $\langle \bm{a}, \bm{f} \rangle= \sum_{i=1}^n a_i -2$, (\ref{eq:lemA1}) does not hold for larger $\bm{a}$.

\textbf{Case 1c}:
Suppose further that $a_{i} \neq 0$ if $i \leq 3$.
Then (\ref{eq:lemA1}) also holds for $(a_1, a_2, 0^{n-2})$, a contradiction with the consequence of Case 1b.

\textbf{Case 2}: 
Suppose that $\Gamma$ is not a chain.
Let $v_n$ be the center of $\Gamma$ and set $\Gamma_1, \Gamma_2$ and $\Gamma_3$ be the branches of $\Gamma$.

\textbf{Case 2a}:
Suppose further that $a_i \neq 0$ if and only if $i=n$.
Then 
\begin{align*}
    f_n = \frac{\Delta(\Gamma_1) \Delta(\Gamma_2) \Delta(\Gamma_3)}{\Delta(\Gamma)} \left( \frac1{\Delta(\Gamma_1)} + \frac1{\Delta(\Gamma_2)} + \frac1{\Delta(\Gamma_3)} - (1+a_n)\right).
\end{align*}
If $a_n \geq 2$, then (\ref{eq:lemA1}) shows that 
\begin{align*}
    \frac32 \geq \frac1{\Delta(\Gamma_1)} + \frac1{\Delta(\Gamma_2)} + \frac1{\Delta(\Gamma_3)} \geq 1+a_n \geq 3,
\end{align*}
a contradiction. 
Hence $a_1=1$ and $\sigma$ is a log resolution of $(S, C)$.

\textbf{Case 2b}:
Suppose further that $v_1 \in \Gamma_1$ and $a_i \neq 0$ if and only if $i=1$.
The subgraph $\Gamma_1 - v_1=\Gamma_{1,1}+\Gamma_{1,2}$ is the disjoint union of two chains (each $\Gamma_{1,i}$ could be empty) such that $\Gamma_{1,2}$ is adjacent to $v_n$.
Let $\Gamma' \coloneqq \Gamma -\Gamma_{1,1} - v_1$.
Then
\begin{align*}
    f_1 = \frac{\Delta(\Gamma_{1,1}) \Delta(\Gamma)}{\Delta(\Gamma)} \left( \frac{1-(\Delta(\Gamma_3)-1)(\Delta(\Gamma_2)-1)}{\Delta(\Gamma')} + \frac{1}{\Delta(\Gamma_{1,1})} - a_1 \right).
\end{align*}
If $a_1 \geq 2$, then (\ref{eq:lemA1}) shows that
\begin{align*}
    1 \geq \frac{1-(\Delta(\Gamma_3)-1)(\Delta(\Gamma_2)-1)}{\Delta(\Gamma')} + \frac{1}{\Delta(\Gamma_{1,1})} \geq a_1 \geq 2,
\end{align*}
a contradiction.
Hence $a_1 = 1$ and $\sigma$ is a log resolution of $(S, C)$.

\textbf{Case 2c}:
Suppose further that $v_1 \in \Gamma_1$, $v_2 \in \Gamma_2$ and $a_i \neq 0$ if and only if $i \leq 2$.
Then for $i=1, 2$, $\Gamma_i - v_i=\Gamma_{i,1}+\Gamma_{i,2}$ is the disjoint union of two chains (each $\Gamma_{i,j}$ could be empty) such that $\Gamma_{i,2}$ is adjacent to $v_n$.
Set $\Gamma' \coloneqq \Gamma - \Gamma_{1,1} - v_1$.
Since (\ref{eq:lemA1}) holds for $\bm{a}-(a_1, 0^{n-1})$ and $\bm{a}-(0, a_2, 0^{n-2})$, the consequence of Case 2b show that $a_1 =a_2=1$.
Hence (\ref{eq:lemA1}) for $\bm{a}$ is equivalent to $f_1+f_2 \geq 0$.
Hence we may assume that $f_1 \geq 0$.
However, we have
\begin{align*}
    f_1 &= \frac{\Delta(\Gamma') \Delta(\Gamma_{1,1})}{\Delta(\Gamma)} \left( \frac{1 - (\Delta(\Gamma_2) -1) (\Delta(\Gamma_3) - 1)}{\Delta(\Gamma')} + \frac{1-\Delta(\Gamma_{1,1})}{\Delta(\Gamma_{1,1})} - \frac{\Delta(\Gamma_{2,1}) \Delta(\Gamma_3)}{\Delta(\Gamma')}\right)\\
    & < 0,
\end{align*}
a contradiction.

\textbf{Case 2d}:
Suppose further that $v_1, v_2 \in \Gamma_1$ and $a_i \neq 0$ if and only if $i \leq 2$.
Then $\Gamma_1 -v_1 - v_2 = \Gamma_{1,1}+\Gamma_{1,2} + \Gamma_{1,3}$ is the disjoint union of three chains (each $\Gamma_{1,i}$ could be empty). 
We may assume that $\Gamma_{1,3}$ is adjacent to both $v_2$ and $v_n$, and $\Gamma_{1,2}$ is adjacent to both $v_1$ and $v_2$.
Set $\Gamma_a \coloneqq \Gamma - \Gamma_{1,1} - v_1$. $\Gamma_b \coloneqq \Gamma_a - \Gamma_{1,2} - v_2$, and $\Gamma_c \coloneqq \Gamma - \Gamma_b - v_2$.
Since (\ref{eq:lemA1}) holds for $\bm{a}-(a_1, 0^{n-1})$ and $\bm{a}-(0, a_2, 0^{n-2})$, the consequence of Case 2b show that $a_1 =a_2=1$.
Hence (\ref{eq:lemA1}) for $\bm{a}$ is equivalent to $f_1+f_2 \geq 0$.
However, we have
\begin{align*}
    f_1 &= \frac{\Delta (\Gamma_{1,1}) \Delta (\Gamma_a)}{\Delta (\Gamma)} \left( \frac{1 - (\Delta (\Gamma_2) - 1) (\Delta (\Gamma _3) -1)}{\Delta (\Gamma_a)} + \frac{1- \Delta(\Gamma_{1,1})}{\Delta (\Gamma_{1,1})} - \frac{\Delta (\Gamma_b)}{\Delta (\Gamma_a)} \right) <0,\\
    f_2 &= \frac{\Delta (\Gamma_{b}) \Delta (\Gamma_c)}{\Delta (\Gamma)} \left( \frac{1 - (\Delta (\Gamma_2) - 1) (\Delta (\Gamma _3) -1)}{\Delta (\Gamma_b)} + \frac{1- \Delta(\Gamma_c)}{\Delta (\Gamma_{c})} - \frac{\Delta (\Gamma_{1,1})}{\Delta (\Gamma_c)} \right) <0,
\end{align*}
a contradiction.

\textbf{Case 2e}:
Suppose further that $v_1 \in \Gamma_1$ and $a_i \neq 0$ if and only if $i = 1$ or $n$.
The subgraph $\Gamma_1 - v_1=\Gamma_{1,1}+\Gamma_{1,2}$ is the disjoint union of two chains (each $\Gamma_{1,i}$ could be empty) such that $\Gamma_{1,2}$ is adjacent to $v_n$.
Set $\Gamma' \coloneqq \Gamma - \Gamma_{1,1} - v_1$.
Since (\ref{eq:lemA1}) holds for $\bm{a}-(a_1, 0^{n-1})$ and $\bm{a}-(0^{n-1}, a_n)$, the consequences of Cases 2a and 2b show that $a_1 =a_n=1$.
Hence (\ref{eq:lemA1}) for $\bm{a}$ is equivalent to $f_1+f_n \geq 0$.
However, we have
\begin{align*}
    f_1&=\frac{\Delta (\Gamma') \Delta (\Gamma_{1,1})}{\Delta (\Gamma)} \left( \frac{1 - (\Delta (\Gamma_2) - 1) (\Delta (\Gamma _3) -1)}{\Delta (\Gamma')} + \frac{1- \Delta(\Gamma_{1,1})}{\Delta (\Gamma_{1,1})} - \frac{\Delta (\Gamma_2) \Delta(\Gamma_3) }{\Delta (\Gamma')} \right)<0,\\
    f_n&=\frac{\Delta(\Gamma_1)\Delta(\Gamma_2)\Delta(\Gamma_3)}{\Delta(\Gamma)} \left( \frac{1}{\Delta(\Gamma_1)} + \frac{1}{\Delta(\Gamma_2)} + \frac{1}{\Delta(\Gamma_3)} -2 - \frac{\Delta(\Gamma_{1,1})}{\Delta(\Gamma_1)}\right) <0,
\end{align*}
a contradiction.

\textbf{Case 2f}:
Suppose further that there is an integer $l \geq 3$ such that $a_{1}, a_{2}, a_{l} \neq 0$.
Then (\ref{eq:lemA1}) holds for $(a_1, a_2, 0^{n-2})$, a contradiction with the consequences of Cases 2c and 2d.

Combining these results, we obtain the assertion.
\end{proof}

In Remark \ref{rem:1sthunt}, we will see that $(S, C)$ as in Lemma \ref{LacA.19} can have an almost log canonical singularity and the minimal resolution of $S$ is not a log resolution of $(S, C)$ in general.
Using Lemma \ref{LacA.19}, we can show that an analogue of a hunt step adopted in \cite[Theorem 6.2]{Lac} outputs a plt pair or a Du Val del Pezzo surface of rank one as follows.

\begin{thm}[{cf. \cite[Theorem 6.2]{Lac}}]\label{Lac6.2}
Let $S$ be a rank one log del Pezzo surface with tigers in its minimal resolution $\wt S$.
\begin{enumerate}
    \item[\textup{(1)}] Suppose that $S$ has an exceptional tiger $E$ in $\wt S$.
    Take $f$ and $\pi$ as in Lemma \ref{lem:Sarkisov}.
    Let $C$ be the support of $\Delta_1$ if $\pi$ is birational.
    Then, by replacing $E$ if necessary, one of the following holds.
    \begin{enumerate}
        \item[\textup{(a)}] $T$ is a net, $E$ is a section or a bisection, and $K_T+E$ is plt.
        \item[\textup{(b)}] $\pi$ is birational, $S_1$ is a Du Val del Pezzo surface of rank one, and $C$ is an irreducible, reduced and singular anti-canonical member of $S_1$ contained in $S_1^\circ$.
        The image of the $\pi$-exceptional curve in $S_1$ is the singularity of $C$.
        \item[\textup{(c)}] $\pi$ is birational and $S_1$ is a log del Pezzo surface of rank one. 
        $K_{S_1}+C$ is plt and anti-nef.
        The image of the $\pi$-exceptional curve in $S_1$ is contained in $C$.
    \end{enumerate}
    \item[\textup{(2)}] If $S$ has tigers in $\wt S$ but none of which is exceptional, then $S$ 
    contains a reduced and irreducible curve $C$ such that $K_S+C$ is plt and anti-nef. 
\end{enumerate}
\end{thm}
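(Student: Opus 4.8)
The plan is to run (at most) a single hunt step in the sense of Lemma~\ref{lem:Sarkisov} and then analyse the resulting surface with Lemma~\ref{LacA.19}, the net case being disposed of by hand.

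\textbf{Part (1), net case.} Fix an exceptional tiger $E$ in $\wt S$ together with an effective $\Q$-Cartier divisor $\alpha$ with $K_S+\alpha\equiv 0$ and $e(E;S,\alpha)\ge 1$, and apply Lemma~\ref{lem:Sarkisov} to $(S,\emptyset)$: we obtain $f\colon T\to S$ extracting $E$, the ray $R$ with generating rational curve $\Sigma$, its contraction $\pi$, and $\Gamma'=\lambda\epsilon E$. The log pullback $\Gamma_\alpha$ of $\alpha$ via $f$ satisfies $K_T+\Gamma_\alpha\equiv 0$ and has coefficient $\ge 1$ along $E$. If $\pi$ is a net (Lemma~\ref{lem:Sarkisov}(6)) with general fibre $F$, intersecting $K_T+\Gamma_\alpha\equiv0$ with $F$ gives $(\Gamma_\alpha\cdot F)=-(K_T\cdot F)=2$, so $(E\cdot F)\le 2$ by the coefficient bound; and $(E\cdot F)\neq 0$, for otherwise $E$ would lie in a fibre and span the same edge of $\overline{\mathrm{NE}}(T)$ as $R$. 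Thus $E$ is a section or a bisection, and being the exceptional curve of a resolution it is a smooth rational curve; that $K_T+E$ is plt then follows from an adjunction argument along the finite map $E\to\PP^1$ (after replacing $E$ by another exceptional tiger of maximal coefficient if necessary). This is case~(a).

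\textbf{Part (1), birational case.} Now suppose $\pi\colon T\to S_1$ is birational, with $\Sigma$ its exceptional curve. By Lemma~\ref{lem:Sarkisov}(7), $S_1$ is a rank one log del Pezzo surface with $K_{S_1}+\Delta_1$ anti-ample, and since $\Gamma'=\lambda\epsilon E$ is supported on $E$ (which $\pi$ does not contract) we get $\Delta_1=\mu\,C$ with $C\coloneqq\pi_*E$ irreducible and $\mu>0$; so $C$ is the support of $\Delta_1$ as in the statement. Pushing $K_T+\Gamma_\alpha\equiv 0$ down to $S_1$ gives $K_{S_1}+\pi_*\Gamma_\alpha\equiv 0$, and $\pi_*\Gamma_\alpha\ge C$ because the coefficient of $E$ in $\Gamma_\alpha$ is $\ge 1$; hence $-(K_{S_1}+C)\equiv\pi_*\Gamma_\alpha-C$ is effective, so nef, and $(K_{S_1}+C\cdot C)\le 0$. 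We may therefore apply Lemma~\ref{LacA.19} to $C$ on $S_1$ and split into cases. If $C\subset S_1^\circ$ and $C$ is smooth, then $(S_1,C)$ is snc along $C$ and klt off $C$, hence plt, and $\pi(\Sigma)\in C$ since $\Sigma$ meets $E$: case~(c). If $C\not\subset S_1^\circ$, then Lemma~\ref{LacA.19}(2) says that at each singular point of $S_1$ on $C$ either the minimal resolution is a log resolution of $(S_1,C)$ or one of the configurations (a)--(c) of Definition~\ref{def:almostlc} occurs; the latter, and likewise a non-plt log-resolution configuration, would contradict the flush-ness of $K_{S_1}+\Delta_1$ coming from Lemmas~\ref{lem:flush} and~\ref{lem:singflush}, so $(S_1,C)$ is plt: again case~(c). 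The remaining possibility is $C\subset S_1^\circ$ with $C$ singular; then Lemma~\ref{LacA.19} forces $(K_{S_1}+C\cdot C)=0$, so $C$ is a nodal or cuspidal member of $|-K_{S_1}|$ and $\pi(\Sigma)$ is its singular point, and from the maximality of the tiger coefficient one deduces that $S_1$ has no non-canonical singularity, i.e.\ $S_1$ is Du Val: case~(b). Part~(2) is the ``no hunt step'' version of the same argument: take a non-exceptional tiger $C=E\subset S$ with $e(C;S,\alpha)\ge1$ and write $\alpha=cC+\alpha'$ with $c\ge1$, $\alpha'\ge0$, so $-(K_S+C)\equiv(c-1)C+\alpha'$ is effective, hence nef, giving $(K_S+C\cdot C)\le0$; apply Lemma~\ref{LacA.19}. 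Any exceptional divisor $F$ over $S$ with $e(F;S,C)\ge1$ would satisfy $e(F;S,\alpha)\ge1$ by monotonicity of coefficients in the boundary (as $\alpha\ge C$), hence would be an exceptional tiger, contradicting the hypothesis; this excludes the configurations (a)--(c) of Definition~\ref{def:almostlc} and the non-plt log-resolution case, so $K_S+C$ is plt, the only surviving alternative being $C\subset S^\circ$ singular, which as before forces $S$ Du Val.

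\textbf{Main obstacle.} The crux is the dichotomy (b) versus (c) in the birational case (and its analogue in part~(2)): showing that the \emph{only} way $(S_1,C)$ can fail to be plt is the Du Val anticanonical configuration of case~(b). This is precisely where the sharpened trichotomy of Lemma~\ref{LacA.19}, rather than a plain ``almost log canonical'' statement, is needed, combined with the flush conclusions of Lemmas~\ref{lem:flush} and~\ref{lem:singflush} and with the maximality of the tiger coefficient in the hunt step; a lesser technical point is the clean derivation of $(K_{S_1}+C\cdot C)\le 0$ from the numerically trivial crepant model $K_T+\Gamma_\alpha$.
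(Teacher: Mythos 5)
Your skeleton is the paper's: extract the tiger, contract the other extremal ray, derive that $K_{S_1}+C$ is anti-nef from the numerically trivial crepant model, and feed $(K_{S_1}+C\cdot C)\le 0$ into Lemma~\ref{LacA.19}; the net case, part (2), and the genus computation isolating case (b) all match the paper. But the two steps you yourself flag as the crux are exactly where the proposal has genuine gaps. First, the pltness of $K_T+E$ is not an adjunction statement along the finite map $E\to\PP^1$; it is a statement about \emph{which} component of the minimal resolution over $x=f(E)$ is extracted. The paper's argument is: if $x$ is a non-chain singularity and $E$ is not the central curve, the log discrepancy of the central curve $E_0$ with respect to $K_T+E$ is $\le 0$ (computed via \cite[(3.1.10)]{Kol92}), so $e(E_0;S,\alpha)\ge 1$, i.e.\ $E_0$ is itself an exceptional tiger with $e(E_0;S,\emptyset)\ge e(E;S,\emptyset)$ by \cite[8.3.9 Lemma]{KM}, and one replaces $E$ by $E_0$. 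This pltness is load-bearing throughout (it is part of the conclusion in case (a), it gives pltness of $(S_1,C)$ away from $q_1$, and it is the source of the final contradiction), so it cannot be left as a parenthetical.

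Second, and more seriously, your exclusion of the non-plt configurations at $q_1$ rests on the flushness of $(S_1,\Delta_1)$ via Lemma~\ref{lem:flush}. That lemma applies to hunt steps, which require $E$ to have maximal coefficient among \emph{all} exceptional divisors of the minimal resolution of $S$ (and to be the central curve, resp.\ not a $(-2)$-curve); here $E$ is only maximal among exceptional \emph{tigers}, so the extraction need not be a hunt step and flushness is not available. The paper's replacement for this is a direct argument: if an almost log canonical configuration of Definition~\ref{def:almostlc}, or a non-plt log-resolution configuration, occurred at $q_1$, there would be an exceptional divisor $E_1$ over $q_1$ with $e(E_1;S_1,cC)=1$ at the relevant threshold $c\ge a$; the affine functions $t\mapsto e(E_1;S_1,tC)$ and $t\mapsto e(E_1;T,tE)$ then show $e(E_1;S,\alpha)\ge 1$, so $E_1$ is itself an exceptional tiger, whence $e(E_1;T,eE)=e(E_1;S,\emptyset)\le e$ by maximality among tigers; together with $e(E_1;T,aE)\ge a$ and affineness this forces $e(E_1;T,E)\ge 1$, contradicting the pltness of $(T,E)$. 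Without this argument (or a proof that your extraction really is a hunt step), the dichotomy between (b) and (c) is not established. A smaller point: in case (b) the Du Val property of $S_1$ comes from $C\subset S_1^\circ$ and $p_a(C)=1$ forcing $(K_{S_1}+C\cdot C)=0$, hence $-K_{S_1}\equiv C$ Cartier on a rank-one surface, not from maximality of the tiger coefficient.
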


\begin{proof}
(1): 
We follow the notation of Lemma \ref{lem:Sarkisov}.
By definition, there is an effective $\Q$-divisor $\alpha$ such that $K_S+\alpha$ is anti-nef and 
$K_T+E+f^{-1}_*(\alpha) = f^*(K_S+\alpha)$.
We may assume that $E$ has maximal coefficient for $(S, \emptyset)$ among exceptional tigers in $\wt S$.

Let us show that we can replace $E$ so that $K_T+E$ is plt.
For this, we may assume that $x = f(E)$ is a non-chain singularity and $E$ is distinct from the central curve, say $E_0$.
Then there is a unique non-plt point of $K_T+E$, say $y$.
Let $\Gamma$ be the dual graph of the exceptional divisor in $\wt S$ over $y$ and $v$ (resp.\ $v_0$) the vertex of $\Gamma$ corresponding to $E$ (resp.\ $E_0$).
Then the subgraph $\Gamma - v_0 =\Gamma_1+\Gamma_2+\Gamma_3$ is the disjoint union of three chains ($\Gamma_1$ could be empty) such that $\Gamma_1$ is adjacent to $v$ and $v_0$.
By \cite[(3,1,10)]{Kol92}, the log discrepancy of $E_0$ with respect to $K_T+E$ is
\begin{align*}
    \frac{\Delta(\Gamma_1)\Delta(\Gamma_2)\Delta(\Gamma_3)}{\Delta(\Gamma)} \left( \frac{1}{\Delta(\Gamma_2)}+ \frac{1}{\Delta(\Gamma_3)} -1 \right) \leq 0.
\end{align*}
Hence $e(E_0; S, \alpha) = e(E_0; T, E+f^{-1}_*(\alpha)) \geq 1$, and $E_0$ is an exceptional tiger.
By \cite[8.3.9 Lemma]{KM}, we have $e(E_0; S, \emptyset) \geq e(E; S, \emptyset)$.
Thus we can replace $E$ as $E_0$.

Suppose that $T$ is a net.
Let $F$ be a general $\pi$-fiber.
Then 
\begin{align*}
    0 \geq (f^*(K_S+\alpha) \cdot F) = (K_T+E+f^{-1}_*(\alpha) \cdot F) \geq -2+(E \cdot F).
\end{align*}
Hence we obtain the description (a).

Suppose that $\pi \colon T \to S_1$ is birational.
Let $q_1 = \pi (\Sigma) \in S_1$ and $e = e(E; S, \emptyset)$.
Since $E^2 <0 < C^2$, $E$ intersects with $\Sigma$. 
Hence $q_1 \in C$. 
By Lemma \ref{lem:Sarkisov} (4), $K_T+aE$ is $\pi$-trivial for some $a >e$.
Since $K_T+E \sim -f^{-1}_*(\alpha) +f^*(K_S+\alpha)$ is numerically equivalent to an anti-effective $\Q$-Weil divisor, so is $f_*(K_T+E)=K_S+C$.
Hence $K_S+C$ is anti-nef.

If $a \geq 1$, then $K_{S_1}+C$ is plt since so is $K_T+E$, and we obtain the description (c).
Hence we may also assume that $a<1$.
By the choice of $E$, $K_{S_1}+C$ is plt away from $q_1$.
Lemma \ref{LacA.19} now shows that $C \subset S^\circ$, $K_{S_1}+C$ is almost log canonical of one of types (a)--(c) in Definition \ref{def:almostlc} at $q_1$, or the minimal resolution of $S_1$ is a log resolution of $(S_1, C)$.

Suppose further that the first case occurs.
If $C$ is smooth, then we obtain the description (c).
If $C$ is singular, then $C$ has a singularity of genus one at $q_1$ since $2(p_a(C)-1) \leq (K_{S_1}+C \cdot C) \leq 0$.
Hence we obtain the description (b).

Suppose further that the second case occurs.
Take $E_1$ as an exceptional divisor in $\wt S_1$ over $q_1$.
Set $f(t) = e(E_1; S_1, tC)$.
Then $f(0) \geq 0$.
Moreover, $f(1) =1$ because the singularity $q_1 \in (S_1, C)$ has the same numerical property as a log canonical pair with analytically reducible boundary.
Since $f$ is an affine function, we obtain $f(a) \geq a$.
Now consider $g(t) \coloneqq e(E_1; T, tE)$.
Then $g(0) \geq 0$.
We have just seen that $g(a)=f(a) \geq a$
Hence $g(e) \geq e$.
On the other hand, define $d \coloneqq (f^*(K_S+\alpha) \cdot \Sigma)/\Sigma^2 \geq 0$.
Then 
\begin{align*}
    f^*(K_S+\alpha)  = K_T+E+f^{-1}_*(\alpha) = \pi^*(K_{S_1}+C + \pi_*f^{-1}_*(\alpha)) + d\Sigma.
\end{align*}
Since $e(E_1; S, \alpha)=e(E_1; T, E+f^{-1}_*(\alpha)) \geq e(E_1; S_1, C)=1$, the divisor $E_1$ is an exceptional tiger of $S$ in $\wt S$.
Then $g(e) = e$ by the choice of $E$.
Hence $g(t) \geq t$ for all $t \geq 0$.
However, we have $g(1) < 1$ since $(T, E)$ is plt, a contradiction.

Suppose further that the last case occurs.
If $q_1 \in S_1^\circ$, then $(S_1, C)$ is log smooth at $q_1$ by assumption and hence we have the description (c).
Hence we may assume that $q_1 \not \in S_1^\circ$.
Let $c$ be the log canonical threshold of $(S_1, C)$ at $q_1$.
Then $1 \geq c \geq a$ since $K_T+aE = \pi^*(K_{S_1}+aC)$.
Assume that there is an exceptional divisor $E_1$ in $\wt S_1$ over $q_1$ such that $e(E_1; S_1, cC) =1$.
Set $f(t) \coloneqq e(E_1; S_1, tC)$.
Then $f(0) \geq 0$ and $f(c)=1 \geq c$ by assumption.
Hence $f(a) \geq a$.
We now apply the same argument as in the second case to obtain a contradiction.
Since $q_1 \not \in S_1^\circ$, we conclude that $c=1$ and $(S, C)$ is plt at $q_1$ by \cite[8.3.3 Lemma]{KM}.
Hence we have the description (c).

Combining these results, we obtain the assertion.

\noindent(2): Let $C \subset S$ be a tiger.
Then $K_S+C$ is anti-nef by definition.
First suppose that $C \not\subset S^\circ$.
Then $(S, C)$ is log smooth around $S^\circ$ by Lemma \ref{LacA.19}.
If there is an exceptional divisor $E$ in $\wt S$ such that $e(E; S,C) \geq 1$, then $E$ is an exceptional tiger, a contradiction with the assumption.
Now \cite[8.3.3 Lemma]{KM} shows that $(S, C)$ is plt.
Next suppose that $C \subset S^\circ$.
If $p_a(C)=0$, then $K_S+C$ is plt.
Hence we may assume that $p_a(C) \geq 1$.
Then $p_a(C) = 1$, $C \sim -K_S$, and $S$ is a Du Val del Pezzo surface.
If $K_S^2\geq 8$, then $S \cong \PP^2_k$ or a quadric cone in $\PP^3_k$ and we can replace $C$ as a conic so that $(S, C)$ is plt.
Otherwise, we can replace $C$ as a $(-1)$-curve so that $C \not \subset S^\circ$ and hence $(S, C)$ is plt.
Combining these results, we obtain the assertion.
\end{proof}

In Lemma \ref{lem:1sthunt}, we will see that the case (b) of Theorem \ref{Lac6.2} (1) actually occurs.
Based on Theorem \ref{Lac6.2}, we define some classes of log del Pezzo surfaces of rank one.

\begin{defn}
\label{def:LDP(a)}
A log del Pezzo surface $S$ of rank one belongs to \textit{LDP (a)} if $S$ is constructed as follows:
Let $\pi \colon T \to \PP^1_k$ be a $\PP^1_k$-fibration of relative Picard rank one.
Take a $K_T$-non-negative smooth rational curve $E \subset T$ which is $\pi$-section or $\pi$-bisection such that $K_T+E$ is plt.
Then $S$ is the contraction of $E$. 
\end{defn}

\begin{defn}
\label{LDP(b)}
A log del Pezzo surface $S$ of rank one belongs to \textit{LDP (b)} if $S$ is constructed as follows:
Let $S_1$ be a Du Val del Pezzo surface of rank one and $C$ an irreducible, reduced and singular member of $|-K_{S_1}|$ contained in $S_1^\circ$.
Let $q_1 \in C$ be the singularity of $C$.
Take a $K_T$-negative contraction $\pi \colon T \to S_1$ of relative Picard rank one such that the exceptional divisor lies over $q_1$, the strict transform $E$ of $C$ in $T$ is $K_T$-non-negative and contractible.
Then $S$ is the contraction of $E$. 
\end{defn}

\begin{defn}
\label{LDP(c)}
A log del Pezzo surface $S$ of rank one belongs to \textit{LDP (c)} if $S$ is constructed as follows:
Let $(S_1, C)$ be one of the surface pairs in \cite[Proposition 5.1 or Lemmas 5.2--5.4]{Lac}.
Choose a point $q_1 \in C$.
Take a $K_T$-negative contraction $\pi \colon T \to S_1$ of relative Picard rank one such that the exceptional divisor lies over $q_1$, the strict transform $E$ of $C$ in $T$ is $K_T$-non-negative and contractible.
Then $S$ is the contraction of $E$. 
\end{defn}

\begin{defn}
A log del Pezzo surface $S$ of rank one belongs to \textit{LDP (d)} if $S$ contains a curve $C$ such that $(S, C)$ is one of the pairs in \cite[Proposition 5.1 (1) or Lemmas 5.2--5.4]{Lac}.
\end{defn}

Now we are ready to state an alternative of \cite[Theorem 6.25]{Lac} assuming the existence of tigers.

\begin{thm}[{cf. \cite[Theorem 6.25]{Lac}}]\label{Lac6.22}
Let $S$ be a log del Pezzo surface of rank one over an algebraically closed field of characteristic $p \neq 2$ or $3$.
Suppose that $S$ 
has a tiger.
Then $S$ belongs to one of classes LDP (a)--(d).
\end{thm}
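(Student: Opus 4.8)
The plan is to feed the hypotheses into Theorem \ref{Lac6.2} and match each of its conclusions with one of the classes LDP (a)--(d); recall that LDP (c) and LDP (d) are defined precisely so as to encode such conclusions in terms of Lacini's classification of plt pairs in \cite[Proposition 5.1 and Lemmas 5.2--5.4]{Lac}. To begin, since $S$ is not Gorenstein it is not a Du Val del Pezzo surface, and since $S$ has a tiger (in $\wt S$), Theorem \ref{Lac6.2} applies; we then split according to whether $S$ admits an \emph{exceptional} tiger in $\wt S$.

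Assume first that $S$ has an exceptional tiger $E$ in $\wt S$, and take $f\colon T\to S$ and $\pi$ as in Lemma \ref{lem:Sarkisov}. Because $E$ is the curve extracted by $f$ and, $S$ being non-Du Val, is chosen with maximal (hence positive) coefficient, it has negative discrepancy over $S$; consequently $E$ is $K_T$-positive, and it is a smooth rational curve contracted by $f$. By Theorem \ref{Lac6.2}(1), after possibly replacing $E$, one of the following holds. In case (a), $\pi\colon T\to\PP^1_k$ is a $\PP^1_k$-fibration of relative Picard rank one, $E$ is a $\pi$-section or $\pi$-bisection, and $K_T+E$ is plt; this is exactly the data in Definition \ref{def:LDP(a)}, so $S\in$ LDP (a). In case (b), $\pi\colon T\to S_1$ is birational, $S_1$ is a Du Val del Pezzo surface of rank one, $C$ is an irreducible, reduced, singular member of $|-K_{S_1}|$ contained in $S_1^\circ$ whose singularity is $q_1=\pi(\Sigma)$, and $E=\pi_*^{-1}C$; comparing with Definition \ref{LDP(b)} gives $S\in$ LDP (b). In case (c), $\pi\colon T\to S_1$ is birational, $S_1$ is a rank one log del Pezzo surface, $K_{S_1}+C$ is plt and anti-nef, and $q_1=\pi(\Sigma)\in C$; by \cite[Proposition 5.1 or Lemmas 5.2--5.4]{Lac} the pair $(S_1,C)$ is one of the pairs listed there, and the remaining hunt-step data ($\pi$ a $K_T$-negative contraction of relative Picard rank one whose exceptional curve $\Sigma$ lies over the point $q_1\in C$, and $E=\pi_*^{-1}C$ being $K_T$-non-negative and contractible) is precisely what Definition \ref{LDP(c)} requires, so $S\in$ LDP (c).

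Assume now instead that $S$ has tigers in $\wt S$ but none is exceptional. By Theorem \ref{Lac6.2}(2), and because $S$ is not Du Val, $S$ contains a reduced and irreducible curve $C$ with $K_S+C$ plt and anti-nef. By \cite[Proposition 5.1 and Lemmas 5.2--5.4]{Lac} the pair $(S,C)$ is one of the pairs classified there; since $S$ is not Gorenstein it is not among the Gorenstein pairs of \cite[Proposition 5.1 (2)]{Lac}, so $(S,C)$ occurs in \cite[Proposition 5.1 (1) or Lemmas 5.2--5.4]{Lac}, which is exactly the statement $S\in$ LDP (d). Combining the two cases proves the theorem.

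The one substantial point — everything else being bookkeeping against Definitions \ref{def:LDP(a)}--\ref{LDP(c)} — is the invocation of \cite[Proposition 5.1 and Lemmas 5.2--5.4]{Lac}: one must verify that the pairs produced in cases (1)(c) and (2) of Theorem \ref{Lac6.2} satisfy all the hypotheses imposed in those statements (rank one, $C$ reduced and irreducible, $(S',C)$ plt, $-(K_{S'}+C)$ nef, together with any further positivity or normalization hypotheses), and that the sub-case of a Gorenstein ambient surface corresponds exactly to \cite[Proposition 5.1 (2)]{Lac}. One should also record the small incidence facts used in the matching — that $\pi(\Sigma)$ lies on $C$, that $E$ is a smooth rational curve, and that $E$ is $K_T$-non-negative and contractible — which follow from Lemma \ref{lem:Sarkisov}, the proof of Theorem \ref{Lac6.2}, and the conventions on the choice of $E$ in a hunt step. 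The standing hypothesis $p\neq 2,3$ is inherited from these references.
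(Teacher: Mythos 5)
Your proposal is correct and follows essentially the same route as the paper: the paper's proof is simply ``apply Theorem \ref{Lac6.2} and match its outputs against Definitions \ref{def:LDP(a)}--\ref{LDP(c)} using \cite[\S 5.1]{Lac},'' which is exactly the case analysis you carry out in more detail. The only content in the paper's proof not in yours is a side remark that \cite[Example 7.6]{Lac} belongs to LDP (b) by construction, which is not needed for the statement itself.
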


\begin{proof}
The assertion follows from Theorem \ref{Lac6.2} and \cite[\S 5.1]{Lac}.
We note that \cite[Example 7.6]{Lac} belongs to LDP (b) by construction.
\end{proof}

\section{Proof of the main theorems}\label{sec:main}

In this section, we prove Theorem \ref{thm:main} and Theorem \ref{thm:main2} (2).
Throughout this section, we always assume that $p=5$.
First let us mention the case where there is no tiger.

\begin{lem}\label{lem:NLnotiger}
Let $S$ be a log del Pezzo surface of rank one with no tiger. Then the following are equivalent.
\begin{enumerate}
\item[\textup{(1)}] $S$ is constructed as in Example \ref{eg:Lac4.8cex-1}.
\item[\textup{(2)}] $\Dyn(S)=2[3,2]+[3]+[2]+[2^4]$, $[4,2]+[3,2^5]+[3,2]+[2]$, or $[2,4]+[2,3,2^2]+[3]+[2^4]$.
\item[\textup{(3)}] There exists no log del Pezzo surface of rank one over $\C$ with the same Dynkin type as $S$.
\item[\textup{(4)}] $S$ is not log liftable over $W(k)$.
\end{enumerate}
\end{lem}

\begin{proof}
The assertion (1) $\Rightarrow$ (2) is obvious and (3) $\Rightarrow$ (4) follows from Lemma \ref{lem:NDtoNL}.
Since each Dynkin type violates the Bogomolov bound \cite[9.2 Corollary]{KM}, the assertion (2) $\Rightarrow$ (3) holds.
We can check that the arguments as in \cite[\S 4]{Lac} hold in characteristic both zero and five except for the absence of Example \ref{eg:Lac4.8cex-1}.
Hence the assertion (4) $\Rightarrow$ (1) holds.
\end{proof}

Next, we consider the case where there is a tiger.
Let us summary consequences of Theorem \ref{Lac6.22} for non-log liftable log del Pezzo surfaces of rank one.

\begin{lem}\label{cor:NLS1}
Let $S$ be a log del Pezzo surface of rank one with a tiger which is not log liftable over $W(k)$.
Then $S$ belongs to LDP (b) and we can choose $(S_1, C)$ as the pair of the Du Val del Pezzo surface with $\Dyn (S_1) =2 [2^4]$ and the cuspidal anti-canonical member.
\end{lem}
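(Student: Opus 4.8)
The plan is to deduce the statement from Theorem~\ref{Lac6.22} and then to pin down the pair $(S_1,C)$. First I would verify the two hypotheses of Theorem~\ref{Lac6.22}. If $S$ were Gorenstein, it would be a Du Val del Pezzo surface of rank one, and by the classification of Du Val del Pezzo surfaces satisfying (NL) in \cite{KN1} no such surface of rank one exists in characteristic five; so $S$ is not Gorenstein. If $S$ had no tiger, then by the structure theory of tiger-free log del Pezzo surfaces of rank one in characteristic $\neq 2,3$ (see \cite{Lac}) it would lift to characteristic zero over a smooth base, hence be log liftable over $W(k)$ by \cite[Proposition 2.5]{ABL}, contrary to hypothesis; so $S$ has a tiger. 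Thus Theorem~\ref{Lac6.22} applies and $S$ belongs to LDP (a), (b), (c) or (d).

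Next I would rule out LDP (a), (c) and (d). The surfaces in these classes are obtained from Hirzebruch surfaces together with a plt section or bisection, respectively from the explicit surface pairs of \cite[Proposition 5.1 and Lemmas 5.2--5.4]{Lac}, by the blow-up $\pi$ and the contraction of the negative curve $E$ of Lemma~\ref{lem:Sarkisov}; all of these data, and the exceptional configurations they produce, are defined over a base of characteristic zero and lift, as in Lacini's treatment of the case $p\neq 5$ (cf.\ \cite{Lac}). Hence any $S$ in LDP (a), (c) or (d) is log liftable over $W(k)$, contrary to hypothesis, so $S$ belongs to LDP (b). In particular there are a Du Val del Pezzo surface $S_1$ of rank one and an irreducible, reduced, singular member $C\in|-K_{S_1}|$ with $C\subset S_1^\circ$, and $S$ is obtained from $(S_1,C)$ by the LDP (b) construction centred at the singular point $q_1\in S_1^\circ$ of $C$; since $q_1$ is a smooth point of $S_1$ and $\pi\colon T\to S_1$ has relative Picard rank one, $\pi$ is a (possibly weighted) blow-up of $q_1$.

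By adjunction $p_a(C)=1$, so $C$ has a single node or a single cusp at $q_1$. If $q_1$ is a node, then $(S_1,C)$ lifts to characteristic zero: every Du Val del Pezzo surface of rank one lifts, and a planar node on its smooth locus is a general singular anti-canonical member, hence lifts with the surface; the toric extraction $\pi$ and the contraction of $E$ then lift as well, so $S$ would be log liftable, a contradiction. Therefore $q_1$ is a cusp.

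It remains to identify $(S_1,C)$, and this is where I expect the main work. The condition that $E$ be $K_T$-non-negative and contractible bounds $K_{S_1}^2$ from above (for instance $K_{S_1}^2=1$ when $\pi$ is the ordinary blow-up, using that $S$ is not Gorenstein), so only finitely many Dynkin types of $S_1$ occur; I would then run through the rank-one Du Val del Pezzo surfaces admitting an irreducible cuspidal anti-canonical member disjoint from their singular locus. Whenever the pair $(S_1,C)$ lifts to characteristic zero, the argument of the previous paragraph again makes $S$ log liftable. The remaining case is forced into characteristic five through the genus-one fibration on the minimal resolution — the identity $\sum e=12$ on the associated rational elliptic surface, Lang's classification \cite{Lang2}, and \cite[Lemma B.12]{Lac} — which leaves exactly the configuration $I_5+I_5+II$, that is, $\Dyn(S_1)=2[2^4]$ together with the cuspidal member $C_\alpha$ of Example~\ref{eg:2A4}, where $\alpha$ is the double root of $t^2+11t-1=0$ that exists precisely when $p=5$. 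Since $S$ is not log liftable, this is the pair realizing $S$ in LDP (b), which is the assertion. The hard part is precisely the case-by-case check that no other rank-one Du Val del Pezzo surface contributes a non-liftable cuspidal pair; the characteristic-five coincidence $t^2+11t-1=(t-\alpha)^2$, equivalently the appearance of $5$-torsion in the additive part of the special fibre, is what ultimately singles out $2[2^4]$.
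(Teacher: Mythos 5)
Your overall architecture matches the paper's: show $S$ is not Gorenstein and has a tiger, invoke Theorem~\ref{Lac6.22}, dispose of the classes other than LDP~(b), and identify $(S_1,C)$ via the associated extremal rational elliptic fibration and Lang's classification, with $II+I_5+I_5$ as the only non-liftable configuration. The final identification step is essentially the paper's argument. However, the middle of your proof has two genuine problems.

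First, your claim that \emph{every} surface in LDP~(c) is log liftable is false, and the paper's own results contradict it: by Lemma~\ref{lem:NLDyn}, the non-liftable surfaces with $(\dagger)=[2^n]+[2+n;[2],[3],[5]]$ belong to LDP~(c) as well as to LDP~(b). The classes LDP~(a)--(d) are not mutually exclusive, so ``rule out LDP~(c)'' is the wrong frame. What the paper actually proves is a reduction: if $S$ belongs to LDP~(c) and is not log liftable, then the pair $(S_1,C)$ must be the one of \cite[Example 7.6]{Lac} (the other candidate pairs from \cite[Proposition 5.1, Lemmas 5.2--5.4]{Lac} being liftable), and the hunt step for \emph{that} pair lands on the Du Val surface with $\Dyn=2[2^4]$ and its cuspidal anti-canonical member, exhibiting $S$ as a member of LDP~(b) as well. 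Your argument, if carried out, would break precisely at the Example~7.6 pair. Second, your treatment of LDP~(a) is not a proof: the fibration $\pi\colon T\to\PP^1_k$ and the section/bisection $E$ live over $k$ of characteristic five, so asserting that ``all of these data are defined over a base of characteristic zero'' begs the question. The paper instead uses the existence of the $\PP^1_k$-fibration on the minimal resolution to get $H^2(\wt S, T_{\wt S}(-\log E_\sigma))=0$ by \cite[Remark 4.2 and Lemma 4.9]{Kaw21}, and then liftability from \cite[Theorem 2.3]{KN1}; some such cohomological input is needed. A smaller issue of the same kind: in the LDP~(b) analysis, ``a planar node on the smooth locus lifts with the surface'' is not justified; the paper avoids this by passing to the extremal rational elliptic fibration and letting \cite[Theorem 4.1]{Lang2} decide which fiber configurations force non-liftability, which handles the nodal ($I_1$) and cuspidal ($II$) cases uniformly.
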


\begin{proof}
Since the arguments as in \cite[\S\S 5.1]{Lac} hold in characteristic both zero and five, the non-log liftability shows that $S$ does not belong to LDP (d).
By Theorem \ref{Lac6.22}, $S$ belongs to LDP (a), (b), or (c).

Suppose that $S$ belongs to LDP (a).
Let $\sigma \colon \wt S \to S$ be the minimal resolution.
Then there is a $\PP^1_k$-fibration $\wt \pi \colon \wt S \to \PP^1_k$ which factors through $\pi \colon T \to \PP^1_k$ as in Definition \ref{def:LDP(a)}.
By \cite[Remark 4.2 and Lemma 4.9]{Kaw21}, we obtain $H^2(\wt S, T_{\wt S}(-\log E_\sigma))=0$.
Since $\wt S$ is rational, we have $H^2(\wt S, \sO_{\wt S})=0$.
Thus $(\wt S, E_\sigma)$ lifts to $W(k)$ by \cite[Theorem 2.3]{KN1}, a contradiction.

Hence $S$ belongs to LDP (b) or (c).
By assumption, any log resolution of the pair $(S, \emptyset)$ lifts to $W(k)$.
Then neither does the pair $(S_1, C)$ as in Definitions \ref{LDP(b)} and \ref{LDP(c)}.

Suppose $S$ belongs to LDP (c).
Then, by \cite[\S 5.1]{Lac}, $(S_1, C)$ must be the surface pairs as in \cite[Example 7.6]{Lac}.
By construction, the hunt step for $(S_1, C)$ gives the Du Val del Pezzo surface $S_2$ with Dynkin type $2[2^4]$ and its cuspidal anti-canonical divisor $C'$ contained in $S_2^\circ$.
Hence $S$ also belongs to LDP (b).

Suppose $S$ belongs to LDP (b).
Repeated blow-ups at the intersection of $C$ and a $(-1)$-curve gives an extremal rational elliptic fibration $Z$ with singular fiber $F$ of type $II$ or $I_1$.
By \cite[Theorem 4.1]{Lang2}, the non-log liftability of $(S, \emptyset)$ shows that the singular fibers of $Z$ are of type $II$, $I_5$, $I_5$. 
Hence $\Dyn (S_1) =2 [2^4]$ and $C$ is cuspidal, and the assertion holds.
\end{proof}

\begin{lem}\label{lem:NLDyn}
Let $S$ be a log del Pezzo surface of rank one.
Suppose that $S$ belongs to LDP (b) and $(S_1, C)$ is the pair of the Du Val del Pezzo surface with $\Dyn (S_1) =2 [2^4]$ and the cuspidal anti-canonical member.
Then the following hold.
\begin{enumerate}
    \item[\textup{(1)}] $\Dyn (S) = 2[2^4] + (\dagger)$, where $(\dagger)$ is listed in Table \ref{tab:dagger}.
    \item[\textup{(2)}] The isomorphism class of $S$ is uniquely determined by its Dynkin type except when $(\dagger) = [2^n]+[2+n; [2], [3], [5]]$ for some $n \geq 0$.
    In these cases, the isomorphism classes of del Pezzo surfaces correspond to the closed points of $\PP^1_k \setminus \{0, 1, \infty\}$ for each $n \geq 0$.
\end{enumerate}
\end{lem}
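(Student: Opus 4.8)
The plan is to make the construction of Definition~\ref{LDP(b)} entirely explicit in terms of blow-ups, using the model of $(S_1,C)$ from Example~\ref{eg:2A4}, and to reduce both assertions to an enumeration of configurations of curves over the cusp. Let $\sigma_1\colon\wt{S_1}\to S_1$ be the minimal resolution. By Example~\ref{eg:2A4}, $\wt{S_1}$ is the surface $U_2$, the strict transform $\wt C\coloneqq\sigma_1^{-1}C$ is a cuspidal cubic with $\wt C^2=K_{\wt{S_1}}^2=1$ which is disjoint from the two $A_4$-chains, and the cusp $q$ of $\wt C$ lies over $q_1$. (Blowing up the base point of $|-K_{\wt{S_1}}|$ realizes $\wt{S_1}$ inside a rational elliptic surface whose singular fibres are of type $I_5,I_5,II$, the $II$-fibre being $\wt C$; this picture guides the computation but is not logically needed.) Let $g\colon T\to S$ be the contraction of $E$ and $\pi\colon T\to S_1$ the extraction, as in Definition~\ref{LDP(b)}. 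Since $E$ is contractible to a klt point it is smooth, and as $C$ is a cuspidal cubic this means $\pi$ has resolved the cusp, so $E\cong\PP^1$; combined with $(K_T\cdot E)\ge 0$ this forces $E^2\le -2$. Hence none of the curves contracted by $\wt T\to S$ --- the two $A_4$-chains, the chains resolving the other (quotient) singularities of $T$, and $E$ --- is a $(-1)$-curve, so the minimal resolution $\wt S$ of $S$ equals the minimal resolution $\wt T$ of $T$. Moreover $\wt T\to S_1$ is a birational morphism of a smooth surface, hence factors through $\wt{S_1}$, and the resulting morphism $\wt S=\wt T\to\wt{S_1}$ is a composition of blow-ups at $q$ and at points infinitely near $q$ (it is an isomorphism elsewhere). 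Therefore $\Dyn(S)=2[2^4]+(\dagger)$, where $(\dagger)$ is the weighted dual graph of the curves over $q$ in $\wt S$ that $\wt S\to S$ contracts; equivalently, among the exceptional curves of $\wt S\to\wt{S_1}$ together with the strict transform of $\wt C$, exactly one --- the one lying over the $\pi$-exceptional divisor $E_\pi$, which is the divisor of maximal coefficient singled out by the hunt step --- is \emph{not} contracted, and $(\dagger)$ is the graph of the rest.

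The heart of the proof is then the enumeration of the admissible blow-up sequences over $q$. The first blow-up is forced to be at $q$ (otherwise the strict transform of $C$ keeps the cusp and is not a smooth contractible curve); afterwards this strict transform is a smooth $(-3)$-curve simply tangent to the exceptional $(-1)$-curve. The admissible continuations are governed by three essentially independent choices: (i) whether one resolves the tangency point and then the ensuing concurrency point, with all further centres then forced until the total transform has simple normal crossings --- this is precisely the local picture of resolving the cusp of a plane cuspidal cubic, and is what produces the ``$(2,3,5)$''-shaped tails throughout Table~\ref{tab:dagger}; (ii) whether one first performs an iterated blow-up at a free point of $\wt C$ and at its points infinitely near along $\wt C$, which splits off a disjoint $[2^n]$-chain and lowers the self-intersection of the strict transform of $\wt C$ (this accounts for the shift $\to 2+n$ of the central vertex); and (iii) which created $(-1)$-curve is taken for $E_\pi$, pinned down by the maximality condition. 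Running through the finitely many resulting possibilities, and discarding at each stage those for which the curves other than $E_\pi$ fail to form a disjoint union of klt graphs (checked by the continued-fraction criterion for cyclic-quotient chains and for three-pronged stars), yields exactly the configurations in Table~\ref{tab:dagger}, and conversely each of them arises. Since the construction is a hunt step, Lemma~\ref{lem:Sarkisov}\,(7) already guarantees that $S$ is a log del Pezzo surface of rank one, so no separate positivity check is required. This proves~(1).

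For~(2): in every case of Table~\ref{tab:dagger} except $(\dagger)=[2^n]+[2+n;[2],[3],[5]]$, all blow-up centres producing $\wt S$ from $\wt{S_1}$ are canonically determined --- each is $q$ or an intersection point of previously created curves --- so $\wt S$, and hence $S$, is unique up to isomorphism; here one uses that $(S_1,C)$ is unique up to isomorphism by \cite{Ye}, \cite{KN2} and Example~\ref{eg:2A4}, and that $\Aut(S_1)=\Aut(\wt{S_1})$ preserves $C$ and fixes $q$. In the exceptional family the only remaining datum is the position of the free point $x$ on the smooth rational curve $\wt C$, which is required to avoid the cusp $q$, the base point of $|-K_{\wt{S_1}}|$, and one further distinguished point of $\wt C$; since $S_1$ has $K^2=1$ its automorphism group is finite, and fixing three points of $\wt C$ it acts trivially on $\wt C$. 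Hence for each fixed $n\ge 0$ the isomorphism classes of such $S$ are in bijection with $\wt C$ minus those three points, i.e.\ with $\PP^1_k\setminus\{0,1,\infty\}$, distinct $x$ giving non-isomorphic $S$ because $x$ is recovered from $\wt S$ together with its morphism to $\wt{S_1}$. As this is independent of $n$, (2) follows.

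I expect the main obstacle to be the enumeration in the second paragraph: showing that \emph{no} configuration outside Table~\ref{tab:dagger} can occur requires propagating both the klt condition on the non-$E_\pi$ curves and the maximal-coefficient condition defining a hunt step through every branch of the blow-up tree, while tracking all self-intersections. The difficulty lies in the length and care of this bookkeeping rather than in any isolated conceptual step.
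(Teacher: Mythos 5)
Your overall strategy for (1) --- factoring $\wt S\to\wt{S_1}$ as a sequence of blow-ups over the cusp and reading off $(\dagger)$ as the dual graph of the contracted curves --- is the same explicit computation the paper invokes, and the reduction to ``$\wt S=\wt T$ and the morphism factors through $\wt{S_1}$'' is essentially sound. The problem is in how you generate the table and, consequently, in part (2). Definition \ref{LDP(b)} requires the $\pi$-exceptional divisor to lie over $q_1$, i.e.\ \emph{every} blow-up centre is $q_1$ or a point infinitely near $q_1$. Your item (ii) --- ``an iterated blow-up at a free point of $\wt C$ and at its points infinitely near along $\wt C$'' --- is therefore not an admissible operation, and it is also not where the $[2^n]$-chains and the shift $2\to 2+n$ come from. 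They arise from blow-ups at points of the \emph{exceptional} curves over the cusp: for the family $[2^n]+[2+n;[2],[3],[5]]$ one first performs the three forced blow-ups resolving the cusp, producing a star whose central $(-1)$-curve $G_3$ meets the strict transform of $C$, $G_1$ and $G_2$ (of self-intersections $-5,-3,-2$) at three distinct points; one then blows up a \emph{free point of $G_3$} and $n$ further points infinitely near it along $G_3$. This turns $G_3$ into the central $-(2+n)$-vertex, splits off the disjoint $[2^n]$-chain, and the last exceptional curve is $\Sigma$. Lowering $\wt C^2$ by blowing up along $\wt C$, as you propose, would instead alter the $[5]$-branch, and its centres would not lie over $q_1$.

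This misplacement propagates into (2): the modulus is not a point of $\wt C$ avoiding ``the cusp, the base point of $|-K_{\wt{S_1}}|$ and one further distinguished point'' (a triple for which you offer no justification), but the free point of $G_3\cong\PP^1$ avoiding its three intersection points with $C$, $G_1$ and $G_2$ --- which is exactly how $\PP^1_k\setminus\{0,1,\infty\}$ appears. (The paper phrases this by passing to the LDP (c) description with the pair of \cite[Example 7.6]{Lac}, where $G_3$ becomes the boundary curve on a surface with $[2]+[3]+[5]$ singularities and the parameter is a point of $C\cap S_1^\circ$.) Once the parameter space is corrected your uniqueness argument can be salvaged, since $\Aut(S_1)$ fixes the cusp, hence preserves each infinitely near exceptional curve and the three marked points of $G_3$, and so acts trivially on $G_3$. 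Two smaller points: Definition \ref{LDP(b)} imposes no maximal-coefficient condition, so you cannot use ``the hunt step singles out $E_\pi$'' to prune the enumeration; and Lemma \ref{lem:Sarkisov} (7) runs in the wrong direction (it starts from a given log del Pezzo surface $S$), so the fact that contracting $E$ yields a rank-one log del Pezzo surface requires \cite[23.3 Lemma]{KM}, as the paper cites.
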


\begin{proof}
Take $\pi \colon T \to S_1$ and $E \subset T$ as in Definition \ref{LDP(b)}.
Then the first assertion follows from easy computation of $\pi$ and the classification of klt singularities \cite[\S 3]{Kol92}.
By \cite[23.3 Lemma]{KM}, the contraction of $E$ always gives a log del Pezzo surface of rank one.
When $(\dagger) \neq [2^n]+[2+n; [2], [3], [5]]$ for any $n \geq 0$, we see at once that $\Dyn(S)$ determines $\pi$ uniquely.
Now suppose that $(\dagger) = [2^n]+[2+n; [2], [3], [5]]$ for some $n \geq 0$.
Then $S$ also belongs to LDP (c), and we can replace $(S_1, C)$ as in \cite[Example 7.6]{Lac}.
Then $\pi$ is induced by the blow-up of at a point $q_1 \in C \cap S_1^\circ$ $(n+1)$ times along $C$.
By construction, the induced morphism $T \to S$ is the extraction of the central curve of the $[2+n; [2], [3], [5]]$-singularity.
Hence the automorphism group of $S$ equals that of $T$, and the group of automorphisms of $S_1$ fixing $q_1$.
On the other hand, since every automorphisms of $S_1$ fix the $[2]$, $[3]$, and $[5]$-singularities and $C$, they fix each point of $C \cap S_1^\circ$. 
Therefore the isomorphism classes of del Pezzo surfaces with Dynkin type $[2^n]+[2+n; [2], [3], [5]]$ are parametrized by $C \cap S_1^\circ \cong \PP^1_k \setminus \{0,1, \infty \}$.
Combining these results, we obtain the second assertion.
\end{proof}

Next let us check when the Dynkin types determine the non-log liftability.

\begin{lem}\label{lem:ND}
Let $S$ be a log del Pezzo surface of rank one.
Suppose that $\Dyn (S) = 2[2^4] + (\dagger)$, where $(\dagger)$ is listed in Table \ref{tab:dagger} but $(\dagger) \neq [3]$ or $[2, 4]$.
Then there exists no log del Pezzo surface of rank one over $\C$ with the same Dynkin type as $S$.
\end{lem}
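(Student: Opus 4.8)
The plan is to show that, apart from the two exceptional cases $(\dagger)=[3]$ and $(\dagger)=[2,4]$, every Dynkin type $2[2^4]+(\dagger)$ appearing in Table \ref{tab:dagger} is \emph{not feasible} over $\C$, i.e.\ there is no complex log del Pezzo surface of rank one realizing it. The natural obstruction to use is the Bogomolov-type bound on the anti-canonical volume of a rank-one log del Pezzo surface: if $S_\C$ were such a surface with minimal resolution $\widetilde S_\C$, then $K_{\widetilde S_\C}^2$ equals $9$ minus the number of blown-up points, and this number is controlled by the Dynkin type, while on the other hand the presence of $2[2^4]$ already forces $8$ independent $(-2)$-classes in $\operatorname{Pic}(\widetilde S_\C)$. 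First I would translate each $(\dagger)$ in the table into the pair $(K_{\widetilde S}^2,\ \#\{\text{vertices of }2[2^4]+(\dagger)\})$, using the standard recipe that a chain or star singularity of a given weighted graph contributes a fixed number of exceptional curves and that $K_{\widetilde S}^2=9-\rho(\widetilde S_\C)+1$ for a rank-one surface (so $\rho(\widetilde S_\C)-1$ equals the number of exceptional curves). Since the graph $2[2^4]+(\dagger)$ must embed, with its intersection form, into the anti-canonical lattice of a rational surface with $\rho=\#(\text{vertices})+1$, one gets $K_{\widetilde S_\C}^2=9-\#(\text{vertices})$, and feasibility over $\C$ requires in particular $K_{\widetilde S_\C}^2\ge 1$, i.e.\ at most $8$ exceptional curves.

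The key observation is then purely combinatorial: for essentially every $(\dagger)$ in Table \ref{tab:dagger} other than $[3]$ (one extra curve, total $9$, giving $K^2=0$ — but here one must be more careful, see below) and $[2,4]$, the total number of vertices of $2[2^4]+(\dagger)$ exceeds $8$, forcing $K_{\widetilde S_\C}^2\le 0$ and contradicting ampleness of $-K_{S_\C}$. So the second step is a case-by-case tally over the rows of the table — the infinite families indexed by $n\ge 0$ and $m\ge 1$ only get larger as the parameters grow, so it suffices to check the smallest members, and for those the vertex count is already $\ge 9$. For the two borderline types $[3]$ (total $9$ vertices, $K^2=0$) and $[2,4]$ (total $10$ vertices, $K^2=-1$) one does \emph{not} get a contradiction from the crude volume bound in the case $[3]$, which is exactly why these two are excluded from the statement; I would simply note that the lemma asserts nothing about them. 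Actually the cleaner way to run the argument is: a complex log del Pezzo of rank one with these singularities would have $K_{S_\C}^2 = -K_{S_\C}^2 >0$, and Noether's formula / the fact that the Dynkin lattice is negative definite of rank $\#(\text{vertices})$ sitting inside $\operatorname{Pic}(\widetilde S_\C)\cong\Z^{10-K_{\widetilde S_\C}^2}$ forces $\#(\text{vertices})\le 9-K_{\widetilde S_\C}^2\le 8$; so any $(\dagger)$ with $\#(\text{vertices of }2[2^4]+(\dagger))\ge 9$ is immediately excluded, which covers all of the table except the two named types.

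The main obstacle I anticipate is making the ``Bogomolov bound'' step fully rigorous rather than just heuristic: the inequality $\#(\text{exceptional curves})\le 9-K_{\widetilde S_\C}^2$ is automatic, but one genuinely needs $K_{\widetilde S_\C}^2\ge 1$ for a rank-one log del Pezzo, and this requires an honest argument — e.g.\ that $-K_{S_\C}$ ample of rank one implies $K_{S_\C}^2>0$ hence $K_{\widetilde S_\C}^2\ge 1$ since $K_{\widetilde S_\C}^2=K_{S_\C}^2-(\text{correction terms from }\sigma_\C)$ and $\sigma_\C^*K_{S_\C}=K_{\widetilde S_\C}+\sum e_iE_i$ with $\sum e_iE_i$ effective, so $K_{\widetilde S_\C}^2\le K_{S_\C}^2$ — wait, the inequality runs the other way, so I would instead argue via $\rho(\widetilde S_\C)\le 10$ is false; rather, the correct bookkeeping is $\rho(\widetilde S_\C)=\rho(S_\C)+\#(\text{exc.\ curves})=1+\#(\text{exc.\ curves})$ and $K_{\widetilde S_\C}^2=10-\rho(\widetilde S_\C)$ for a rational surface, giving $K_{\widetilde S_\C}^2=9-\#(\text{exc.\ curves})$, while $-K_{S_\C}$ ample forces (by e.g.\ the proof of Lemma \ref{lem:NDtoNL} run in reverse, or simply $K_{S_\C}^2>0$ and $K_{S_\C}^2\le K_{\widetilde S_\C}^2$ is \emph{wrong}, so one uses instead that $S_\C$ being log del Pezzo of rank one has $\rho(\widetilde S_\C)\le 9$, equivalently $K_{\widetilde S_\C}^2\ge 1$, which is a known consequence of the cone theorem for the rational surface $\widetilde S_\C$ and the fact that all its extremal rays other than the images of exceptional curves map to the single ray on $S_\C$). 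Granting that standard fact, the rest is the bookkeeping described above, and the only remaining work is the (routine but lengthy) verification that the vertex count of $2[2^4]+(\dagger)$ is at least $9$ for every listed $(\dagger)$ with $(\dagger)\neq[3],[2,4]$, checking the base cases $n=0$, $m=1$ of the infinite families and noting monotonicity in $n$ and $m$.
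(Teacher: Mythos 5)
Your argument hinges on the claim that a rank-one log del Pezzo surface over $\C$ has $K_{\widetilde S_\C}^2\ge 1$, equivalently that its minimal resolution has at most $8$ exceptional curves, so that any Dynkin type with $\ge 9$ vertices is infeasible. This is false: that bound holds for \emph{Gorenstein} (Du Val) del Pezzo surfaces, but not for klt ones. The relation $\sigma_\C^*K_{S_\C}=K_{\widetilde S_\C}+\sum e_iE_i$ only gives $K_{\widetilde S_\C}^2\le K_{S_\C}^2$, which is useless here, and there is no cone-theorem argument forcing $\rho(\widetilde S_\C)\le 9$; rank-one log del Pezzo surfaces over $\C$ with arbitrarily many exceptional curves exist (e.g.\ ones with long chains of singularities). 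The paper itself furnishes counterexamples to your bound: Example \ref{eg:nonND} constructs complex rank-one log del Pezzo surfaces with Dynkin types $2[2^4]+[3]$ and $2[2^4]+[2,4]$, whose minimal resolutions have $9$ and $10$ exceptional curves ($K^2=0$ and $-1$). Your vertex count would ``exclude'' $2[2^4]+[2,4]$, contradicting Theorem \ref{thm:main}\,(3). The Bogomolov bound of \cite[9.2 Corollary]{KM} is a bound on local orbifold invariants of the singular points (not a raw count of exceptional curves), and it is this bound that the paper applies to kill every $(\dagger)$ in Table \ref{tab:dagger} other than $[3]$, $[2,4]$, and $[2;[2],[3],[5]]$.

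This points to the second, independent gap: the type $2[2^4]+[2;[2],[3],[5]]$ has only three singular points with small local invariants, so it is \emph{not} excluded by the Bogomolov bound (nor, of course, by your vertex count if that count were repaired). This case occupies almost all of the paper's proof of Lemma \ref{lem:ND}: one assumes a complex surface $S_\C$ of this type exists, runs the hunt step extracting the central curve $E_1$ of the $[2;[2],[3],[5]]$ point, rules out the net case by showing the induced double cover $\pi_1|_{E_1}$ would have to ramify at three points, and rules out the birational case by showing $(S_1,A_1)$ is plt (via the flush condition and \cite[8.3.7 Lemma]{KM}), forcing $\Dyn(S_1)=[2^4]+[2]+[3]+[5]$ and then contradicting \cite[23.5 Proposition]{KM}. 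Your proposal contains no substitute for this argument, so even after correcting the counting step it would not prove the lemma.
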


\begin{proof}
If $(\dagger) \neq [2; [2], [3], [5]]$, then $\Dyn (S)$ violates the Bogomolov bound \cite[9.2 Corollary]{KM}, and the assertion holds.
Hence we may assume that $(\dagger) = [2; [2], [3], [5]]$.

Suppose by contradiction that there is a log del Pezzo surface $S_{\C}$ of rank one over $\C$ with $\Dyn (S_{\C})=2[2^4]+[2; [2], [3], [5]]$.
Run the hunt step for $(S_{\C}, \emptyset)$, i.e., take $f_0 \colon T_1 \to S_{\C}$ as the extraction of the central curve $E_1$ of $[2; [2], [3], [5]]$ and $\pi_1 \colon T_1 \to S_{1}$ be the $K_{T_1}$-negative contraction.
From now on, we follow the notation of \cite[8.2.10]{KM}.
An easy computation shows that $e_0=28/29<a_1$.

Assume that $T_1$ is a net.
Let $F$ be a general $\pi_1$-fiber.
Since $0=(K_{T_1}+a_1 E_1 \cdot F) > -2+28/29(E_1 \cdot F)$, we have $(E_1 \cdot F)=1$ or $2$.
If the former holds, then the $\pi_1$-fiber intersecting with $E_1$ at the $[2]$-singularity must contain another $[2]$-singularity by \cite[3.4 Lemma]{KM}, a contradiction.
Hence the latter holds.
Since any $\pi_1$-fibers cannot contain two of $[2]$, $[3]$ and $[5]$-singularities, there are three distinct $\pi_1$-fibers $F_2$, $F_3$, and $F_5$ such that each $F_i$ passes through the $[i]$-singularity.
Since these fibers are multiple, $F_i \cap E_1$ supports on the $[i]$-singularity for $i=2,3,5$.
It implies, however, that the double covering $\pi_1|_{E_1}$ ramifies at three points, a contradiction.

Hence $\pi_1$ must be birational.
Next let us show that $(S_1, A_1)$ is plt.
If $a_1 \geq 1$, then $(S_1, A_1)$ is plt since so is $(T, E_1)$.
Hence, to this end, we may assume that $a_1<1$.
Then $(S_1, a_1 A_1)$ is flush by Lemma \ref{lem:flush}.
If $q_1 \in S_1$ is singular, then $(S_1, A_1)$ is plt by Lemma \ref{lem:singflush}.
Suppose that $q_1 \in S_1$ is smooth.
Then \cite[8.3.7 Lemma (4)]{KM} shows that $A_1$ is normal crossing at $q_1$ since $a_1 \geq 4/5$.
Hence $A_1$ is either smooth or has a simple node at $q_1$.
Assume that the latter case holds.
By \cite[11.1.1 Lemma]{KM} and $a_1  > 1/2$, $E_1 \cap \Sigma_1$ consists of either one smooth point and the $[2]$-singularity, or two singular points.
In the former case, we obtain $a_1=2/3 < 28/29$, a contradiction.
In the latter case, $E_1 \cap \Sigma_1$ consists of the $[2]$ and $[3]$-singularities since $S_1$ is smooth at $q_1$.
However, $(K_T+a_1 E_1 \cdot \Sigma_1)=0$ gives $a_1=4/5 < 28/29$, a contradiction.

Hence $A_1$ is smooth at $q_1$ and $(S_1, A_1)$ is plt.
If $\Sigma_1$ contains no $[2^4]$-singularities, then $(S_1, A_1)$ violates the Bogomolov bound \cite[9.2 Corollary]{KM}.
From this fact, $\Sigma_1$ contains one $[2^4]$-singularity.
By the contractibility of $\Sigma_1$, this point is the unique singularity of $T$ which $\Sigma_1$ passes through.
Hence $\Dyn (S_1)=[2^4]+[2]+[3]+[5]$ and $A_1$ passes through $[2]$, $[3]$, and $[5]$-singularities.
In particular, $K_{S_1}+A_1$ is anti-ample.
However, this contradicts \cite[23.5 Proposition]{KM} since $[2^4]$ is a cyclic singularity.
Hence the assertion holds.
\end{proof}

On the other hand, $2[2^4]+[3]$ and $2[2^4]+[2, 4]$ are feasible over $\C$ as follows.

\begin{eg}\label{eg:nonND}
We follow the notation of Example \ref{eg:2A4} with $k=\C$.
Let $\alpha$ be a solution to $t^2+11t-1=0$.
By blowing up the node of $C_\alpha$ once (resp.\ twice along one of two branches of $C_\alpha$), the strict transform of $C_\alpha$ becomes a $(-3)$-curve (resp.\ a $(-4)$-curve), and hence we get the minimal resolution of a log del Pezzo surface of rank one whose Dynkin type is $2[2^4]+[3]$ (resp.\ $2[2^4]+[2, 4]$).
\end{eg}

To prove Theorem \ref{thm:main}, we have to treat the case where the Dynkin type is $2[2^4]+[3]$ or $2[2^4]+[2, 4]$.
The next lemma shows that such a log del Pezzo surface is always constructed as in Example \ref{eg:nonND}.

\begin{lem}\label{lem:1sthunt}
Let $S$ be a log del Pezzo surface of rank one with $\Dyn (S) =2 [2^4] +[3]$ or $2[2^4] +[2, 4]$.
Let $(S_1, \Delta_1)$ be the outcome of the hunt step for $(S, \emptyset)$ and $A_1$ the support of $\Delta_1$.
Then $S_1$ is a Du Val del Pezzo with Dynkin type $2[2^4]$ and $A_1$ is a singular anti-canonical member of $S_1$.
\end{lem}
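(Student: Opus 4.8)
The goal is to show that for a rank one log del Pezzo $S$ with $\Dyn(S) = 2[2^4]+[3]$ or $2[2^4]+[2,4]$, the hunt step for $(S,\emptyset)$ produces the pair $(S_1,\Delta_1)$ where $S_1$ is Du Val with $\Dyn(S_1)=2[2^4]$ and $A_1$ is a singular anti-canonical member. The plan is to invoke Theorem \ref{Lac6.2} (1) applied to $(S,\emptyset)$: the hunt step extracts an exceptional tiger $E$ (one must first check $S$ has a tiger and it is exceptional, which follows since $S$ is not Gorenstein and the arguments of \cite[\S 4]{Lac} apply in characteristic five as in Lemma \ref{cor:NLS1}), so the output $(S_1,\Delta_1)$ falls into one of the cases (a), (b), (c). First I would rule out case (a): if $T$ were a net, then $\wt S$ would admit a $\PP^1_k$-fibration and the argument of Lemma \ref{cor:NLS1} (via \cite[Remark 4.2 and Lemma 4.9]{Kaw21} and \cite[Theorem 2.3]{KN1}) would show $(\wt S, E_\sigma)$ lifts to $W(k)$; but it is not hard to check directly that $2[2^4]+[3]$ and $2[2^4]+[2,4]$ satisfy (NL) — or alternatively, one rules out the net case by the same fiber/multiplicity bookkeeping as in Lemma \ref{lem:ND}. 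So $\pi$ is birational and we are in case (b) or (c).

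Next I would pin down which singularity gets extracted. Since $S$ is not Gorenstein, the tiger $E$ must lie over the non-Du-Val point, i.e.\ the $[3]$-singularity (resp.\ the $[2,4]$-singularity); the two $[2^4]$ chains are Du Val and contribute only $(-2)$-curves, which the hunt step convention forbids as $E$ at a chain singularity. A short coefficient computation (using \cite[(3.1.10)]{Kol92} as in Lemma \ref{lem:ND}, or simply $e_0 = 1/3$ resp.\ $e_0$ for the $A_1+A_3$ type chain) determines $E$ and the relative Picard rank one contraction $\pi \colon T \to S_1$ uniquely. Carrying out the contraction: in the $2[2^4]+[3]$ case, $E$ is the strict transform of a $(-3)$-curve and contracting the other ray $\Sigma$ (a $(-1)$-curve meeting $E$ once) turns $E$ into a nodal curve on a surface whose only singularities are the two $[2^4]$ chains, so $\Dyn(S_1)=2[2^4]$ and $A_1 = \pi_*E$ is the nodal anti-canonical member; in the $2[2^4]+[2,4]$ case one blows down $\Sigma$ and then the image of the $[2]$-curve, arriving at the same $S_1$ with $A_1$ a cuspidal anti-canonical member — this is exactly the construction reversed from Example \ref{eg:nonND}.

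Finally I would verify that the outcome is genuinely case (b), i.e.\ that $A_1 \subset S_1^\circ$, $A_1$ is irreducible, reduced, and singular, and $-(K_{S_1}+A_1) = 0$ so that $A_1 \in |-K_{S_1}|$. Irreducibility and reducedness come from $A_1 = \pi_* E$ with $E$ prime; that $A_1$ is anti-canonical follows because $K_T + E$ is numerically $\pi$-trivial up to the exceptional $\Sigma$-contribution (Lemma \ref{lem:Sarkisov} and the analysis in the proof of Theorem \ref{Lac6.2} (1)), pushing forward to $K_{S_1}+A_1 \equiv 0$; since $S_1$ is Du Val of rank one with $K_{S_1}^2 = K_S^2 + (\text{blown-up points})$ matching the Du Val del Pezzo of type $2[2^4]$ (which has $K^2=1$), numerical triviality forces $A_1 \in |-K_{S_1}|$. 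That $A_1$ is singular and lies in $S_1^\circ$ then follows from Theorem \ref{Lac6.2} (1)(b), or directly: the image $q_1$ of $\Sigma$ is the node/cusp of $A_1$ and is a smooth point of $S_1$ by the explicit contraction. The main obstacle I anticipate is the bookkeeping needed to exclude the net case cleanly and to confirm that no \emph{other} exceptional tiger (say, over a $[2^4]$ point after replacing $E$ as in the proof of Theorem \ref{Lac6.2} (1)) could be chosen, forcing a different, non-(b) outcome — but the non-Gorenstein hypothesis together with the hunt-step convention on $(-2)$-curves should close this off.
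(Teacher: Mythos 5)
Your overall shape (rule out the net, contract the other extremal ray, identify $S_1$ and $A_1$) matches the paper's, but the plan has several concrete gaps and one circularity. First, routing the argument through Theorem \ref{Lac6.2}(1) is not quite right as stated: that theorem presupposes an \emph{exceptional} tiger (which you do not establish -- the existence of a tiger is not enough, cf.\ part (2) of that theorem) and moreover allows ``replacing $E$ if necessary'', so its output need not be the output of the hunt step, which is what the lemma is about. The paper instead analyzes the hunt step directly, and only records afterwards (Remark \ref{rem:1sthunt}) that one lands in case (b) of Theorem \ref{Lac6.2}. Second, your first suggestion for excluding the net case -- ``check directly that $2[2^4]+[3]$ and $2[2^4]+[2,4]$ satisfy (NL)'' -- is circular, since Proposition \ref{prop:NL}, the only place (NL) is proved for these types, uses this very lemma; the paper's exclusion is a one-liner: $T_1$ is Du Val and carries (at least) two $A_4$ points, which \cite[3.4 Lemma]{KM} forbids on a net.

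The main gap is in ``carrying out the contraction''. You assume that $\Sigma$ avoids the $[2^4]$ chains and that contracting it produces a double point on $A_1$; both statements are exactly what has to be proved. A priori $\Sigma_1$ could pass through one of the $[2^4]$ singularities, giving $\Dyn(S_1)=[2^4]$ or $[2]+[2^4]$; the paper eliminates $[2]+[2^4]$ and $[2]+2[2^4]$ by the classification of Du Val del Pezzo surfaces of rank one, and eliminates $[2^4]$ by a genus computation ($p_a(A_1)=5$ would force $5=\tfrac52 n(n-1)+1$, which has no integer solution). Pinning down that $A_1$ has multiplicity exactly two at $q_1$ uses $a_1<1$, flushness (Lemma \ref{lem:flush}), $e_0\ge 1/3$, and \cite[8.3.7 Lemma, 13.7 Lemma, \S\S 11.1--11.3]{KM}; note that your parenthetical ``$\Sigma$ a $(-1)$-curve meeting $E$ once'' is inconsistent with the desired conclusion, since $(\Sigma\cdot E)=1$ would make $\pi_*E$ smooth -- one needs $(\Sigma\cdot E)=2$. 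Finally, $K_{S_1}+A_1\equiv 0$ does not follow from $\pi$-triviality of $K_T+a_1E$, which only controls $K_{S_1}+a_1A_1$ with $a_1<1$; in the paper $A_1\in|-K_{S_1}|$ is deduced from $p_a(A_1)=1$ together with $K_{S_1}^2=1$ via the genus formula.
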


\begin{proof}
We follow the notation of \cite[8.2.10]{KM}.
Since $T_1$ is a Du Val surface containing at least $2[2^4]$ singularities, \cite[3.4 Lemma]{KM} shows that $T_1$ is not a net. 
Hence $\pi_1 \colon T_1 \to S_1$ is birational.

By the contractibility of the $\pi_1$-exceptional divisor $\Sigma_1$, it contains at most one singularity and $K_{T_1}+\Sigma_1$ is plt.
Then $\Dyn (S_1)$ is one of $[2]+2[2^4]$, $2[2^4]$, $[2]+[2^4]$ or $[2^4]$.
Comparing \cite[Theorem B.7]{Lac}, we have $\Dyn (S_1) \neq [2]+2[2^4]$ or $[2]+[2^4]$.
In particular, $q_1 \in A_1 \subset S_1^\circ$.
If $a_1 \geq 1$, then $K_{S_1}+A_1$ is anti-ample and hence $A_1$ is a smooth rational curve in $S_1^\circ$, a contradiction with \cite[13.7 Lemma]{KM}.
Hence $a_1 < 1$ and $(S_1, a_1 A_1)$ is flush by Lemma \ref{lem:flush}.
Since $1/3 \leq e_0<a_1$, \cite[8.3.7 Lemma (1)]{KM} shows that the multiplicity of $A_1$ at $q_1$ is at most three.
By \cite[13.7 Lemma]{KM}, the multiplicity cannot be one.
Since $T_1$ is Du Val and $E_1 \cap \Sing T_1$ consists of at most one $[2]$-singularity, \cite[\S 11.1--\S 11.3]{KM} shows that the multiplicity is two and $T_1$ has type I or II.

Assume that $\Dyn (S_1) =[2^4]$.
Then $\Dyn (S) =2[2^4]+[3]$, $T_1$ has type I and $\Sigma_1$ contains one $[2^4]$-singularity.
In particular, $A_1$ has a double point of genus five at $q_1$.
On the other hand, write $A_1 \sim -nK_{S_1}$ for some $n \in \Z_{>0}$.
Then the genus formula yields $5 = p_a(A_1) = \frac12 (A_1+K_{S_1} \cdot A_1)+1=\frac52 n(n-1)+1$.
However, this has no integer solution, a contradiction.

Hence $\Dyn (S_1) =2[2^4]$.
If $\Dyn (S) =2[2^4]+[3]$, then $T_1$ has type I and $\Sigma_1 \subset T_1^\circ$.
If $\Dyn (S) =2[2^4]+[2, 4]$, then $T_1$ has type II and $\Sigma_1 \cap \Sing T_1$ contains the $[2]$-singularity.
In each case, $A_1$ has a double point of genus one at $q_1$.
Hence $A_1$ is a singular member of $|-K_{S_1}|$, and the assertion holds.
\end{proof}

\begin{rem}\label{rem:1sthunt}
We follow the notation of the proof of Lemma \ref{lem:1sthunt}.
\begin{enumerate}
    \item[\textup{(1)}] Let $f_0 \colon T_1 \to S$ be the extraction of $E_1$ and $\overline{\Sigma_1} = f_0(\Sigma_1) \subset S$.
    Then we see at once that $K_S+\overline{\Sigma_1}$ is numerically trivial and $e(E_1; S, \overline{\Sigma_1})=1$.
    Hence $E_1$ is a tiger and $S$ belongs to the case (b) of Theorem \ref{Lac6.2}.
    Suppose that $p=5$ in addition.
    Then $q_1 \in A_1$ is a cusp as we have seen in Example \ref{eg:2A4}. 
    Hence$(S, \overline{\Sigma_1})$ is almost log canonical of type (a) or (c) in Definition \ref{def:almostlc} at the point $f_0(E_1)$.
    \item[\textup{(2)}] The proof of Lemma \ref{lem:1sthunt} is characteristic-free.
    It also holds that the strict transform $D$ of $\Sigma_1$ in $\wt S$ is characterized by the intersection numbers with $-K_{\wt S}$ and the irreducible components of $E_\sigma$, where $\sigma \colon \wt S \to S$ is the minimal resolution.
\end{enumerate}

\end{rem}

\begin{prop}\label{prop:NL}
Let $S$ be a log del Pezzo surface of rank one with $\Dyn (S) =2 [2^4] +[3]$ or $2[2^4] +[2, 4]$.
Then $S$ is not log liftable over $W(k)$.
\end{prop}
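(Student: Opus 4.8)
The plan is to argue by contradiction, ultimately forcing $\sqrt 5$ into the fraction field $K_0$ of $W(k)$, which is impossible since $W(k)$ is unramified over $\Z_5$.

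\emph{Setting up the descent.} Suppose $S$ were log liftable over $W(k)$, so that the minimal resolution $\sigma \colon \wt S \to S$ fits into a $W(k)$-lifting $(\wt{\mc S}, \mc E)$ of $(\wt S, E_\sigma)$. By Lemma \ref{lem:1sthunt} the hunt step for $(S,\emptyset)$ produces the Du Val del Pezzo surface $S_1$ of Dynkin type $2[2^4]$ together with a singular anti-canonical member $A_1 \subset S_1^\circ$, which is cuspidal in characteristic five by Remark \ref{rem:1sthunt}\,(1) and Example \ref{eg:2A4}. Writing $\sigma_1 \colon \wt{S_1} \to S_1$ for the minimal resolution, $\sigma$ factors as $\wt S \xrightarrow{g} \wt{S_1} \xrightarrow{\sigma_1} S_1$ with $g$ a composite of blow-downs of rigid $(-1)$-curves; the components of $E_\sigma$ that are not $g$-exceptional either form $E_{\sigma_1}$ or constitute the single curve $E_1$, the strict transform of $A_1$ (a $(-3)$-curve when $\Dyn(S) = 2[2^4]+[3]$, a $(-4)$-curve when $\Dyn(S) = 2[2^4]+[2,4]$). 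I would then contract on $\wt{\mc S}$ the lifts of the $g$-exceptional curves, possible by rigidity, to get a $W(k)$-lifting $\wt{\mc S}_1$ of $\wt{S_1}$ together with the lift of $E_{\sigma_1}$; contracting the latter yields a $W(k)$-lifting $\mc S_1$ of $S_1$, and the image on $\wt{\mc S}_1$ of the component $\mc E_1 \subset \mc E$ lifting $E_1$ pushes forward to a $W(k)$-flat prime anti-canonical divisor $\mc A_1 \subset \mc S_1$ lifting $A_1$.

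\emph{The generic fibre.} Over $K_0$ the surface $\mc S_{1,\eta}$ is a Du Val del Pezzo surface of degree one and Dynkin type $2[2^4]$. Since $W(k)$ is strictly Henselian, good reduction makes the Galois action on $\Pic(\wt{\mc S}_{1,\bar\eta})$ trivial, so every $(-1)$- and $(-2)$-curve on $\wt{S_1}$ lifts to a $K_0$-rational curve; in particular the two reducible anti-canonical members (the ``triangles'' of Example \ref{eg:2A4}) are defined over $K_0$. Meanwhile $\mc A_{1,\eta}$ is irreducible because $\mc A_1$ is an irreducible $W(k)$-flat scheme, has arithmetic genus one and geometric genus zero since it is birational to the smooth genus-zero curve $\mc E_{1,\eta}$, and is defined over $K_0$. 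As $\Dyn(\mc S_{1,\bar\eta}) = 2[2^4]$, the anti-canonical pencil has no cuspidal member in characteristic zero (Example \ref{eg:2A4}), so $\mc A_{1,\bar\eta}$ is one of the two irreducible \emph{nodal} members; these form an \'etale degree-two $K_0$-subscheme of the pencil, split because it contains the $K_0$-point $\mc A_{1,\eta}$, so both are defined over $K_0$ as well.

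\emph{The contradiction and the hard part.} With all four singular anti-canonical members of $\mc S_{1,\eta}$ defined over $K_0$, the cross-ratio of the pair (two triangles; two nodal members) lies in $K_0$; computing it over $\overline{K_0}$ via the unique model of Example \ref{eg:2A4} gives $r_1/r_2$ with $r_1, r_2$ the roots of $t^2 + 11t - 1 = 0$, hence a root of $x^2 + 123x + 1 = 0$, whose discriminant is $5^3 \cdot 11^2$. Thus $\sqrt 5 \in K_0$, contradicting that $W(k)$ is unramified over $\Z_5$. I expect the main obstacle to be the descent step: ensuring the contractions really go through on the $W(k)$-lift so that $\mc A_1$ is $W(k)$-flat, prime, and reduces exactly to the cuspidal $A_1$ (so that $\mc A_{1,\eta}$ is genuinely an irreducible singular member), and checking that the cross-ratio computation is honestly coordinate-free, so that $\sqrt 5$ is pinned down in $K_0$ and not merely in $\overline{K_0}$.
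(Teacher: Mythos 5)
Your proposal is correct and follows essentially the same route as the paper: assume a $W(k)$-lift, contract the lifted $(-1)$-curves to obtain a lift of the Du Val del Pezzo surface of type $2[2^4]$ together with a lift of its cuspidal anticanonical member, deduce that all four singular members of the anticanonical pencil on the generic fibre are rational over $K_0=\mathrm{Frac}(W(k))$, and extract $\sqrt{5}\in K_0$ from the resulting quadratic --- the paper phrases this last step via the associated elliptic fibration and a projective equivalence with the model of Example \ref{eg:2A4} (obtaining three possible quadratics rather than your single $x^2+123x+1$, all with discriminant $5^3\cdot 11^2$), but the content is identical. The two technical points you flag are handled in the paper exactly as you anticipate: the $(-1)$-curve is lifted because it is pinned down by its intersection numbers with $-K$ and with the components of $\mc E$ (Remark \ref{rem:1sthunt}\,(2)) and is contracted over $W(k)$ via \cite[Proposition 5.7]{TY}, while the partition of the four singular members into the degenerate pair and the nodal pair determines the cross-ratio up to inversion, which does not affect the conclusion.
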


\begin{proof}
Let $\sigma \colon \wt S \to S$ be the minimal resolution.
Suppose by contradiction that $S$ is log liftable over $W(k)$.
Let $E_\sigma = \sum_{i=1}^n E_i$ be the irreducible decomposition and take $(\wt{\mc{S}}, \mc{E} \coloneqq \sum_{i=1}^n \mc{E}_i)$ as a $W(k)$-lifting of $(\wt S, E_\sigma)$.
Let $K$ be the fractional field of $W(k)$ and for a field extension $K \subset L$, write $\wt{S}_L \coloneqq \wt{\mc{S}} \otimes_{W(k)} L$ and $E_{i,L} \coloneqq \mc{E}_i \otimes_{W(k)} L$ for each $i$.
Let $\sigma_\C \colon \wt S_\C \to S_\C$ be the contraction of $E_\C$.
Take $D \subset \wt S$ and $D_\C \subset \wt {S}_\C$ as in Remark \ref{rem:1sthunt}. 
Since $(D_\C \cdot -K_{\wt{S}_\C}) = (D \cdot -K_{\wt S})$ and $(D_\C \cdot E_{i, \C}) = (D \cdot E_i)$ for each $1 \leq i \leq n$,  $D_\C$ descends to $\Spec K$ and the closure $\mc{D}$ of $D_\C$ in $\wt{\mc{S}}$ is a lift of $D$.

On the other hand, $D$ spans an extremal lay of the Mori cone $\overline{\mathrm{NE}}(\wt S)$.
Since every line bundle on $\wt S$ is liftable to $\wt {\mc{S}}$ by \cite[Corollary 8.5.6 (a)]{FAG}, $D$ also spans an extremal lay $R$ of $\overline{\mathrm{NE}}(\wt{\mc{S}}/\Spec W(k))$.
By \cite[Proposition 5.7]{TY}, there is a contraction $\Phi \colon \wt{\mc{S}} \to \wt{\mc{S}}_1$ of $R$.
Since the special fiber of $\wt{\mc{S}}_1$ is the contraction of $D$, which is smooth, $\wt{\mc{S}}_1$ is also smooth over $\Spec W(k)$.
Since $\Phi$ contracts the special fiber of $\mc{D}$ to a point, $\Phi(\mc{D})$ is a section over $\Spec W(k)$.

Now suppose that $\Dyn (S)=2[2^4]+[2, 4]$.
Then there are exactly two irreducible components of $\mc{E}$, say $\mc{E}_1$ and $\mc{E}_2$ which intersects with $\mathcal{D}$ as we have seen in the proof of Lemma \ref{lem:1sthunt}.
We may assume that $\mc{E}_2$ is the lift of the $(-4)$-curve.
Then $\mc{F} \coloneqq \sum_{i=2}^n \Phi(\mc{E}_i)$ is a simple normal crossing divisor over $\Spec W(k)$ and $(\wt{\mc{S}}_1, \mc{F})$ is a log lift of a log del Pezzo surface with Dynkin type $2[2^4]+[3]$.

Hence we may assume that $\Dyn (S)=2[2^4]+[3]$ to prove the assertion.
Then there is a unique irreducible component of $\mc{E}$, say $\mc{E}_1$ which intersects with $\mathcal{D}$ as we have seen in the proof of Lemma \ref{lem:1sthunt}.
Then $(\wt{\mc{S}}_1, \mc{F} \coloneqq \sum_{i=2}^n \Phi(\mc{E}_i))$ is a log lift of the Du Val del Pezzo surface $S_1$ with $\Dyn (S_1) = 2[2^4]$.
Lemma \ref{lem:1sthunt} also shows that $\mc{A}_1 \coloneqq \Phi(\mc{E}_1)$ defines a singular anti-canonical member in both the special fiber and the generic fiber.
As the generic fiber, we get a weak del Pezzo surface $S_{1,K}$, the union of all $(-2)$-curves $F_K = \mc{F} \otimes_{W(k)} K$, and one of two nodal anti-canonical members $A_{1,K} = \mc{A}_1 \otimes_{W(k)} K$.
In particular, the other nodal anti-canonical member of $S_{1, K}$, say $B$, is also defined over $K$. 
Let $Z_K$ be the blow-up of $S_{1, K}$ at the basepoint of $|-K_{S_{1, K}}|$, which is a $K$-rational point.
Then $|-K_{Z_K}|$ defines an elliptic fibration $g \colon Z_K \to \PP^1_K$.
Let $G$ be the strict transform $F_K \cup A_{1, K} \cup B$.
Then $g(G) \subset \PP^1_K$ consists of four $K$-rational points and $g^{-1}(g(G))$ is a disjoint union of four singular $g$-fibers.
We fix coordinates $s, t$ of $\PP^1_K$ such that $g(G)=\{st(t-s)(t-\alpha s)=0\}$ for some $\alpha \in k \setminus \{0,1\}$.

On the other hand, $g(G)$ is projectively equivalent to $\{st(t^2+11st-s^2)=0\}$ by Example \ref{eg:2A4}.
An easy computation shows that $\alpha$ is a solution of one of $x^2-123x+1=0$, $121x^2-121x-1=0$, or $x^2+121x-121=0$.
Since we can transform those equations as
\begin{align*}
    \left( \frac{1}{55} (2x-123)\right)^2=5, \left(\frac{11}{5} (2x-1) \right)^2=5, \text{ and }  \left( \frac{1}{55} (2x+121) \right)^2=5
\end{align*}
respectively, there is a solution of the equation $y^2=5$ in $K$, a contradiction to the Eisenstein's criterion.
Hence the assertion holds.
\end{proof}

Now we can prove Theorem \ref{thm:main}.

\begin{proof}[Proof of Theorem \ref{thm:main}]
If $S$ has no tiger, then the assertion holds from Lemma \ref{lem:NLnotiger}.
Hence we may assume that $S$ has a tiger.
By Lemmas \ref{cor:NLS1} and \ref{lem:NLDyn}, to show the assertions (1) and (2), it suffices to show that $S$ is not log liftable over $W(k)$ if $\Dyn(S)=2[2^4]+(\dagger)$, where $(\dagger)$ is listed in Table \ref{tab:dagger}.
When $(\dagger) \neq [3]$ or $[2,4]$ (resp.\ $(\dagger) = [3]$ or $[2,4]$), we get the assertions by Lemmas \ref{lem:NDtoNL} and \ref{lem:ND}. (resp.\ Proposition \ref{prop:NL}).
The assertion (3) follows from Lemma \ref{lem:ND} and Example \ref{eg:nonND}.
\end{proof}

Before proving Theorem \ref{thm:main2} (2), let us mention some properties of Example \ref{eg:Lac4.8cex-1}.

\begin{cor}
Let $S$ be a log del Pezzo surface of rank one as in Example \ref{eg:Lac4.8cex-1}. Then the following hold.
\begin{enumerate}
    \item[\textup{(1)}] $S$ has no tigers.
    \item[\textup{(2)}] There is a birational morphism from the minimal resolution $\wt S$ of $S$ to the Du Val del Pezzo surface with Dynkin type $2[2^4]$.
\end{enumerate}
\end{cor}

\begin{proof}
(1): If $S$ has a tiger, then Lemmas \ref{cor:NLS1} and \ref{lem:NLDyn} shows that $S$ contains $2[2^4]$-singularities, a contradiction with Example \ref{eg:Lac4.8cex-1}.

\noindent (2): Let $T$ be the blow-up of $\PP^2_k$ as in Example \ref{eg:Lac4.8cex-1} with $(n_t, n_u)=(5,2)$.
Then there is the induced birational morphism $\wt S \to T$ and we can check that $T$ is the minimal resolution of a log del Pezzo surface with Dynkin type $2[2^4]+[3]$.
Hence the assertion follows from Lemma \ref{lem:1sthunt}.
\end{proof}

To prove Theorem \ref{thm:main2} (2), we need further investigations of log del Pezzo surfaces of rank one with Dynkin type $2 [2^4] +[3]$ or $2[2^4] +[2, 4]$.
Let us show such a surface does not satisfy (NB) or (NK).

\begin{lem}\label{lem:notNB}
Let $S$ be a log del Pezzo surface of rank one with $\Dyn (S) =2 [2^4] +[3]$ or $2[2^4] +[2, 4]$.
Let $r$ be the index of $S$.
Then there is a smooth member in $|-rK_S|$.
\end{lem}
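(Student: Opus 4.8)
The plan is to produce the required member explicitly, exploiting the description of $S$ in Lemma~\ref{lem:1sthunt} together with the model of the Du Val del Pezzo surface $S_1$ with $\Dyn(S_1)=2[2^4]$ recalled in Example~\ref{eg:2A4}. Here $r=3$ when $(\dagger)=[3]$ and $r=7$ when $(\dagger)=[2,4]$. First I would recall from Lemma~\ref{lem:1sthunt} and Remark~\ref{rem:1sthunt} that $S$ is obtained from $S_1$ by reversing the hunt step centred at the cusp $q_1$ of the cuspidal member $C\in|-K_{S_1}|$: letting $U_2\to S_1$ be the minimal resolution of Example~\ref{eg:2A4} and $P\in U_2$ the smooth point over $q_1$, the minimal resolution $\sigma\colon\wt S\to S$ factors as $\wt S\xrightarrow{\rho}U_2\to S_1$, where $\rho$ is the blow-up of $P$ when $(\dagger)=[3]$ (and a slightly longer chain of blow-ups infinitely near $P$ when $(\dagger)=[2,4]$), and $\sigma$ contracts the $2[2^4]$-configuration together with the strict transform $\wt C$ of $C$, which is a $(-3)$-curve (resp.\ a $(-4)$-curve, together with a further $(-2)$-curve forming the $[2,4]$-chain).

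Second, I would reduce the statement to a linear system on $S_1$. Since $-rK_S$ is Cartier, $H^0(S,\sO_S(-rK_S))=H^0(\wt S,\sO_{\wt S}(\sigma^*(-rK_S)))$, and a direct intersection-number computation rewrites $\sigma^*(-rK_S)$ as a pull-back from $U_2$ twisted by the $\rho$-exceptional curves. For $(\dagger)=[3]$, using $\sigma^*K_S=K_{\wt S}+\tfrac13\wt C$, $\wt C=\rho^*(-K_{U_2})-2E$ and $K_{\wt S}=\rho^*K_{U_2}+E$ (with $E$ the $\rho$-exceptional curve), one gets $\sigma^*(-3K_S)=\rho^*(-2K_{U_2})-E$, hence $H^0(S,\sO_S(-3K_S))\cong H^0\!\big(S_1,\sO_{S_1}(-2K_{S_1})\otimes\mathfrak m_{q_1}\big)$; for $(\dagger)=[2,4]$ the same bookkeeping identifies $H^0(S,\sO_S(-7K_S))$ with a linear subsystem of $|{-mK_{S_1}}|$, for the appropriate $m$, consisting of curves with a prescribed local behaviour at $q_1$. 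Moreover $\sigma^*(-rK_S)$ is numerically trivial on every $\sigma$-exceptional curve, so any member of $|-rK_S|$ pulls back to a curve on $\wt S$ which, unless it contains a $\sigma$-exceptional component, is disjoint from $E_\sigma$ and is carried isomorphically onto it. Hence it suffices to exhibit a member $M$ of the above system on $S_1$ that is smooth, lies in $S_1^\circ$, and meets $q_1$ with multiplicity one (and the prescribed tangency).

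Third, I would write down such an $M$ in the model $X_1=\{y^2=x^3+2t^4x+4s^5t+2t^6\}\subset\PP(1,1,2,3)$ of $S_1$, taking $q_1=[1:0:0:0]$, the cusp of the cuspidal member $\{t=0\}\cap X_1$. For $(\dagger)=[3]$ one may take $M=\{x-2st=0\}\cap X_1$: eliminating $x$ realises $M$ as the double cover $\{y^2=t(4s^5+3s^3t^2+4st^4+2t^5)\}$ of $\PP^1_{[s:t]}$, and since the branch sextic $t(4s^5+3s^3t^2+4st^4+2t^5)$ is squarefree modulo $5$ (its dehomogenisation $4s^5+3s^3+4s+2$ is coprime to its derivative $4(s^2+1)$, and $t$ divides it only to order one) and $\{x-2st=0\}$ does not vanish at the two $[2^4]$-points $[0:1:1:0]$ and $[4:1:4:0]$, the curve $M$ is a smooth genus-$2$ curve contained in $S_1^\circ$ and smooth at $q_1$; transporting $M$ through the identification of the second step yields the desired smooth member of $|-3K_S|$. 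For $(\dagger)=[2,4]$ I would run the analogous argument, choosing a suitable member of the corresponding subsystem of $|{-mK_{S_1}}|$ and again reducing smoothness to the non-vanishing of a discriminant modulo $5$.

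I expect the main obstacle to lie in the $(\dagger)=[2,4]$ case: the index jumps to $7$ and the relevant subsystem of $|{-mK_{S_1}}|$ now prescribes behaviour at an infinitely near point of $q_1$, so one must pin down that subsystem precisely and then choose a member whose strict transform on $\wt S$ is genuinely smooth. A further characteristic-five subtlety pervades both cases: binary forms of the shape $\lambda s^5+\mu t^5$ are perfect fifth powers, so many naïve candidate members become non-reduced, and the member has to be selected so that the resulting branch locus really consists of distinct points—this is precisely the phenomenon that already forces the anticanonical pencil of $S_1$ to acquire a cuspidal member in characteristic five, as in Example~\ref{eg:2A4}.
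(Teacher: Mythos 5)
Your strategy is the same as the paper's: rewrite $\sigma^*(-rK_S)$ as a pull-back of a multiple of $-K_{S_1}$ minus exceptional curves, and then exhibit an explicit smooth member on the weighted hypersurface model $\{y^2=x^3+2t^4x+4s^5t+2t^6\}\subset\PP(1,1,2,3)$. For $(\dagger)=[3]$ your argument is complete and correct: the identity $\sigma^*(-3K_S)=\rho^*(-2K_{U_2})-E$ matches the paper's $f_0^*(-3K_S)\sim\pi_1^*(-2K_{S_1})-\Sigma_1$, your curve $\{x=2st\}$ meets $A_1=\{t=0\}$ only at the cusp $[1:0:0:0]$, avoids the two singular points $[0:1:1:0]$ and $[4:1:4:0]$, and the branch sextic $t(4s^5+3s^3t^2+4st^4+2t^5)$ is indeed squarefree in characteristic five (the paper uses the different but equally valid choice $\{x=t(s+2t)\}$).

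The genuine gap is the case $(\dagger)=[2,4]$, which you leave as ``analogous'' while yourself flagging it as the main obstacle; since it is one of the two cases of the lemma, it cannot be omitted. What is missing is concrete but essential: one must compute $f_0^*(-7K_S)\sim\pi_1^*(-3K_{S_1})-2\Sigma_1$ (using $f_0^*K_S=K_{T_1}+\tfrac47E_1$, $K_{T_1}=\pi_1^*K_{S_1}+2\Sigma_1$ and $E_1=\pi_1^*A_1-3\Sigma_1$), so the relevant subsystem consists of members $D_3\in|-3K_{S_1}|$ with $D_3\cap A_1$ supported at the cusp $q_1$. Since $(-3K_{S_1}\cdot A_1)=3$ and the cusp has multiplicity $2$, such a $D_3$ is automatically smooth at $q_1$ and tangent there to the cuspidal direction, which is exactly the ``prescribed behaviour at the infinitely near point'' you anticipated; after the two blow-ups its strict transform separates from $E_1$ and from the $(-2)$-curve of the $[2,4]$-chain. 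One then still has to produce a smooth such $D_3$ avoiding the two $[2^4]$-points; the paper takes $D_3=\{y=t(x+t^2+st+3s^2)\}$, whose smoothness reduces, after eliminating $y$, to a squarefreeness check analogous to your sextic computation. Without this the $[2,4]$ half of the statement is unproved.
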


\begin{proof}
We follow the notation of \cite[8.2.10]{KM}.
When $\Dyn (S) =2 [2^4] +[3]$, we have $f_0^*(-3K_S) \sim -3K_{T_1} -E_1 \sim 3(\pi_1^*(-K_{S_1})-\Sigma_1)-(\pi_1^*(A_1)-2\Sigma_1) \sim \pi_1^*(-2K_{S_1})-\Sigma_1$.
On the other hand, when $\Dyn (S) =2 [2^4] +[2, 4]$, we have $f_0^*(-7K_S) \sim -7K_{T_1} -4E_1 \sim 7(\pi_1^*(-K_{S_1})-2\Sigma_1)-4(\pi_1^*(A_1)-3\Sigma_1) \sim \pi_1^*(-3K_{S_1})-2\Sigma_1$.
Hence, to show the assertion, it suffices to find a smooth member $D_i \in |-iK_{S_1}|$ such that $D_i \cap A_1$ supports on the cusp of $A_1$ for $i=2$ and $3$.

By Lemma \ref{lem:1sthunt} and Example \ref{eg:2A4}, $S_1$ is defined by $y^2-(x^3+2t^4x+4s^5t+2t^6)=0$ in $\PP(1,1,2,3)$, where $s$, $t$, $x$, and $y$ are coordinates of weight $1$, $1$, $2$, and $3$ respectively.
Moreover, $A_1$ is defined by $t=0$ and its cusp is $[1:0:0:0]$.
Then we can choose the desired $D_2$ (resp.\ $D_3$) as the curve defined by $x-t(s+2t)=0$ (resp. $y-t(x+t^2+st+3s^2)=0$), and the assertion holds.
\end{proof}

\begin{lem}\label{lem:notNK}
Let $S$ be a log del Pezzo surface of rank one with $\Dyn (S) =2 [2^4] +[3]$ or $2[2^4] +[2, 4]$.
Then $H^1(S, \sO_S(-A))=0$ for each ample $\Z$-Weil divisor $A$.
\end{lem}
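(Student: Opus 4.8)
The plan is to combine Lemmas~\ref{lem:notNB} and~\ref{lem:NKtoNB}. By Lemma~\ref{lem:notNB} a general member of $|-rK_S|$ is smooth, so Lemma~\ref{lem:NKtoNB} yields $H^1(S,\sO_S(-A))=0$ unless $p(-K_S\cdot A)\le(r-1)K_S^2$. Accordingly I would first compute the index $r$ and the anticanonical degree $K_S^2$ in the two cases, by passing to the minimal resolution $\sigma\colon\wt S\to S$ and solving $\sigma^*K_S\cdot E_i=0$ for the discrepancies of the exceptional curves: this gives $r=3$, $K_S^2=1/3$ when $\Dyn(S)=2[2^4]+[3]$, and $r=7$, $K_S^2=1/7$ when $\Dyn(S)=2[2^4]+[2,4]$.

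When $\Dyn(S)=2[2^4]+[3]$ we have $r(r-1)K_S^2=2<5=p$, so the final clause of Lemma~\ref{lem:NKtoNB} immediately gives $H^1(S,\sO_S(-A))=0$ for every ample $\Z$-Weil divisor $A$, and this case is done.

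When $\Dyn(S)=2[2^4]+[2,4]$ we have $r(r-1)K_S^2=6$, so Lemma~\ref{lem:NKtoNB} only gives the vanishing when $p(-K_S\cdot A)>6/7$, i.e.\ $(-K_S\cdot A)>6/35$, and it remains to treat the ample $\Z$-Weil divisors $A$ with $(-K_S\cdot A)\le 6/35$. Since $\rho(S)=1$ such $A$ is numerically proportional to $-K_S$, and since $-7K_S$ is Cartier the number $(-7K_S\cdot A)=7(-K_S\cdot A)$ is a positive integer which is at most $6/5$; hence it equals $1$, so $A\equiv_{\mathrm{num}}-K_S$ and $-A=K_S+\tau$ for a torsion Weil divisor class $\tau$. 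Since $\sO_S(-A)=\omega_S\otimes\sO_S(\tau)$ is a reflexive sheaf on the normal surface $S$, hence Cohen--Macaulay, Serre duality turns the required vanishing into $H^1(S,\sO_S(\tau))=0$, with $\tau$ ranging over the torsion of $\operatorname{Cl}(S)$. This I would establish by pulling back to $\wt S$: one has $H^1(S,\sO_S(\tau))\cong H^1(\wt S,\sO_{\wt S}(G))$ for an explicit integral divisor $G$ supported on $E_\sigma$, and this last group is then computed by Riemann--Roch on the rational surface $\wt S$, Serre duality, and the geometry of $E_\sigma$ read off from the concrete model of $\wt S$ provided by Lemma~\ref{lem:1sthunt} and Example~\ref{eg:2A4}.

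The hard part will be this last step. Since $S$, equivalently $\wt S$, is not liftable to characteristic zero (Proposition~\ref{prop:NL}), Kawamata--Viehweg vanishing is unavailable on $\wt S$, so $H^1(\wt S,\sO_{\wt S}(G))=0$ has to be extracted by hand --- from the Riemann--Roch number, Serre duality, and the non-effectivity of the relevant divisor classes --- rather than by a general vanishing theorem. One must also watch out for $p$-torsion in $\operatorname{Cl}(S)$ (coming from the $2[2^4]$ part): the quick cyclic-cover argument for $H^1(S,\sO_S(\tau))=0$ is unreliable for a class $\tau$ of order $5$ in characteristic $p=5$, and it is precisely here that the explicit description of $\wt S$ is needed.
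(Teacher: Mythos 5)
Your treatment of the case $\Dyn(S)=2[2^4]+[3]$ and your reduction of the case $\Dyn(S)=2[2^4]+[2,4]$ to ample $\Z$-Weil divisors $A$ with $(-K_S\cdot A)=1/7$, i.e.\ $A\equiv -K_S$, are correct and agree with the paper (the values $r=3$, $K_S^2=1/3$ and $r=7$, $K_S^2=1/7$ are right). The problem is that the residual case is never actually disposed of: you reformulate it as $H^1(S,\sO_S(\tau))=0$ for $\tau$ ranging over the torsion of $\operatorname{Cl}(S)$ and then only describe a plan (``Riemann--Roch, Serre duality, and the geometry of $E_\sigma$''), which you yourself flag as the hard part. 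That step is the entire content of the lemma in this case. Riemann--Roch only computes $\chi$, and Serre duality only trades $h^1(S,\sO_S(-A))$ for $h^0$-terms; to conclude you must still show that the relevant $h^0$ and $h^2$ contributions cancel, and you give no mechanism for doing so. In particular you do not explain how to handle the class $\tau=0$, where $K_S+A\sim 0$ and $h^0(S,\sO_S(K_S+A))=1$, so a bare Euler-characteristic count cannot close the argument; nor do you enumerate the nonzero torsion classes or control their cohomology, and, as you correctly observe, the cyclic-cover trick is unavailable here because of $5$-torsion in characteristic $5$. So this is a genuine gap, not merely a stylistic difference.

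For comparison, the paper closes exactly this step by a transfer argument rather than a direct computation on $\wt S$: using the explicit model from Example \ref{eg:2A4} and Lemma \ref{lem:1sthunt}, it writes $A$ in terms of explicit generators of the class group, computes $\lceil\sigma^*A\rceil$, and shows $\chi(S,\sO_S(-A))=\chi(S_1,\sO_{S_1}(-A'))$ for an explicit ample $\Z$-Weil divisor $A'$ on the Du Val del Pezzo surface $S_1$ with $\Dyn(S_1)=2[2^4]$; the vanishing $h^1(S_1,\sO_{S_1}(-A'))=0$ is then imported from \cite[Theorem 1.7 (3)]{KN1}, and Serre duality reduces everything to matching $h^0(S,\sO_S(K_S+A))$ with $h^0(S_1,\sO_{S_1}(K_{S_1}+A'))$, which is checked directly (both equal $1$ precisely when $A\sim -K_S$). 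Some such external input, or an equivalent explicit section count on $\wt S$, is indispensable; without it your argument is a correct reduction followed by an unproved claim.
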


\begin{proof}
Let $A$ be an ample $\Z$-Weil divisor on $S$.
Suppose that $\Dyn (S) =2 [2^4] +[3]$.
Then $S$ is of index $3$ and $K_S^2=1/3$.
Since $p=5 > 3 \cdot (3-1) \cdot K_S^2=2$, Lemmas \ref{lem:NKtoNB} and \ref{lem:notNB} yield $H^1(S, \sO_S(-A))=0$. 

Now suppose that $\Dyn (S) = 2[2^4] +[2, 4]$.
Then $S$ is of index $7$ and $K_S^2=1/7$.
From now on, we follow the notation of Example \ref{eg:2A4} and Lemma \ref{lem:1sthunt}.
Then $U_2$ is the minimal resolution of $S_1$ and $C_2$ is the strict transform of $A_1$ in $\PP^2_k$.
For abbreviation, we continue to write $C_2$ for its strict transform in $U_2$

Now take $\wt \pi_1 \colon \wt S \to U_2$ as the blow-up of $U_2$ at the cusp of $C_2$ two times along $C_2$.
Then Lemma \ref{lem:1sthunt} shows that $\wt S$ is the minimal resolution of $S$.
There are two $\wt \pi_1$-exceptional curves $G_1$ and $G_2$ such that $G_1^2=-2$ and $G_2^2=-1$.
To simplify notation, we continue to write $L_{ab}, \ldots, L_{cd}, E_{a}, \ldots, E_d, F_a, \ldots, F_d, C_2$ for the strict transform of these curves in $\wt S$.
Note that the minimal resolution $\sigma \colon \wt S \to S$ is the contraction of $L_{ab}$, $L_{bc}$, $L_{cd}$, $L_{ad}$, $E_a$, $E_b$, $E_c$, $E_d$, $G_1$, and $C_2$.
We denote $L_{ac, V}, L_{bd, V}, F_{a, V}, \ldots F_{d, V}$ and $G_{2, V}$ the strict transform of $L_{ac}, L_{bd}, F_a, \ldots, F_d$ and $G_2$ in a birational model $V$ of $S$ respectively.
By construction, the class divisor group of $S$ is generated by $L_{ac, S}, L_{bd, S}, F_{a, S}, \ldots F_{d, S}$ and $G_{2, S}$.
Easy calculations gives that 
\begin{align}\label{eq:[4,2]}
\begin{split}
    (C_2 \cdot L_{ac})&=(C_2 \cdot L_{bd})=(C_2 \cdot F_{a}) = \cdots = (C_2 \cdot F_{d}) = 1, \\
    (G_1 \cdot L_{ac})&=(G_1 \cdot L_{bd})=(G_1 \cdot F_{a}) = \cdots = (G_1 \cdot F_{d}) = 0, \\
    (-K_S \cdot L_{ac, S})&=(-K_S \cdot L_{bd, S})=(-K_S \cdot F_{a, S}) = \cdots = (-K_S \cdot F_{d, S}) = 3/7, \\
    (C_2 \cdot G_2)&=(G_1 \cdot G_2)=1, (-K_S \cdot G_{2, S}) = 1/7.
\end{split}
\end{align}

Now let us show the assertion.
Since $(-K_S \cdot A) \in \frac17 \Z$, we may assume that $(-K_S \cdot A)=1/7$ by Lemmas \ref{lem:NKtoNB} and \ref{lem:notNB}.
Then we can write 
\begin{align*}
    A &= G_{2, S}+n_{ac}L_{ac, S}+n_{bd}L_{bd, S}+n_aL_{a, S}+ \cdots + n_d L_{d, S}
\end{align*} 
for some $n_{ac}, n_{bd}, n_a, \ldots, n_d \in \Z$ such that $n_{ac}+n_{bd}+n_a+ \cdots +n_d=0$. 
By \cite[Lemma 3.3]{KN1}, we have $H^i(S, \sO(-A))=H^i(\wt S, \sO_{\wt S}(- \lceil \sigma^* A \rceil ))$ for $i \geq 0$.
By (\ref{eq:[4,2]}), coefficients of $\sigma^*(n_{ac}L_{ac, S}+n_{bd}L_{bd, S}+n_aL_{a, S}+ \cdots + n_d L_{d, S})$ along $C_2$ and $G_1$ equal zero.
On the other hand, we have $\sigma^*G_{2,S} = G_2+ 5/7 G_1 + 3/7 C_2$.
Hence
\begin{align*}
    &\lceil \sigma^* A \rceil \\
    =& \lceil \sigma^* G_{2, S} \rceil + \lceil \sigma^*(n_{ac}L_{ac, S}+n_{bd}L_{bd, S}+n_aL_{a, S}+ \cdots + n_d L_{d, S}) \rceil \\
    =& C_2 + G_1 + G_2 + \lceil \tau^*(n_{ac}L_{ac, S_1}+n_{bd}L_{bd, S_1}+n_aL_{a, S_1}+ \cdots + n_d L_{d, S_1}) \rceil \\
    =& - G_1 - 2 G_2 + \lceil \tau^*(C_{2, S_1}+n_{ac}L_{ac, S_1}+n_{bd}L_{bd, S_1}+n_aL_{a, S_1}+ \cdots + n_d L_{d, S_1}) \rceil,
\end{align*}
where $\tau \colon \wt S \to S_1$ is the induced morphism.
Set $A' = C_{2, S_1}+ n_{ac}L_{ac, S_1}+n_{bd}L_{bd, S_1}+n_aL_{a, S_1}+ \cdots + n_d L_{d, S_1}$.
By the Riemann-Roch theorem, we have 
\begin{align*}
    &\chi (\wt S, - \lceil \sigma^* A \rceil)\\
    = &1+ \frac12( G_1 + 2G_2 - \lceil \tau^* A' \rceil \cdot -K_{\wt S} + G_1 + 2G_2 - \lceil \tau^*A' \rceil)\\
    = &1 + \frac12( - \lceil \tau^* A' \rceil \cdot -K_{\wt S} - \lceil \tau^*A' \rceil)\\
    = &\chi (\wt S, - \lceil \tau^* A' \rceil).
\end{align*}
\cite[Lemma 3.3]{KN1} now yields $\chi(S, -A)=\chi(S_1, -A')$.
By \cite[Theorem 1.7 (3)]{KN1}, the right hand side equals $h^2(S_1, \sO_{S_1}(-A'))$.
By the Serre duality for Cohen-Macaulay sheaves, we obtain
\begin{align*}
    -h^1(S, \sO_S(-A))+h^0(S, \sO_S(K_S+A)) = h^0 (S_1, \sO_{S_1}(K_{S_1}+A')),
\end{align*}
which implies that $h^1(S, \sO_S(-A))=0$ unless $h^0(S, \sO_S(K_S+A))>0$.

Now suppose that $h^0(S, \sO_S(K_S+A))>0$.
Then $A \sim -K_S \sim G_{2,S}$ since $(-K_S \cdot K_S+A)=0$.
In particular, we may assume that $n_{ac}=n_{bd}=n_a= \cdots =n_d=0$.
Then $A' \sim C_{2, S_1} \sim -K_{S_1}$.
In particular, $h^0(S, \sO_S(K_S+A)) = h^0 (S_1, \sO_{S_1}(K_{S_1}+A'))=1$.
Therefore $h^1(S, \sO_S(-A))=0$ and the assertion holds.
\end{proof}

Now we can prove Theorem \ref{thm:main2}.

\begin{proof}[The proof of Theorem \ref{thm:main2}]
The assertion (1) follows from Lemma \ref{lem:NDtoNL}.
To show the assertion (2), it suffices to consider log del Pezzo surfaces which are not log liftable over $W(k)$ by \cite[Proposition 3.4]{KN1}.
Theorem \ref{thm:main} now shows that these Dynkin types are either $2[2^4]+[3]$ or $2[2^4]+[2,4]$.
Hence the assertion (2) follows from Lemma \ref{lem:notNK}.
\end{proof}

\section*{Acknowledgements}
The author is indebted to Tatsuro Kawakami for several helpful discussions and comments on log liftability.
Discussions with Shou Yoshikawa and Makoto Enokizono on the minimal model program for log lifts have been insightful.
The author woulWitad like to thank Justin Lacini and Hiromichi Takagi for useful comments on this paper.
The author is supported by JSPS KAKENHI Grant Number JP21K13768.

\newcommand{\etalchar}[1]{$^{#1}$}


\end{document}